\documentclass[12pt]{amsart}
\usepackage{amsmath}
\usepackage{graphicx}
\usepackage{hyperref}
\usepackage[latin1]{inputenc}
\usepackage{mathtools}
\usepackage{amsfonts}
\usepackage{amssymb}
\usepackage[T1]{fontenc}
\usepackage{amsthm}
\usepackage{fullpage}

\newtheorem{theorem}{Theorem}[section]

\newtheorem{lemma}[theorem]{Lemma}
\newtheorem{Conjecture}[theorem]{Conjecture}

\theoremstyle{definition}
\newtheorem{definition}{Definition}[section]
 
\theoremstyle{remark}

\numberwithin{equation}{section}

\newcommand{\F}{\mathbb{F}_q}
\newcommand{\Fm}{\mathbb{F}_{q^m}}
\newcommand{\M}{\mathfrak{M}}
\newcommand{\s}{\mathfrak{S}}
\newcommand{\vt}{\vartheta}
\newcommand{\si}{\sigma}

\title{On the existence of primitive normal  elements of rational form over finite fields of even characteristic}

\keywords{Finite field, Primitive element, Free element, Normal basis, Character}
\subjclass[2010]{12E20, 11T23}

\author{Himangshu Hazarika}
\address{Department of Mathematical Sciences, Tezpur University, Assam, India}
\email{diku\_95@tezu.ernet.in}

\author{Dhiren Kumar Basnet}
\address{Department of Mathematical Sciences, Tezpur University, Assam, India}
\email{dbasnet@tezu.ernet.in}

\author{Giorgos Kapetanakis}
\address{Department of Mathematics and Applied Mathematics, University of
Crete, Voutes Campus, 70013 Heraklion, Greece}
\email{gnkapet@gmail.com}

\thanks{This work was funded by the Council of Scientific and Industrial Research, New Delhi, Government of India's research grant no.~09/796(0099)/2019-EMR-I}


%

\begin{document}
\begin{abstract}
 Let $q$ be an even prime power and $m\geq2$ an integer. By $\mathbb{F}_q$, we denote the finite field of order $q$ and by $\mathbb{F}_{q^m}$ its extension degree $m$. In this paper we investigate the  existence of a primitive normal pair $(\alpha, \, f(\alpha))$, with $f(x)= \frac{ax^2+bx+c}{dx+e} \in \mathbb{F}_{q^m}(x)$ where the rank of the matrix 
 $F= \begin{pmatrix}a \, &b\, & c\\
0\, &d \, &e
\end{pmatrix}$ $\in M_{2 \times 3}(\Fm)  $ is 2. Namely, we establish sufficient conditions to show that nearly all fields of even characteristic possess such elements, except for $\begin{pmatrix} 1 \, &1 \, & 0\\
0\, &1 \, &0
\end{pmatrix}$ if $q=2$ and $m$ is odd, and then we provide an explicit small list of possible and genuine exceptional pairs $(q,m)$.
 \end{abstract}
 
\maketitle

\section{Introduction}
Given an even prime power  $q$  and an integer $m\geq2$, we  denote by $\mathbb{F}_q$, the finite field of order $q$ and by $\mathbb{F}_{q^m}$ its extension field of degree $m$. A generator of the (cyclic)  multiplicative group $\mathbb{F}^*_{q^m}$ is called {\em primitive}. It is well-known that, for any finite field $\F$, there are $\phi(q-1)$ primitive elements, where $\phi$ is Euler's phi-function. Further, an $\mathbb{F}_q$-basis of $\mathbb{F}_{q^m}$ of the form $\{\alpha,\alpha^q, \alpha^{q^2},\ldots, \alpha^{q^{m-1}}\}$ is called a {\em normal basis} and $\alpha$ is called {\em normal} or {\em free}.

 The readers are referred to \cite{10} and the references therein for the existence of both primitive and free elements. The simultaneous occurrence of primitive and free elements in $\mathbb{F}_{q^m}$ is given by the following theorems.
\begin{theorem}[Primitive normal basis theorem, \cite{3}]
For any prime power $q$ and positive integer $m$, the finite field $\mathbb{F}_{q^m}$ contains some element which is simultaneously primitive and free.
\end{theorem}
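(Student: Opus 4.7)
The plan is to follow the character-sum approach due to Lenstra and Schoof. First I introduce characteristic functions for the two properties in question. Writing $\theta(n)=\phi(n)/n$, the expression
\[
\rho(\alpha)=\theta(q^m-1)\sum_{d\mid q^m-1}\frac{\mu(d)}{\phi(d)}\sum_{\mathrm{ord}(\chi)=d}\chi(\alpha),
\]
with the inner sum over multiplicative characters of $\Fm^*$ of exact order $d$, equals $1$ when $\alpha$ is primitive and $0$ otherwise. Freeness is detected dually using the $\F[x]$-module structure on $\Fm$ in which the indeterminate $x$ acts as the Frobenius $\si\colon\beta\mapsto\beta^q$; then $\alpha$ is free exactly when its $\F[x]$-annihilator ideal equals $(x^m-1)$. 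The analogous indicator
\[
\kappa(\alpha)=\Theta(x^m-1)\sum_{g\mid x^m-1}\frac{\mu_q(g)}{\Phi_q(g)}\sum_{\mathrm{Ord}(\psi)=g}\psi(\alpha)
\]
is built from additive characters of $\Fm$ and the polynomial analogues $\Phi_q$, $\mu_q$ of Euler's totient and Möbius functions, with $\Theta$ the corresponding normalisation.

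Next I would consider $N=\sum_{\alpha\in\Fm^*}\rho(\alpha)\kappa(\alpha)$, the number of elements of $\Fm$ that are simultaneously primitive and free. Expanding gives a double sum indexed by pairs $(d,g)$ with $d\mid q^m-1$ squarefree and $g\mid x^m-1$ monic squarefree, whose summands are mixed multiplicative–additive character sums over $\Fm^*$. The pair $(1,1)$ produces the expected main term, of order $q^m$, while every other pair contributes a genuine Gauss-type mixed sum whose modulus is controlled by the Weil bound at the level of $q^{m/2}$. Collecting the estimates yields a sufficient condition of the rough shape
\[
q^{m/2}>W(q^m-1)\cdot W_q(x^m-1),
\]
where $W$ counts squarefree divisors in $\mathbb Z$ and $W_q$ counts monic squarefree divisors in $\F[x]$.

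The main obstacle is to establish this inequality for every pair $(q,m)$. For the large regime I plan to use the elementary estimate $W(n)\ll_\varepsilon n^\varepsilon$ together with a control of $W_q(x^m-1)$ obtained from the cyclotomic factorisation of $x^m-1$ over $\F$, which in turn depends on the multiplicative order of $q$ modulo divisors of $m$; this should verify the bound unconditionally once $q^m$ exceeds an explicit threshold. The finitely many remaining pairs I would attack by Cohen's prime-sieve refinement: replacing the full Möbius inversion by a partial inversion over a carefully chosen set of prime divisors sharpens the inequality enough to dispose of nearly all residual cases. The genuinely small cases are then settled by direct computation inside $\Fm$. Choosing the sieving primes so that every small $(q,m)$ is covered is the delicate step, and in a complete writeup it typically occupies the bulk of the proof.
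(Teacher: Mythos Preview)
The paper does not prove this theorem. It is stated as background (Theorem~1.1) with a citation to Cohen and Huczynska~\cite{3}, and the surrounding text only records that Lenstra and Schoof first proved it and that Cohen and Huczynska later gave a computer-free proof via their sieving technique. There is therefore no ``paper's own proof'' to compare your proposal against.

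That said, your sketch is a faithful outline of the standard argument underlying both of those references: characteristic functions for primitivity and freeness built from multiplicative and additive characters, expansion of the count $N$ into a main term plus mixed Gauss-type sums bounded by Weil, a resulting sufficient inequality of the shape $q^{m/2}>W(q^m-1)W_q(x^m-1)$, and then a split into a large regime handled by divisor-function bounds, a middle regime handled by the Cohen prime sieve, and a finite residual set handled directly. This is exactly the machinery the present paper imports and adapts in Sections~\ref{sec3}--\ref{sec5} for its own (harder) problem, so your proposal is consistent with the methods the paper relies on, even though the paper itself does not reproduce the proof of the cited theorem.
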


At first, this result was proved by Lenstra and Schoof in \cite{2}. Later on, by implementing a sieving technique that was initially introduced by Cohen \cite{16}, Cohen and Huczynska \cite{3} provided a computer-free proof.   
   
\begin{theorem}[Strong primitive normal basis theorem \cite{4}]
In $\mathbb{F}_{q^m}$, there exists some element $\alpha$ such that both $\alpha$ and $\alpha^{-1}$ are primitive and free, unless $(q,m)$ is $ (2,3),(2,4),(3,4),(4,3)$ or $(5,4)$. 
\end{theorem}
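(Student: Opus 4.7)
The plan is to adapt the classical character-sum machinery used by Cohen–Huczynska and Cohen in the proofs of the primitive normal basis theorem and its strong version. I would begin by writing the indicator function for ``$\alpha$ is primitive in $\Fm^*$'' as a M\"obius-weighted sum of multiplicative characters of $\Fm^*$, and the indicator for ``$\alpha$ is $\F$-free'' as an analogous M\"obius-weighted sum over monic $\F[x]$-divisors of $x^m-1$, built from additive characters. Let $N(q,m)$ count the $\alpha \in \Fm^*$ such that both $\alpha$ and $\alpha^{-1}$ are simultaneously primitive and free; writing out the product of the four indicator functions expresses $N(q,m)$ as a quadruple character sum over $\Fm^*$.

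Next, I would expand this quadruple sum term by term. The principal term comes from taking all four characters trivial and contributes a positive multiple of $q^m-1$. Every other term involves a non-trivial character combination and can be bounded using the Weil estimate for character sums attached to the degree-one rational map $\alpha \mapsto \alpha^{-1}$, giving $O(q^{m/2})$ per term. Collecting everything produces an inequality of the shape
$$N(q,m) \;\geq\; \theta(q^m-1)^2\, \theta(x^m-1)^2 \Bigl( q^m - A \cdot W(q^m-1)^2\, W(x^m-1)^2 \cdot q^{m/2} \Bigr),$$
where $W(\cdot)$ counts squarefree divisors and $\theta(n):=\phi(n)/n$, with the analogous definition in the polynomial setting. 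Because $W$ grows subpolynomially, this already forces $N(q,m) > 0$ for all but finitely many pairs $(q,m)$.

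To shrink the residual list, I would invoke Cohen's sieving technique: designate a small set of ``sieving primes'' for $q^m-1$ together with a complementary set for $x^m-1$, replacing the full M\"obius sum by a much shorter one at the cost of a mild multiplicative constant. Tuning the choice of sieving primes trims the exceptional range to a concretely enumerable set of $(q,m)$. Each remaining pair then has to be checked by direct computation in $\Fm$: one searches for a witness $\alpha$ or confirms none exists, verifying in particular that $(2,3),(2,4),(3,4),(4,3),(5,4)$ are genuine exceptions while every other candidate admits a valid $\alpha$.

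The principal obstacle is this final computational stage: balancing the sieve parameters so that the post-sieve bound dispatches all but a truly tractable finite collection of pairs, and then discharging those by a machine search. Pinning the exception set down to exactly five entries requires careful bookkeeping of which divisors of $q^m-1$ and of $x^m-1$ enter the sieve, and is where most of the technical work sits; the analytic side of the argument is, by contrast, essentially a routine application of Weil's theorem once the indicator functions are in place.
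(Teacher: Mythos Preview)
The paper does not itself prove this theorem; it merely states it as background and cites Cohen and Huczynska~\cite{4}, noting only that the proof proceeds ``by using their sieving technique'' after Tian and Qi~\cite{5} had handled $m\geq 32$. There is therefore no in-paper proof to compare against directly.

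That said, your outline is an accurate high-level description of the Cohen--Huczynska method that the paper is invoking: express the four indicator functions (primitive for $\alpha$, primitive for $\alpha^{-1}$, free for $\alpha$, free for $\alpha^{-1}$) via M\"obius-weighted multiplicative and additive character sums, bound the non-principal terms by Weil/Kloosterman-type estimates of size $O(q^{m/2})$, obtain a sufficient condition of the form $q^{m/2} > C\,W(q^m-1)^2\,\Omega(x^m-1)^2$, and then sharpen this with the prime sieve to cut the residual list down to a finite set that is checked directly. This is exactly the template the present paper adapts in Sections~\ref{sec3} and~\ref{sec4} for its own problem, so your sketch is consistent both with the cited source and with the machinery developed later in the paper.

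One small caveat: you describe the final stage as a ``machine search,'' but a central point of \cite{4} is that the sieve is pushed hard enough that the remaining verifications are essentially by hand (the genuine exceptions are confirmed by exhausting the very small fields involved). So the balance of effort in the actual proof is tilted more toward the sieve-tuning than toward computation, whereas your sketch suggests the opposite. This is a matter of emphasis rather than a gap in the argument.
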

Tian and Qi were the first to provide this result in \cite{5}, for $m\geq 32$. Later on Cohen and Huczynska \cite{4} completed the proof up to the above form, again by using their sieving technique. 
 
The next theorems, which extend to rational functions, were given by Kapetanakis \cite{27,8} by employing the aforementioned sieving technique.
  
\begin{theorem}[\cite{27}]
For odd prime power $q\geq 23$, an integer $m\geq 17$ and $A = \begin{pmatrix}a \, &b\\
c \, &d
\end{pmatrix}$ $\in GL_2(\F) $, with the condition that if $A$ has exactly two non-zero entries and $q$ is odd, then the quotient of these entries is a square in $\Fm$. Then there exists some $\alpha\in\Fm$ such that both $\alpha$ and $\frac{a\alpha+b}{c\alpha+d}$ are simultaneously primitive and free.\label{T1}
\end{theorem}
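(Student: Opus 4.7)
My plan is to follow the standard character-sum plus Cohen-sieve approach. First I would introduce indicator functions for the two properties in play. For primitivity, use the classical Möbius-type expression
$$\omega(\alpha) := \theta(q^m-1)\sum_{d\mid q^m-1}\frac{\mu(d)}{\phi(d)}\sum_{\chi_d}\chi_d(\alpha),$$
with $\theta(n):=\phi(n)/n$ and $\chi_d$ running through multiplicative characters of $\Fm^{*}$ of order exactly $d$; this equals $1$ on primitive elements and $0$ otherwise. For freeness, exploit the $\F[x]$-module structure on $\Fm$ in which $x$ acts as Frobenius $\alpha\mapsto\alpha^{q}$: then $\alpha$ is free iff its $\F$-order equals $x^m-1$, and one obtains the analogous
$$\Omega(\alpha):=\Theta(x^m-1)\sum_{D\mid x^m-1}\frac{\mu'(D)}{\Phi(D)}\sum_{\psi_D}\psi_D(\alpha),$$
where $\Phi$ is the polynomial Euler function, $\psi_D$ an additive character of $\F$-order $D$, $\mu'$ the polynomial Möbius function, and $\Theta(D):=\Phi(D)/q^{\deg D}$.

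Next I would form the counting quantity
$$N(q,m,A):=\sum_{\alpha\in\Fm,\;c\alpha+d\neq 0}\omega(\alpha)\,\omega(f(\alpha))\,\Omega(\alpha)\,\Omega(f(\alpha)),$$
where $f(\alpha)=(a\alpha+b)/(c\alpha+d)$, and expand into a quadruple sum whose inner piece is the hybrid character sum
$$S(\chi,\chi',\psi,\psi'):=\sum_{\alpha\in\Fm,\;c\alpha+d\neq 0}\chi(\alpha)\,\chi'(f(\alpha))\,\psi(\alpha)\,\psi'(f(\alpha)).$$
Because $f$ is a Möbius transformation, Weil-type bounds for character sums on rational functions (in the shape used by Fu--Wan and by Cohen--Huczynska) give $|S|\le C\,q^{m/2}$ in the non-degenerate range, with $C$ a small integer depending only on the ramification data of $f$. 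The hypothesis on $A$ is present precisely to tame the degenerate configurations: when $A$ has exactly two non-zero entries, either $b=c=0$, so $f(\alpha)=(a/d)\alpha$, or $a=d=0$, so $f(\alpha)=b/(c\alpha)$; in both cases $\chi'(f(\alpha))$ factors as a constant times $\chi'(\alpha^{\pm 1})$, and the product $\chi\,\chi'^{\pm 1}$ can collapse to a character of unexpectedly low order unless the scalar $a/d$ or $b/c$ is a square in $\Fm$. The squareness assumption kills this unwanted main term so that the Weil bound still governs $S$.

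Applying the triangle inequality to $N(q,m,A)$ then produces a sufficient condition of the rough shape $q^{m/2}>C\cdot W(q^m-1)\cdot W(x^m-1)$, where $W$ counts squarefree divisors on each side. This is not sharp enough for small $m$, so I would invoke Cohen's sieving lemma in its two-sided multiplicative-plus-additive form: pick small subsets $P_1$ of rational primes dividing $q^m-1$ and $P_2$ of irreducible $\F[x]$-factors of $x^m-1$, restrict the divisor sums to these sets, and absorb the remaining divisors by a Bonferroni-type bound. The main obstacle, in my view, is calibrating the sieve against the Weil constant $C$, which deteriorates along the near-degenerate directions even after the squareness hypothesis rules out the worst ones; balancing these two effects is the technical heart of the argument. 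For $q\ge 23$ and $m\ge 17$ the resulting numerical inequality is comfortably verified using standard estimates on $W$ and on the counting function of prime polynomial factors of $x^m-1$, and any residual finite list of pairs $(q,m)$ surviving the analytic bound would be cleared by a direct search in $\Fm$.
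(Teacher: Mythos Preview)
This theorem is quoted from \cite{27} and the present paper does not supply its own proof; it is cited as background and then extended. That said, your outline is essentially the method of \cite{27}, and it is also the template the paper itself follows in Sections~\ref{sec3}--\ref{sec4} for its own results: express the count via the multiplicative/additive character indicators $\rho_e$, $\kappa_g$, expand into hybrid sums $S(\chi,\chi',\psi,\psi')$, bound the non-degenerate sums by Weil/Kloosterman estimates, and finish with the Cohen--Huczynska sieve.

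Two small corrections. First, because four conditions are being imposed (primitivity and freeness of both $\alpha$ and $f(\alpha)$), the crude sufficient condition should carry squares,
\[
q^{m/2} > C\,W(q^m-1)^{2}\,\Omega(x^m-1)^{2},
\]
not the unsquared product you wrote; compare \eqref{cond} in the paper. Second, your diagnosis of the squareness hypothesis is slightly off. In the diagonal case $f(\alpha)=(a/d)\alpha$ the collapse $\chi'=\bar\chi$ happens regardless of $a/d$; the point is that the resulting ``extra'' main term equals $\bar\chi(a/d)\cdot q^m$, and for $\chi$ the quadratic character this is $-q^m$ precisely when $a/d$ is a non-square, annihilating the genuine main term (indeed, if $a/d$ is a non-square and $q$ is odd then $\alpha$ and $(a/d)\alpha$ can never be simultaneously primitive, so the theorem is actually false without the hypothesis). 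The squareness assumption therefore does not prevent the collapse, it forces the collapsed contribution to have the correct sign. With these adjustments your sketch matches the standard argument; for a M\"obius map $f$ the Weil constant $C$ is uniformly small (at most $3$ here), so there is no genuine ``deterioration'' to balance, and the thresholds $q\ge 23$, $m\ge 17$ are set so that no residual computer search is needed.
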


\begin{theorem}[\cite{8}]
Let $q$ be a prime power, $n\geq 2$ an integer and some matrix $M = \begin{pmatrix}a \, &b\\
c \, &d
\end{pmatrix}$ $\in GL_2(\F) $, where $M \neq \begin{pmatrix}1 \, &1\\
0 \, &1
\end{pmatrix}$ if $q=2$ and $m$ is odd. There exists some primitive $\alpha\in \Fm$ such that $\alpha$ and $(a\alpha+b)/(c\alpha+d)$ are both simultaneously normal elements of $\Fm$ over $\F$.

\end{theorem}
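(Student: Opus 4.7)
The plan is to show that the number $N$ of $\alpha\in\Fm$ that are simultaneously primitive, free over $\F$, and have $f(\alpha)=(a\alpha+b)/(c\alpha+d)$ free over $\F$ is strictly positive under the hypotheses. I would first encode these three properties by characteristic functions: primitivity through a sum of multiplicative characters of $\Fm^*$, weighted by the M\"obius function over divisors of $q^m-1$; and freeness over $\F$ through a sum of ``$e$-order'' additive characters, weighted by the polynomial M\"obius function over divisors of $x^m-1$ in $\F[x]$, in the Cohen--Huczynska framework already successfully applied to Theorems 1.1 and 1.3 above.

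Substituting these characteristic functions into the expression for $N$ gives a triple sum indexed by $(d,g_1,g_2)$ with $d\mid q^m-1$ and $g_1,g_2\mid x^m-1$, whose inner terms are character sums of the shape
$$S_{\chi,\psi_1,\psi_2}=\sum_{\substack{\alpha\in\Fm\\ c\alpha+d\neq 0}}\chi(\alpha)\,\psi_1(\alpha)\,\psi_2\bigl(f(\alpha)\bigr).$$
The trivial-character contribution produces the dominant term, of size roughly $\theta(q^m-1)\,\theta(x^m-1)^2\,q^m$, where $\theta$ denotes the relevant density constant. For nontrivial $(\chi,\psi_1,\psi_2)$, the product $\psi_1(\alpha)\psi_2(f(\alpha))$ can be re-written as a single additive character evaluated at a rational function of $\alpha$ of small bounded degree (because $f$ is a M\"obius transformation); coupled with the multiplicative character $\chi(\alpha)$, the Castro--Moreno type hybrid Weil bound yields $|S_{\chi,\psi_1,\psi_2}|\le C\sqrt{q^m}$ with $C$ an explicit small constant.

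Collecting the error terms, I expect a sufficient condition of the shape
$$q^{m/2}>K\cdot W(q^m-1)\,W(x^m-1)^2,$$
with $W$ counting squarefree (polynomial) divisors and $K$ absolute. When this inequality fails, I would invoke the Cohen--Huczynska sieve, which trades a controlled additive loss for a reduction of the $W$-factors by restricting to carefully chosen sieving bases, precisely the mechanism used in the computer-free proofs of the primitive normal basis theorems cited above. Iterating the sieve narrows the set of potential exceptional pairs $(q,m)$ to a finite list, which is then eliminated by direct computation in a computer algebra system.

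The principal obstacle is the sieve calibration: $W(x^m-1)$ fluctuates wildly with $m$, so one must balance the number of sieving primes against the residual error tightly enough to knock out every candidate pair. The lone genuine exception is structural rather than analytic: when $q=2$ and $M=\bigl(\begin{smallmatrix}1&1\\0&1\end{smallmatrix}\bigr)$, we have $f(\alpha)=\alpha+1$, and since any free element $\alpha$ over $\mathbb{F}_2$ has $\operatorname{Tr}_{\Fm/\mathbb{F}_2}(\alpha)=1$, one computes $\operatorname{Tr}(\alpha+1)=1+m$, which vanishes in $\mathbb{F}_2$ precisely when $m$ is odd, forcing $\alpha+1$ to have zero trace and hence not be free.
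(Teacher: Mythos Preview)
The paper does not contain a proof of this statement: it is cited as prior work from \cite{8} (Kapetanakis, \emph{Finite Fields Appl.} 2014) and is listed among the background results in the introduction. Consequently there is no ``paper's own proof'' to compare your proposal against.

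That said, your outline is precisely the standard machinery used both in \cite{8} and in the present paper for its own main results (Theorems~\ref{main1} and~\ref{main2}): express the count via characteristic functions built from multiplicative and additive characters in the Cohen--Huczynska integral notation, isolate the main term, bound the hybrid character sums via Weil-type estimates (Lemmas~\ref{charbound1}--\ref{klo} here), obtain a sufficient inequality of the form $q^{m/2} > K \cdot W(q^m-1)\Omega(x^m-1)^2$, and then apply the prime sieve to handle the residual cases. Your identification of the structural obstruction at $q=2$, $m$ odd, $M=\bigl(\begin{smallmatrix}1&1\\0&1\end{smallmatrix}\bigr)$ via the trace computation is also correct and matches the remark following the proof of the main estimate in Section~\ref{sec3}. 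So as a sketch of the method your proposal is sound; it simply is not something this paper itself carries out for this particular theorem.
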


The existence of a primitive element $\alpha \in \F$ such that $f(\alpha)$ is also primitive for an arbitrary quadratic in $\F[x]$ has been completely resolved in \cite{1}.
\begin{theorem}[\cite{1}]
For all $q>211$, there always exists an element $\alpha\in \mathbb{F}_{q^m}$ such that $\alpha$ and $f(\alpha)$ are both primitive, where $f(x)=ax^2+bx+c$ with $b^2-4ac\neq0$.
\end{theorem}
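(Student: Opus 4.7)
The plan is to apply the classical character-sum approach pioneered by Davenport and Carlitz and developed by Cohen. Set $e = q^m - 1$ and represent the characteristic function of primitivity on $\Fm^*$ as
\[
\omega(\alpha) = \theta(e) \sum_{d \mid e} \frac{\mu(d)}{\phi(d)} \sum_{\chi \text{ of order } d} \chi(\alpha),
\]
where $\theta(e) = \phi(e)/e$ and the inner sum runs over multiplicative characters of $\Fm^*$ of exact order $d$. The quantity of interest is
\[
N = \sum_{\alpha \in \Fm^*,\; f(\alpha)\neq 0} \omega(\alpha)\, \omega(f(\alpha)),
\]
which expands into a double sum over divisors $d_1, d_2 \mid e$ of hybrid character sums of the form $S(\chi_1,\chi_2) = \sum_{\alpha} \chi_1(\alpha)\chi_2(f(\alpha))$. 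Isolating the term $d_1 = d_2 = 1$ produces a main contribution of order $\theta(e)^2 q^m$.

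Next I would bound the remaining character sums via Weil's theorem for rational functions on $\mathbb{P}^1$. The assumption $b^2 - 4ac \neq 0$ is exactly what guarantees that $f$ is separable, so $x^i f(x)^j$ is not an $e$-th power of a rational function whenever $(\chi_1,\chi_2)$ is not the trivial pair. Consequently each nontrivial $S(\chi_1,\chi_2)$ admits a bound of the form $C\sqrt{q^m}$ for an explicit constant $C$ (depending only on $\deg f = 2$), giving the crude estimate
\[
N \geq \theta(e)^2\Bigl(q^m - C\, W(e)^2\sqrt{q^m}\Bigr),
\]
where $W(e)$ is the number of squarefree divisors of $e$. Positivity is guaranteed when $q^{m/2} > C\, W(e)^2$, which already dispatches all sufficiently large $q^m$.

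To push the threshold down to the sharp bound $q > 211$, I would invoke Cohen's prime sieve. Factor $e = e_0\, \ell_1 \cdots \ell_s$, where $e_0$ is a carefully chosen core and the $\ell_i$ are distinct primes dividing $e$ that one wishes to strip off. An inclusion-exclusion argument then replaces $W(e)^2$ in the sufficient inequality by a quantity essentially of the form $W(e_0)^2 / \delta$, where $\delta = 1 - \sum_{i=1}^s 1/\ell_i$. By tuning $e_0$ to collect only a few small primes (so that $\delta$ stays positive and not too small), this sharpens the analytic bound by orders of magnitude and brings all but a finite, explicit list of pairs $(q,m)$ within reach.

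The main obstacle will be the residual small cases: after the analytic step the theorem reduces to a finite but nontrivial list of $(q,m)$ with $q \leq 211$. For each such pair one must either tune a case-specific sieve (perhaps splitting on $m$ or on the factorisation type of $q^m - 1$) or run a direct computational search over all admissible triples $(a,b,c)$ with $b^2 - 4ac \neq 0$ and all primitive $\alpha \in \Fm$. Arranging the sieve so that the leftover list is short enough to exhaust by machine, while carefully maintaining the discriminant hypothesis and excluding only genuine obstructions, is where the bulk of the effort concentrates.
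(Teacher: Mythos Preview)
The paper does not prove this theorem at all; it is quoted in the introduction as a known result from \cite{1} (Booker--Cohen--Sutherland--Trudgian) to motivate the present work, and no argument for it is given here. So there is no ``paper's own proof'' against which to compare your proposal.

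That said, your outline is exactly the approach of the cited reference and of the present paper's main results: express primitivity via the Vinogradov-type characteristic function, bound the nontrivial hybrid sums $\sum_\alpha \chi_1(\alpha)\chi_2(f(\alpha))$ by Weil (using $b^2-4ac\neq 0$ to rule out $x^i f(x)^j$ being a perfect power), obtain a crude inequality of the shape $q^{m/2} > C\,W(q^m-1)^2$, refine it with Cohen's prime sieve, and finish the residual small cases by computation. One minor remark: the constant $C$ coming from Weil here is $2$ (three distinct roots of $xf(x)$ minus one), and your sieve description should have $\delta = 1 - 2\sum_i 1/\ell_i$ rather than $1-\sum_i 1/\ell_i$, since both copies of $\omega$ are being sieved; this factor of $2$ matters when pushing the bound down to $211$.
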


In this paper, we extend of Theorem \ref{T1}. We solve the existence question for elements $\alpha$ of $\Fm$  that both $\alpha$ and $f(\alpha)$  are simultaneously primitive and normal over $\F$, where $f(x)=\frac{ax^2+bx+c}{dx+e} \in \Fm(x)$ such that the $F= \begin{pmatrix}a \, &b\, & c\\
0\, &d \, &e
\end{pmatrix}$ $\in M_{2 \times 3}(\Fm)  $ has rank 2. For $a=0$, the results are already discussed in \cite{10}, hence throughout this paper we assume $a \neq 0$. We call the pair $(q,m)$ a \emph{primitive normal} pair if the field $\Fm$ contains such elements. In particular, we prove the following results, where $m^\prime$ is odd such that $m=2^k m^\prime$, $k\geq 0$. 
\begin{theorem} \label{main1}
For the finite field $\Fm$ of even characteristic, suppose $m^\prime$ is such that $m^\prime|q-1$. Then there exists an element $\alpha$ in $\mathbb{F}_{q^m}$, such that both $\alpha$ and  $f(\alpha)$ are simultaneously primitive normal in $\mathbb{F}_{q^m}$ over $\F$, where $f(x)=\frac{ax^2+bx+c}{dx+e}$, with $a,b,c,d,e\in \mathbb{F}_{q^m}$, $a\neq 0$, and $dx+e\neq 0$ unless $(q,m)$ is one of the pairs $(2,2)$, $(2,4)$, $(2,8)$, $(2,16)$, $(4,2)$, $(4,3)$,  $(4,4)$, $(4,6)$, $(4,8)$, $(4,12)$, $(8,2)$, $(8,4)$, $(8,7)$, $(8,8)$, $(8,14)$, $(16,2)$, $(16,3)$, $(16,4)$, 
$(16,5)$, $(16,6)$, $(16,15)$, $(32,2)$, $(64,2)$,  $(64,4)$, $(128,2)$, $(256,2)$, $(512,2)$ or $(1024,2)$. 
\end{theorem}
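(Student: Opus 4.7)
The plan is to apply the standard character-sum method for primitive/normal element problems, adapted here to the rational function $f(x) = (ax^2+bx+c)/(dx+e)$. First I would write the characteristic function for primitive elements as
\[
\rho(\alpha) = \frac{\phi(q^m-1)}{q^m-1}\sum_{d\mid q^m-1}\frac{\mu(d)}{\phi(d)}\sum_{\chi_d}\chi_d(\alpha),
\]
and an analogous function for $\F$-normal elements, using the $\F[x]$-module structure on $\Fm$ in which the role of $q^m-1$ is played by the polynomial $x^m-1$ and multiplicative characters are replaced by additive characters indexed by divisors of $x^m-1$. The counting function $N_f(q,m)$ for the desired $\alpha$ then expands into a quadruple sum over orders $d_1,d_2\mid q^m-1$ and polynomial orders $g_1,g_2\mid x^m-1$, with inner sums of the form $\sum_{\alpha}\chi_1(\alpha)\chi_2(f(\alpha))\psi_1(u\alpha)\psi_2(v\,f(\alpha))$, where $\alpha$ ranges over $\Fm$ with $d\alpha+e\neq 0$.

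Step two is to bound these inner sums via Weil's theorem: because the numerator of $f$ has degree $2$ and the denominator has degree $1$, the relevant sum admits a bound of the shape $C\cdot q^{m/2}$ with $C$ a small explicit constant that depends only on the number of distinct zeros and poles. Isolating the trivial main term and combining the resulting inequality with the outer weighted sum over characters produces a sufficient criterion of the form $q^{m/2} > C'\cdot W(q^m-1)^2 W_q(x^m-1)^2$, where $W$ and $W_q$ count squarefree (respectively monic squarefree) divisors. To cover more pairs I would then apply the Cohen--Huczynska sieve: factor $q^m-1 = e_0 p_1\cdots p_s$ and $x^m-1 = g_0 h_1\cdots h_t$ into coprime parts, and deduce a weaker criterion involving only $W(e_0)$, $W_q(g_0)$ and a correction factor $\delta = 1 - 2\sum 1/p_i - 2\sum 1/q^{\deg h_j}$, which has to be kept positive.

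The hypothesis $m'\mid q-1$ is crucial because it ensures that $x^{m'}-1$ splits into distinct linear factors over $\F$, and in characteristic $2$ we have $x^m-1 = (x^{m'}-1)^{2^k}$, so the distinct monic irreducible divisors of $x^m-1$ are exactly those $m'$ linear polynomials and therefore $W_q(x^m-1)=2^{m'}$ is completely under control. Combined with standard estimates for $W(q^m-1)$, this reduces the theorem to a finite list of pairs $(q,m)$ for which even the sieved inequality fails; each such pair is either eliminated by choosing a more favourable decomposition of $q^m-1$ into $e_0$ and small primes, or added to the exceptional list after a direct computational check over all admissible matrices $F$ of rank $2$ up to the obvious scaling equivalence.

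The principal obstacle is not any single step but the joint optimisation of the sieve together with the character-sum bound for $f$: the combined character $\chi_2(f(\alpha))\psi_2(v\,f(\alpha))$ decomposes into a product of linear factors raised to multiplicative characters and an exponential of a rational function with a simple pole at $-e/d$, so the effective degree entering Weil's estimate must be computed carefully and may degenerate when $u$, $v$, or the coefficients $a,b,c,d,e$ take special values (for instance when $vf$ becomes an additive-group power or when $\chi_2(f(\alpha))$ reduces to a constant on a hyperplane). Handling these degenerate strata and making the final constant $C'$ sharp enough for the sieved criterion to cut the uncovered region down to exactly the listed pairs is the delicate part; the second nontrivial task is the computer verification, which must distinguish genuinely exceptional $(q,m)$ (for which no primitive normal pair exists for some $F$) from pairs that merely escape the analytic bound.
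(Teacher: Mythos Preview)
Your proposal is correct and follows essentially the same route as the paper: the character-sum expansion, Weil-type bounds yielding a criterion of the form $q^{m/2} > 4W(q^m-1)^2\Omega(x^m-1)^2$, the Cohen--Huczynska sieve refinement with the positivity constraint on $\delta$, and the exploitation of $m'\mid q-1$ to split $x^{m'}-1$ into linear factors all appear in the paper in the same order, followed by a case-by-case analysis over the finitely many values of $m'$. One clarification: the theorem as stated does not require any computational check to certify the listed pairs as genuine exceptions---they are simply the pairs that survive every analytic and sieved bound, so the ``second nontrivial task'' you describe is not part of the proof of this theorem (the paper handles that question separately and only conjecturally).
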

 
\begin{theorem} \label{main2}
Let $\mathbb{F}_{q^m}$ be a finite field of even characteristic and $m'\nmid q-1$. Then there exists an element $\alpha$ in $\mathbb{F}_{q^m}$, such that both $\alpha$ and  $f(\alpha)$ are simultaneously primitive normal in $\mathbb{F}_{q^m}$ over $\F$, where $f(x)=\frac{ax^2+bx+c}{dx+e}$, with $a,b,c,d,e\in \mathbb{F}_{q^m}$, $a\neq 0$, and $dx+e\neq 0$ unless $(q,m)$ is one of the pairs
$(2,3)$, $(2,5)$, $(2,6)$, $(2,7)$, $(2,9)$, $(2,10)$, $(2,11)$, $(2,12)$, $(2,13)$, $(2,14)$, $(2,15)$, $(2,18)$, $(2,20)$, $(2,21)$, $(2,24)$, $(2,30)$, $(4,5)$, $(4,7)$, $(4,9)$, $(4,10)$, $(8,3)$, $(8,5)$, $(8,6)$ or $(32,3)$.
\end{theorem}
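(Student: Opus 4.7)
The plan is to follow the character-sum-plus-sieve paradigm of Lenstra--Schoof and Cohen--Huczynska, already used (I assume) in the preceding sections of the paper to prove Theorem \ref{main1}, and to redeploy it under the modified hypothesis $m'\nmid q-1$. Writing $\alpha$ as primitive iff $(q^m-1)$-free and normal iff $(x^m-1)$-free, I would count
\[
N_f \;=\; \sum_{\alpha\in\Fm,\; d\alpha+e\neq 0} \omega_{q^m-1}(\alpha)\,\Omega_{x^m-1}(\alpha)\,\omega_{q^m-1}(f(\alpha))\,\Omega_{x^m-1}(f(\alpha)),
\]
expand the four characteristic functions into weighted sums over multiplicative and $\F$-linear additive characters, and analyse the resulting quadruple character sum $\sum_\alpha \chi_1(\alpha)\chi_2(f(\alpha))\psi_1(\alpha)\psi_2(f(\alpha))$. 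Since $f$ is a rational function with numerator of degree $2$ and denominator of degree $1$, a Weil-type bound (exactly as used for Theorem \ref{T1}, adjusted for the extra pole) gives $|S|\le c\,q^{m/2}$ with $c$ an explicit small constant depending only on the zero/pole structure of $f$ and the supports of the involved characters.

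Separating the principal term (all four characters trivial) and bounding the remainder by the product of the relevant squarefree-divisor counts $W(q^m-1)$ and $W_q(x^m-1)$, I would distill a sufficient condition of the shape
\[
q^{m/2} \;>\; C\cdot W(q^m-1)^2\,W_q(x^m-1)^2,
\]
for an explicit absolute $C$. Under the hypothesis $m'\nmid q-1$, the factorization $x^m-1=(x^{m'}-1)^{2^k}$ over $\F$ (recall $\mathrm{char}(\F)=2$) has irreducible factors of degree $>1$---precisely the $\F$-orbits of primitive $d$-th roots of unity under Frobenius for $d\mid m'$---and so $W_q(x^m-1)$ is genuinely smaller than the na\"ive bound used in the $m'\mid q-1$ setting. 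This gives a cleaner polynomial-sieve input, but the multiplicative factor $W(q^m-1)$ remains the dominant combinatorial obstruction.

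For the (typically numerous) small pairs where the plain inequality fails, I would invoke Cohen's sieving inequality on both sides: choose a sieving set of primes $p_1,\ldots,p_s$ dividing $q^m-1$ and of irreducible factors $g_1,\ldots,g_t$ of $x^m-1$, and replace the product $W(q^m-1)\cdot W_q(x^m-1)$ in the error term by a quantity of the form $\delta^{-1}\bigl(1+\sum 2/(p_i{-}1)+\sum 2/(q^{\deg g_j}{-}1)\bigr)$ with $\delta>0$ computed from the complementary ``core'' divisors. Tuning the sieving set reduces the exceptional range to a finite explicit list.

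The remaining finite list of candidate $(q,m)$ must then be settled pair by pair, by direct computation in Sage or Magma: for each pair and each rank-$2$ matrix class $F$, search $\alpha\in\Fm$ satisfying all four conditions, producing a witness or certifying the pair as a genuine exception. The main obstacle, as in comparable results, will be the sieving calibration for the awkward pairs where $m'\nmid q-1$ forces a sparser distribution of small irreducible factors of $x^m-1$, so that standard sieving gains little; in particular, the largest exceptional candidates such as $(2,24)$, $(2,30)$ and $(4,10)$ will require either a tailored sieve or a substantial but still tractable direct verification over $q^m$ elements, with careful implementation of primitivity (factoring $q^m-1$) and of the $\F$-order test for normality.
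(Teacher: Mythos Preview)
Your approach---character sums yielding a Weil-type bound, the sufficient condition $q^{m/2} > C\,W(q^m-1)^2\,\Omega(x^m-1)^2$, then the Cohen--Huczynska sieve calibrated case by case in $q$ and $m'$, exploiting that $m'\nmid q-1$ forces irreducible factors of $x^{m'}-1$ of degree ${>}1$---is exactly what the paper does in Sections~\ref{sec3}--\ref{sec5}.

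One clarification, though: the theorem as stated only asserts existence of such $\alpha$ \emph{unless} $(q,m)$ lies in the given list; it does \emph{not} claim those pairs are genuine exceptions. The listed pairs are simply those that survive all sieve refinements, and the paper makes no further claim about them within the proof of Theorem~\ref{main2} (their status is discussed separately in Section~\ref{sec:computations} and left as Conjecture~\ref{conj1}). So your final step of ``producing a witness or certifying the pair as a genuine exception'' by direct search over $\Fm$ is unnecessary for the theorem itself---and indeed would be infeasible for pairs like $(2,30)$ given the $q^{5m}$ quintuples to check.
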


In addition, we employ explicit computational methods and show that the pairs $(2, 2)$, $(2, 3)$, $(2, 4)$, $(2, 5)$, $(2, 6)$, $(4, 2)$ and $(4, 6)$ appearing above are genuine exceptions, while, based on computational evidence, we conjecture, that they are actually the only genuine exceptions, see Conjecture~\ref{conj1}.

This work is heavily influenced by the the work of Lenstra and Schoof~\cite{2}, while character sums plays a very crucial role.
Further, we adjust the sieving technique, provided by Cohen and Huczynska \cite{3, 4}, in our setting.

In Section~\ref{sec3}, we estimate a lower bound for existence of primitive normal pair. Then in Section~\ref{sec4},  by using "the prime sieve technique", we weaken the sufficient condition for more efficient results.  
In Section~\ref{sec5}, we apply the existence conditions on fields of even characteristic for each and every possible case and complete the proofs of Theorems~\ref{main1} and \ref{main2}. In Section~\ref{sec:computations}, we employ computers to further investigate the actual situation with the pairs posing as possible exceptions on Theorems~\ref{main1} and \ref{main2}.
We conclude this work with the statement of two related conjectures in Section~\ref{sec6}.

\section{Preliminaries} 
Under the rule $f\circ \alpha=\overset{n}{\underset{i=1}{\sum}} a_i\alpha^{q^i}$ and 
 $f\,= \overset{n}{\underset{i=1}{\sum}}a_ix^i\thinspace \in \mathbb{F}_{q}[x]$ for $\alpha\in \mathbb{F}_{q^m}$; the additive group of $\mathbb{F}_{q^m}$ is an $\mathbb{F}_{q}[x]$-module.  The $\mathbb{F}_q$-order of $\alpha \in \mathbb{F}_{q^m}$, is the monic $\mathbb{F}_q$-divisor $g$ of $x^m-1$ of minimal degree such that $g\circ \alpha=0$, 
  which we define as \emph{Order of $\alpha$} and denote by $\mathrm{Ord}(\alpha)$. It is clear that the free elements of $\mathbb{F}_{q^m}$ are exactly those of Order $x^m-1$. 
 
  The multiplicative order for $\alpha\in \mathbb{F}^*_{q^m}$ is denoted by $\mathrm{ord}(\alpha)$ and $\alpha$ is primitive if and only if $\mathrm{ord}(\alpha)=q^m-1$. Furthermore, it follows from the definitions that $q^m-1$ and $x^m-1$ can be freely replaced by their radicals $q_0$ and $f_0\,:=\, x^{m_0}-1 $ respectively, where $m_0$ is such that $m=m_0p^a$, where $a$ is a non negative integer and $\gcd(m_0,p)=1$.
  
   Throughout this section we present a couple of functions that characterize primitive and free elements. To represent those functions, the idea of character of finite abelain group is necessary.
 \begin{definition} 
 Let $G$ be a finite abelian group. A character $\chi$ of $G$ is a group homomorphism from $G$ into the group $S^1:= \{z \in \mathbb{C} : |z| = 1\}$. The characters of $G$ form a group under multiplication called the \emph{dual group} or \emph{character group} of $G$, that is denoted by $\widehat{G}$ and is isomorphic to $G$. The character $\chi_0$ defined as $\chi_0(a)=1$ for all $a \in G$ is called the \emph{trivial character} of $G$.
  \end{definition}
 
  In a finite field $\mathbb{F}_{q^m}$, the additive group $\mathbb{F}_{q^m}$ and the multiplicative group $\mathbb{F}^*_{q^m}$ are abelian groups. Throughout this paper we call the characters of the additive group $\mathbb{F}_{q^m}$ \emph{additive characters} and the characters of $\mathbb{F}^*_{q^m}$ \emph{multiplicative characters}. Multiplicative characters are extended from $\mathbb{F}^*_{q^m}$ to $\mathbb{F}_{q^m}$ by the  rule
                          $\chi(0)=\begin{cases}
                         0,& \text{if } \chi\neq\chi_0,\\
                         1,& \text{if } \chi=\chi_0. 
                         \end{cases} $ 
                      Furter, since $\widehat{\mathbb{F}^*_{q^m}} \cong \mathbb{F}^*_{q^m}$, $\widehat{\mathbb{F}^*_{q^m}}$ is cyclic and for any divisor $d$ of $q^m-1$ there are exactly $\phi(d)$ characters of order $d$ in $\widehat{\mathbb{F}^*_{q^m}}$.

   Let $e|q^m-1$, then $\alpha \in \Fm$ is called \emph{$e$-free} if $d|e$ and $\alpha= \beta^d$, for some $\beta\in \Fm$ implies $d=1$. Furthermore $\alpha$ is primitive if and only if $\alpha= \beta^d$, for some $\beta \in \Fm$ and $d|q^m-1$ implies $d =1$.

 For any $e|q^m-1$, following Cohen and Huczynska \cite{3, 4}, we express  the characteristic function for the subset of $e$-free elements of $\mathbb{F}^*_{q^m}$ as follows:
 $$\rho_e: \alpha\mapsto\theta(e)\underset{d|e}{\sum}(\frac{\mu(d)}{\phi(d)}\underset{\chi_d}{\sum}\chi_d(\alpha)),$$
     where $\theta(e):=\frac{\phi(e)}{e}$, $\mu$ is the M\"obius function and $\chi_d$ stands for any multiplicative character of order $d$.
   For any $e|q^m-1$, we use ``integral'' notation due to Cohen and Huczynska \cite{3, 4}, for weighted sums as follows
\begin{align*}
\underset{d|q^m-1}{\int}\chi_d:= \underset{d|q^m-1}{\sum}\frac{\mu(d)}{\phi(d)}\underset{\chi_d}{\sum}\chi_d.
\end{align*}

 Then the  characteristic function for the subset of $e$-free elements of $\mathbb{F}^*_{q^m}$ becomes,
\begin{align*}
\rho_e: \alpha\mapsto\theta(e)\underset{d|e}{\int}\,\chi_d(\alpha).
\end{align*}
    
     Again, for any monic $\mathbb{F}_q$-divisor $g$ of $x^m-1$, a typical additive character $\psi_g$ of \emph{$\mathbb{F}_q$-order} $g$ is one such that $\psi_g \circ g$ is the trivial character of $\mathbb{F}_{q^m}$ and $g$ is of minimal degree satisfying this property. 
    It is well-known that there are $\Phi(g)$ characters $\psi_g$, where $\Phi(g)= (\mathbb{F}_q[x]/g\mathbb{F}_q[x])^*$ is the analogue of the Euler function over $\mathbb{F}_{q}[x]$. 
    
    Then the characteristic function for the set of $g$-free elements in $\mathbb{F}_{q^m}$, for any $g|x^m-1$ is given by
    $$\kappa_g :\alpha\mapsto\Theta(g)\underset{f|g}{\sum}(\frac{\mu^\prime(f)}{\Phi(f)}\underset{\psi_f}{\sum}\psi_f(\alpha)),$$
    where $\Theta(g):= \frac{\Phi(g)}{q^{deg(g)}}$, the  sum runs over all additive characters $\psi_f$ of $\mathbb{F}_q$-order g and $\mu^\prime$ is the analogue of the M\"obius function which is defined as follows: 
      $$\mu^\prime(g)=\begin{cases}
                         (-1)^s , & \text{if $g$ is the product of $s$ distinct irreducible monic polynomials}, \\
                        0 , &\text{otherwise.}\\ 
                         \end{cases} $$
 
      We use the ``integral'' notation for weighted sum of additive characters as follows
\begin{align*}
\underset{f|g}{\int}\psi_f:=\underset{f|g}{\sum}\frac{\mu^\prime(f)}{\Phi(f)}\underset{\psi_f}{\sum}\psi_f.
\end{align*}     
   Then the characteristic function for the set of $g$-free elements in $\mathbb{F}_{q^m}$, for any $g|x^m-1$, is given by
\begin{align*}
\kappa_g :\alpha\mapsto\Theta(g)\underset{f|g}{\int}\,\psi_f(\alpha).
\end{align*}
From \cite{5}, we have the following about the typical additive character.
 Let $\lambda$ be the canonical additive character of $\mathbb{F}_q$. Thus for $\alpha\in \mathbb{F}_q$ this character is defined as  $\lambda(\alpha)= \exp^{2\pi iTr(\alpha)/p}$, where $Tr(\alpha)$ is the absolute trace of $\alpha$ over $\mathbb{F}_p$.

 Now let $\psi_0$ be the canonical additive character of $\mathbb{F}_{q^m}$, which is simply the lift of $\lambda$ to $\mathbb{F}_{q^m}$, i.e., $\psi_0(\alpha)=\lambda(Tr(\alpha)), \, \alpha\in 
\mathbb{F}_{q^m}$. Now for any $\delta\in \mathbb{F}_{q^m}$, let $\psi_\delta$ be the character defined by $\psi_\delta(\alpha)=\psi_0(\delta\alpha), \, \alpha\in \mathbb{F}_{q^m}$.
Define the subset $\Delta_g$ of $\mathbb{F}_{q^m}$ as the set of $\delta$ for which $\psi_\delta$ has $\mathbb{F}_{q}$-order $g$. So we may also write $\psi_{\delta_g}$ for $\psi_\delta$, where $\delta_g\in \Delta_g$. So with the help of this we can express any typical additive character $\psi_g$ in terms of $\psi_{\delta_g}$ and further we can express this in terms of canonical additive character $\psi_0$.  

In the following sections we will encounter various character sums and a calculation, or at least an estimation, for them will be necessary. The following lemmas are well-established and provide such results.

\begin{lemma}[\cite{10}, Theorem 5.4 - Orthogonality relations] \label{charbound0} 
For any nontrivial character $\chi$ of a finite abelian group $G$ and any nontrivial element $\alpha\in G$, the following hold:
$$\underset{\alpha\in G}{\sum}\chi(\alpha)=0 \quad  \mbox{and} \quad \underset{\chi\in \widehat{G}}{\sum}\chi(\alpha)=0.$$ 
\end{lemma}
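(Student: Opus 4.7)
The plan is to prove both orthogonality identities by the same standard shift-invariance trick, applied first to $G$ and then to its dual $\widehat{G}$. The only substantive input needed is that characters of a finite abelian group separate points, which I would invoke through the isomorphism $\widehat{G}\cong G$ already recorded in the paper.

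For the first identity, fix a nontrivial character $\chi$ and set $S=\sum_{\alpha\in G}\chi(\alpha)$. Since $\chi\neq\chi_0$ there exists $\beta\in G$ with $\chi(\beta)\neq 1$. The map $\alpha\mapsto\beta\alpha$ is a bijection of the finite set $G$, and multiplicativity of $\chi$ yields
\[
\chi(\beta)\,S \;=\; \sum_{\alpha\in G}\chi(\beta\alpha) \;=\; \sum_{\alpha'\in G}\chi(\alpha') \;=\; S,
\]
so $(\chi(\beta)-1)S=0$ forces $S=0$.

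For the dual identity, the strategy is identical after swapping the roles of $G$ and $\widehat{G}$. Given a nontrivial $\alpha\in G$, I would first exhibit a character $\psi\in\widehat{G}$ with $\psi(\alpha)\neq 1$, and then, using that $\chi\mapsto\psi\chi$ is a bijection of $\widehat{G}$, conclude
\[
\psi(\alpha)\sum_{\chi\in\widehat{G}}\chi(\alpha) \;=\; \sum_{\chi\in\widehat{G}}(\psi\chi)(\alpha) \;=\; \sum_{\chi'\in\widehat{G}}\chi'(\alpha),
\]
which forces the sum to vanish exactly as before. The main obstacle is producing such a $\psi$. I would handle it by decomposing $G$ as a direct sum of cyclic groups via the structure theorem and defining $\psi$ componentwise: on a cyclic component in which the projection of $\alpha$ has order $d>1$, send a chosen generator to a primitive $d$-th root of unity and extend by $1$ on the other components. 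A slicker but equivalent route is to read the separation property off from $\widehat{G}\cong G$, which forces $|\widehat{\widehat{G}}|=|G|$ and hence makes the natural evaluation map $G\to\widehat{\widehat{G}}$ injective, yielding the required $\psi$ for any nontrivial $\alpha$.

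Beyond these two bijection observations and the separation-of-points input, the argument involves no calculation, so I anticipate no further technical difficulty.
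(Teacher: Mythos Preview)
Your proposal is correct and is precisely the standard shift-invariance argument; note that the paper does not actually prove this lemma but simply quotes it from Lidl--Niederreiter~\cite{10}, where the same proof appears. There is nothing to compare or correct.
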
.

\begin{lemma}[\cite{6}, Corollary 2.3] \label{charbound1}
Take two nontrivial multiplicative characters $\chi_1,\chi_2$ of $\mathbb{F}_{q^m}$. Let $f_1(x)$ and $f_2(x)$ be two monic co-prime polynomials in $\mathbb{F}_{q^m}[x]$, such that none of $f_i(x)$ is of the form $g(x)^{ord(\chi_i)}$ for $i=1,2$; where $g(x)\in \mathbb{F}_{q^m}[x]$ with degree at least 1. Then 
$$\Big|\underset{\alpha\in \mathbb{F}_{q^m}}{\sum}\chi_1(f_1(\alpha))\chi_2(f_2(\alpha))\Big|\leq (n_1+n_2-1)q^{m/2},$$
 where $n_1$ and $n_2$ are the degrees of largest square free divisors of $f_1$ and $f_2$ respectively. 
\end{lemma}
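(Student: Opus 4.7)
The plan is to derive this estimate from Weil's theorem on character sums over curves, which rests on the Riemann hypothesis for smooth projective curves over finite fields, combined with a careful analysis of a suitable Kummer cover of the projective line.

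First, I would set $d_i = \mathrm{ord}(\chi_i)$ for $i=1,2$ and consider the affine curve $C$ over $\Fm$ defined by the pair of equations $y_1^{d_1} = f_1(x)$ and $y_2^{d_2} = f_2(x)$. Using the orthogonality relations of Lemma~\ref{charbound0}, the map $\alpha \mapsto \chi_i(f_i(\alpha))$ can be rewritten, for $\alpha$ not a root of $f_1 f_2$, as a suitable average over the $d_i$ preimages of $\alpha$ on $C \to \mathbb{A}^1_x$. Summing over $\alpha \in \Fm$ then repackages the character sum as a linear combination of point counts $\#C'(\Fm)$, where $C'$ ranges over smooth projective models of the subcovers of $C \to \mathbb{P}^1_x$, together with a bounded correction coming from the zeros of $f_1 f_2$.

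The main obstacle is establishing that the relevant cover is absolutely irreducible. The hypothesis that $f_i(x) \neq g(x)^{d_i}$ for any $g \in \Fm[x]$ ensures that each Kummer extension $\Fm(x, y_i)/\Fm(x)$ is a genuine cyclic extension of degree $d_i$; the coprimality of $f_1$ and $f_2$ makes the branch divisors of the two Kummer extensions disjoint on $\mathbb{P}^1_x$, so they are linearly disjoint over $\Fm(x)$ and their compositum has degree $d_1 d_2$ and remains geometrically irreducible. This is the step I expect to require the most care, since the same absolute irreducibility must be verified for every intermediate subcover that contributes non-trivially to the point-count decomposition.

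Once absolute irreducibility is in hand, a Riemann--Hurwitz computation bounds the genus $g_C$ of the smooth projective model of $C$ by an expression of the shape $2 g_C \leq (n_1 + n_2 - 1) d_1 d_2$, since the ramification of $C \to \mathbb{P}^1_x$ is concentrated over the zeros of the radicals of $f_1$ and $f_2$ (whose degrees are $n_1$ and $n_2$) together with possibly the point at infinity. Invoking Weil's bound $|\#C(\Fm) - (q^m+1)| \leq 2 g_C \cdot q^{m/2}$, and dividing by the $d_1 d_2$ sheets of the cover to isolate the $(\chi_1, \chi_2)$-isotypic contribution, should yield exactly $(n_1+n_2-1)\, q^{m/2}$ as the announced bound.
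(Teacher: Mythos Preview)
The paper does not prove this lemma at all; it is simply quoted from Wan~\cite{6}, Corollary~2.3, so there is no in-paper argument to compare your sketch against. Your outline follows the standard geometric route underlying such estimates, but two of the steps, as written, would not go through.

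First, the hypothesis that $f_i$ is not a $d_i$-th power does \emph{not} force $Y^{d_i}-f_i(x)$ to be irreducible over $\Fm(x)$: irreducibility of $Y^n-a$ requires $a\notin\Fm(x)^{\,p}$ for every prime $p\mid n$, which is strictly stronger when $d_i$ is composite. So your ``genuine cyclic extension of degree $d_i$'' may in fact have degree a proper divisor of $d_i$, and the compositum need not have degree $d_1d_2$. Second, even granting an irreducible cover $C\to\mathbb{P}^1$ of degree $d_1d_2$ with $2g_C\le(n_1+n_2-1)d_1d_2$, Weil's bound $|\#C(\Fm)-(q^m+1)|\le 2g_C\,q^{m/2}$ controls only the \emph{sum} over all $(j_1,j_2)$ of the character sums $\sum_\alpha\chi_1^{j_1}(f_1(\alpha))\chi_2^{j_2}(f_2(\alpha))$; ``dividing by the $d_1d_2$ sheets'' does not rigorously isolate the $(1,1)$-term. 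What is actually needed is the factorisation of the zeta function of $C$ as a product of $L$-functions indexed by characters of the Galois group, together with a separate calculation that $\deg L(\chi_1,\chi_2,T)\le n_1+n_2-1$; the genus bound alone only says the degrees \emph{sum} to $2g_C$.

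A much shorter route, and almost certainly the one in~\cite{6}, avoids the bi-cover entirely. Fix a generator $\chi$ of $\widehat{\mathbb{F}_{q^m}^*}$ and write $\chi_i=\chi^{a_i}$, so that the sum becomes $\sum_\alpha\chi\big(f_1(\alpha)^{a_1}f_2(\alpha)^{a_2}\big)$. A short valuation argument (using that $d_i=(q^m-1)/\gcd(a_i,q^m-1)$) shows that $f_1^{a_1}$ is a $(q^m-1)$-th power in $\overline{\Fm}[x]$ if and only if $f_1$ is a $d_1$-th power; since $f_1,f_2$ are coprime, the product $f_1^{a_1}f_2^{a_2}$ is therefore not a $(q^m-1)$-th power, and the single-character Weil bound (e.g.\ \cite[Theorem~5.41]{10}) applied to a polynomial with $n_1+n_2$ distinct roots gives the stated inequality directly.
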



\begin{lemma}[\cite{7}] \label{charbound2}
Let $\chi$ and $\psi$ be two non-trivial multiplicative and additive characters of the field $\Fm$ respectively. Let $\mathfrak{F, G}$ be rational functions in $\Fm (x)$, where $\mathfrak{F}\neq \beta\mathfrak{H}^n$ and $\mathfrak{G}\neq \mathfrak{H}^p-\mathfrak{H}+\beta$, for any $\mathfrak{H}\in \Fm(x)$ and any $\beta\in \Fm$, and $n$ is the order of $\chi$. 
 Then
 $$\left | \underset{\alpha\in\mathbb{F}_{q^m}\setminus \mathbb{S}}{\sum} \chi(\mathfrak{F}(\alpha))\psi(\mathfrak{G}(\alpha))\right|\leq [deg(\mathfrak{G}_\infty)\, +\,  k_0 \,+\, k_1\,-\,k_2\,-\,2 ]q^{m/2},$$ 
 where $\mathbb{S}$ denotes the set of all poles of $\mathfrak{F}$ and $\mathfrak{G}$, $\mathfrak{G}_\infty$ denotes the pole divisor of $\mathfrak{G}$, $k_0$ denotes  the number of distinct zeroes and poles of $\mathfrak{F}$ in the algebraic closure $\overline{\Fm}$ of $\Fm$, $k_1$ denotes the number of distinct poles of $\mathfrak{G}$ (including infinite pole) and $k_2$ denotes the number of finite poles of $\mathfrak{F}$, that are also zeroes or poles of $\mathfrak{G}$.
\end{lemma}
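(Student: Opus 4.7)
The plan is to derive the bound from the Riemann Hypothesis for curves over $\Fm$, in the spirit of Weil, Bombieri, and Deligne. I would realise the character sum as (essentially) a Frobenius trace on a smooth projective curve $C \to \mathbb{P}^1_{\Fm}$ built from two pieces: a Kummer cover of degree $n=\mathrm{ord}(\chi)$ associated to the rational function $\mathfrak{F}$, and an Artin--Schreier cover of degree $p=\mathrm{char}(\Fm)$ associated to $\mathfrak{G}$. On this fibre product the assignment $\alpha \mapsto \chi(\mathfrak{F}(\alpha))\psi(\mathfrak{G}(\alpha))$ becomes the evaluation of a rank-one $\ell$-adic local system whose $L$-function controls the sum.

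The first step is to confirm that this local system is genuinely nontrivial, so that Weil's theorem applies. The assumption $\mathfrak{F}\neq\beta\mathfrak{H}^n$ is exactly the condition that $\chi\circ\mathfrak{F}$ does not become trivial after pullback along the Kummer cover (i.e.\ $\mathfrak{F}$ is not a constant times an $n$-th power in $\Fm(x)$), and $\mathfrak{G}\neq\mathfrak{H}^p-\mathfrak{H}+\beta$ is the analogous Artin--Schreier nontriviality condition. With nontriviality secured, Weil's bound yields an estimate of the shape $(2g_C-2+N)q^{m/2}$, where $g_C$ is the genus of $C$ and $N$ accounts for the finitely many points that have to be excised because $\mathfrak{F}$ or $\mathfrak{G}$ has a pole or zero there.

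The substantive computation is bounding $g_C$ via Riemann--Hurwitz. The Kummer piece contributes tame ramification at each zero or pole of $\mathfrak{F}$ whose multiplicity is not divisible by $n$; the total of these local contributions is captured by the count $k_0$ of distinct zeros and poles of $\mathfrak{F}$ in $\overline{\Fm}$. The Artin--Schreier piece contributes wild ramification at each pole of $\mathfrak{G}$, finite or infinite; after normalising $\mathfrak{G}$ modulo the image of $\mathfrak{H}\mapsto\mathfrak{H}^p-\mathfrak{H}$ so that no pole order is divisible by $p$, the degree of the local different at a pole of order $d$ is $d+1$, producing a global contribution that matches $\deg(\mathfrak{G}_\infty)+k_1$. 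The correction $-k_2$ then corrects for places that are simultaneously poles of $\mathfrak{F}$ and of $\mathfrak{G}$: at such a shared branch locus the ramification coming from the two covers must be combined rather than added independently, and $k_2$ subtracts the resulting double count.

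I expect the hardest part to be precisely this ramification bookkeeping: combining tame Kummer and wild Artin--Schreier contributions place by place, treating the point at infinity on the same footing as the finite places, and verifying that the Riemann--Hurwitz tally collapses into the clean form $\deg(\mathfrak{G}_\infty)+k_0+k_1-k_2-2$. Once that genus estimate is in hand, the stated inequality follows immediately from Weil's bound, with the passage from the full point count on $C(\Fm)$ to the sum over $\Fm\setminus\mathbb{S}$ absorbed into the combinatorial constants already present.
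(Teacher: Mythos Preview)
The paper does not supply its own proof of this lemma: it is quoted verbatim as Theorem~5.6 of Fu--Wan \cite{7} and used as a black box, so there is no in-paper argument to compare against.

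Your sketch is broadly the correct strategy and is in the same spirit as the original proof in \cite{7}, though Fu and Wan work with $\ell$-adic sheaves on $\mathbb{A}^1$ and the Grothendieck--Ogg--Shafarevich formula rather than explicitly constructing a fibre-product curve and invoking Riemann--Hurwitz. In their framework the sum is the Frobenius trace on $H^1_c$ of the tensor $\mathcal{K}_\chi(\mathfrak{F})\otimes\mathcal{L}_\psi(\mathfrak{G})$ of a Kummer and an Artin--Schreier sheaf; the nondegeneracy hypotheses force $H^0_c$ and $H^2_c$ to vanish, Deligne's purity bounds each Frobenius eigenvalue by $q^{m/2}$, and the Euler characteristic computation via Swan conductors yields exactly the quantity $\deg(\mathfrak{G}_\infty)+k_0+k_1-k_2-2$. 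Your curve-theoretic picture is the geometric shadow of this: the genus computation you describe is equivalent to their Euler-characteristic count, and the $-k_2$ correction arises because at a point that is both a pole of $\mathfrak{F}$ and a pole of $\mathfrak{G}$ the Artin--Schreier Swan conductor already dominates and the tame Kummer break does not add. One caution: the curve you build has degree $np$ over $\mathbb{P}^1$, so extracting the $\chi$-$\psi$-isotypic piece of its point count requires a character decomposition, not the raw Weil bound $|\#C(\Fm)-q^m-1|\le 2g_Cq^{m/2}$; this is why the sheaf-theoretic formulation is cleaner here.
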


\begin{lemma}[\cite{7}] \label{charbound3} 
Let $f_1(x),\, f_2(x),\, \ldots,\, f_s(x)\in \Fm[x]$ be distinct irreducible polynomials. Let $\chi_1,\, \chi_2,\, \ldots, \, \chi_s$ be multiplicative characters and $\psi$ be a non trivial additive character of  $\Fm$, then
$$ \left| \underset{y\in \Fm \\ f_t(y)\neq 0}{\sum} \chi_1(f_1(y))\chi_2(f_2(y))\ldots \chi_s(f_s(y))\psi(y)\right| \leq kq^{m/2},   $$
where $k=\overset{s}{\underset{i=1}{\sum}}deg (f_i)$.
\end{lemma}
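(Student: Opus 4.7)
The statement is a classical Weil--type bound for a hybrid character sum, and the plan is to reduce it to Lemma~\ref{charbound2} by packaging all the multiplicative characters into a single one.

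\emph{Step 1 (trivial characters).} If some $\chi_j$ is trivial, then $\chi_j(f_j(y))=1$ whenever $f_j(y)\neq 0$, so the factor $\chi_j(f_j(x))$ may be dropped entirely from the product; this only renders the ensuing bound slightly slack, since $\deg(f_j)$ is then removed from the effective value of $k$. Without loss of generality, I therefore assume that every $\chi_i$ is nontrivial.

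\emph{Step 2 (packaging).} Since $\widehat{\Fm^*}$ is cyclic of order $N:=q^m-1$, fix a generator $\chi$ and write $\chi_i=\chi^{a_i}$ with $1\leq a_i<N$. Set
\[
\mathfrak{F}(x):=\prod_{i=1}^{s} f_i(x)^{a_i},\qquad \mathfrak{G}(x):=x.
\]
Using the convention $\chi(0)=0$, one has $\prod_i\chi_i(f_i(y))=\chi(\mathfrak{F}(y))$ for every $y\in\Fm$ with all $f_i(y)\neq 0$, and both sides vanish otherwise. Hence the sum in question equals $\sum_{y\in\Fm\setminus\mathbb{S}}\chi(\mathfrak{F}(y))\psi(\mathfrak{G}(y))$, where $\mathbb{S}$ is the (finite) union of the zero loci of the $f_i$.

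\emph{Step 3 (applying Lemma~\ref{charbound2}).} The hypotheses are met: since the $f_i$ are distinct irreducibles in $\Fm[x]$ and each $a_i\not\equiv 0\pmod{N}$, $\mathfrak{F}$ is not of the form $\beta\mathfrak{H}^N$; and $\mathfrak{G}(x)=x$ is patently not of Artin--Schreier type $\mathfrak{H}^p-\mathfrak{H}+\beta$. Now count parameters: $\deg(\mathfrak{G}_\infty)=1$, $k_1=1$, $k_2=0$ (since the polynomial $\mathfrak{F}$ has no finite poles), and the distinct zeros of $\mathfrak{F}$ in $\overline{\Fm}$ are the $\sum_i\deg(f_i)=k$ roots of the $f_i$, to which one adjoins the unique pole at infinity, giving $k_0\leq k+1$. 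Substituting yields
\[
\left|\sum_{y\in\Fm\setminus\mathbb{S}}\chi(\mathfrak{F}(y))\psi(\mathfrak{G}(y))\right|\leq\bigl[1+(k+1)+1-0-2\bigr]q^{m/2}=k\,q^{m/2}.
\]

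The principal obstacle is essentially bookkeeping rather than analytic: one must verify that the encoding $\chi_i=\chi^{a_i}$ does not surreptitiously turn $\mathfrak{F}$ into a perfect $N$-th power, which is precisely where distinctness and irreducibility of the $f_i$ are indispensable. A mild alternative strategy, avoiding Lemma~\ref{charbound2}, is to invoke directly Weil's theorem on curves: the sum is controlled by $(2g+c)q^{m/2}$ on the Kummer--Artin--Schreier cover of $\mathbb{P}^1$ defined by $w^N=\mathfrak{F}(x)$ and $z^p-z=x$, and a Riemann--Hurwitz computation of the genus recovers the same bound $k\,q^{m/2}$.
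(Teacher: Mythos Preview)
The paper does not actually prove this lemma: it is quoted without proof from reference~\cite{7} (Fu--Wan), so there is no in-paper argument to compare against. Your reduction to Lemma~\ref{charbound2} is the natural way to extract this special case from the general bound, and the overall strategy is sound.

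There is, however, a small bookkeeping slip in Step~3. By the wording of Lemma~\ref{charbound2}, $k_0$ counts the distinct zeroes and poles of $\mathfrak{F}$ \emph{in $\overline{\Fm}$}, i.e.\ the affine ones only; since $\mathfrak{F}=\prod f_i^{a_i}$ is a polynomial, it has no finite poles and exactly $k$ distinct finite zeros, so $k_0=k$, not $k+1$. Your inclusion of the pole at infinity is unwarranted, and the displayed arithmetic $1+(k{+}1)+1-0-2=k$ is in any case false (it equals $k+1$). With the correct value $k_0=k$ one gets
\[
\bigl[\deg(\mathfrak{G}_\infty)+k_0+k_1-k_2-2\bigr]\,q^{m/2}=\bigl[1+k+1-0-2\bigr]\,q^{m/2}=k\,q^{m/2},
\]
which is the desired bound. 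So the approach is right; just fix the count of $k_0$ and the arithmetic.
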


\begin{definition}
For a non-trivial additive character $\psi$ of  the finite field $\Fm$, the sum
$$K(\psi; a, b):= \underset{\alpha\in\Fm^*}{\sum}\psi(a\alpha+b\alpha^{-1}),$$
where $a, b\in \Fm$ is called a \emph{Kloosterman sum}.

\end{definition}

\begin{lemma}[\cite{12}, Theorem 5.45] \label{klo}
If the finite field $\Fm$  has a non-trivial additive character $\psi$ and $a, b\in \Fm$ are not both zero, then the Kloosterman sum satisfies 
$$ \left| K (\psi; a, b) \right| \leq 2 q^{m/2}.$$

\end{lemma}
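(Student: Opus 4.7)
The plan is to deduce the bound from the Hasse--Weil theorem applied to an Artin--Schreier cover. First I dispense with the degenerate cases. If $a = 0$ and $b \neq 0$ (the case $b = 0$, $a \neq 0$ being symmetric via $\alpha \leftrightarrow \alpha^{-1}$), reindexing $\beta = b\alpha^{-1}$ and invoking Lemma~\ref{charbound0} yield $K(\psi; 0, b) = \sum_{\beta \in \Fm^*}\psi(\beta) = -1$, comfortably within the claimed bound.

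For the main case $a, b \in \Fm^*$, write $\psi(z) = \psi_0(\gamma z)$ for the canonical additive character $\psi_0$ and some $\gamma \in \Fm^*$; absorbing $\gamma$ into $a, b$, I may assume $\psi = \psi_0$. Consider the affine curve $C : y^p - y = a x + b x^{-1}$ over $\Fm$, where $p$ denotes the characteristic. Using the orthogonality identity
$$\#\{y \in \Fm : y^p - y = z\} \;=\; \sum_{t \in \mathbb{F}_p} \psi_0(tz),$$
and summing over $x = \alpha \in \Fm^*$, I obtain
$$\#\{(x, y) \in C(\Fm) : x \neq 0\} \;=\; (q^m - 1) \;+\; \sum_{t \in \mathbb{F}_p^*} K(\psi_0; ta, tb),$$
which in characteristic two reduces to $(q^m - 1) + K(\psi_0; a, b)$. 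A Riemann--Hurwitz computation for the Artin--Schreier cover of $\mathbb{P}^1$ with two unit-order poles at $x = 0$ and $x = \infty$ gives genus $g = p - 1 = 1$; after accounting for the two totally ramified points on the smooth projective model, the Hasse--Weil inequality bounds the deviation of the point count from $q^m + 1$ by $2 g q^{m/2}$, which translates directly into $|K(\psi; a, b)| \leq 2 q^{m/2}$.

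The main obstacle is the genus computation, which rests on the Artin--Schreier conductor formula $g = \tfrac{p-1}{2}(d_1 + d_2 + r - 2)$ for poles of orders $d_i$ coprime to $p$, applied here with $d_1 = d_2 = 1$ and $r = 2$. An alternative, characteristic-free route appeals directly to Weil's theorem on multiplicative exponential sums, or to the elementary Stepanov--Bombieri method; each recovers the same bound and sidesteps the explicit conductor bookkeeping. Since the remainder of the paper works exclusively in characteristic two, the Artin--Schreier argument above is both the most transparent and the most economical.
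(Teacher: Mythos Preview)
The paper does not prove this lemma at all; it is quoted as a known result (the label ``Theorem~5.45'' points to the textbook of Lidl and Niederreiter, although the bibliographic key \texttt{12} in the paper actually refers to Carlitz --- a harmless citation slip). So there is no ``paper's own proof'' to compare against.

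Your Artin--Schreier argument is correct in characteristic two, which is all the paper needs. Two small points are worth making explicit. First, invoking Hasse--Weil requires the curve $y^p-y=ax+bx^{-1}$ to be geometrically irreducible; this holds because $ax+bx^{-1}$ has simple poles at $0$ and $\infty$, whereas any expression $h^p-h+c$ with $h\in\overline{\Fm}(x)$ nonconstant would have pole orders divisible by $p$. Second, in odd characteristic your point-count identity gives only
\[
\Bigl|\sum_{t\in\mathbb{F}_p^*}K(\psi_0;ta,tb)\Bigr|\le 2(p-1)q^{m/2},
\]
which does not immediately bound each individual Kloosterman sum; one then needs an $L$-function factorisation or Weil's original splitting argument to separate the summands. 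For $p=2$ the sum collapses to a single term and the issue disappears, so your proof is complete for the setting of the paper. Your closing remark that ``Weil's theorem on multiplicative exponential sums'' gives an alternative is slightly misphrased --- Kloosterman sums involve only additive characters --- but Weil's general bound for curves (or the Stepanov--Bombieri method you also mention) does indeed cover this case.
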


\section{A lower bound for $\mathfrak{M}(e_1,e_2,g_1,g_2)$} \label{sec3}

  In this section, we try to estimate the number of the elements $\alpha\in \Fm$ such that both  $\alpha$ and $f(\alpha)$ are simultaneously primitive normal elements in $\mathbb{F}_{q^m}$ over $\F$. We consider $q$ an even prime power, i.e. $q=2^k$, where $k$ is a positive integer. Take $e_1,e_2$ such that $e_1, e_2|q^m-1$ and $g_1,g_2$ such that $g_1,g_2|x^m-1$.
Let $\mathfrak{M}(e_1,e_2,g_1,g_2)$ be the number of $\alpha\in\mathbb{F}_{q^m}$, such that $\alpha$ is both $e_1$-free and $g_1$-free and $f(\alpha)$ is  $e_2$-free, $g_2$-free; where $f(x)=\frac{ax^2+bx+c}{dx+e}$ and the matrix $M = \begin{pmatrix}a \, &b, &c\\
0 \, &d\, &e
\end{pmatrix}$ $\in M_{2\times 3}(\Fm)$ is of rank 2. In particular, for our purposes, it suffices to prove that $\mathfrak{M}(e_1,e_2,g_1,g_2)>0$.

For convenience, we use the notations $\omega(n)$ and $g_d$ to denote number of prime divisors of $n$ and the number of monic irreducible factors of $g$ over $\mathbb{F}_{q}$ respectively. Furthermore, we write $W(n):=2^{\omega(n)}$  and $\Omega(g):=2^{g_d}$.

\begin{theorem}
Let $f(x)=\frac{ax^2+bx+c}{dx+e}\in \Fm(x)$ such that the matrix $M = \begin{pmatrix}a \, &b \, &c\\
0 \, &d\, &e
\end{pmatrix}$ $\in M_{2\times 3}(\Fm)$ is of rank 2 and $f(x)\neq yx, \, yx^2$ for any $y\in \Fm$.  Suppose $e_1, e_2$ divide $q^m-1$ and $g_1,g_2$ divide $x^m-1$. If  $M \neq \begin{pmatrix}1 \, &1\,  &0\\
0 \, &1\, &0
\end{pmatrix}$ when $q=2$, $m$ is odd and 
\begin{equation}\label{cond1}
q^{\frac{m}{2}} > 4W(e_1)W(e_2)\Omega(g_1)\Omega(g_2),
\end{equation}
then $\M(e_1,e_2,g_1,g_2) > 0$.

In particular, if
\begin{equation} \label{cond}
q^{m/2}>4W(q^m-1)^2\Omega(x^m-1)^2,
\end{equation}
then $\mathfrak{M}(q^m-1,q^m-1,x^m-1,x^m-1)>0$.
\end{theorem}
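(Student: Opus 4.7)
The plan is to apply the character-sum machinery of Lenstra--Schoof and Cohen--Huczynska. First, I would substitute the characteristic functions $\rho_{e_1},\rho_{e_2},\kappa_{g_1},\kappa_{g_2}$ into the defining expression for $\M$, so that
\[
\M(e_1,e_2,g_1,g_2) = \theta(e_1)\theta(e_2)\Theta(g_1)\Theta(g_2) \underset{d_1|e_1}{\int}\underset{d_2|e_2}{\int}\underset{h_1|g_1}{\int}\underset{h_2|g_2}{\int} S(\chi_{d_1},\chi_{d_2},\psi_{h_1},\psi_{h_2}),
\]
where the inner character sum
\[
S(\chi_{d_1},\chi_{d_2},\psi_{h_1},\psi_{h_2}) := \underset{\substack{\alpha\in\Fm\\ d\alpha+e\neq 0}}{\sum} \chi_{d_1}(\alpha)\,\chi_{d_2}(f(\alpha))\,\psi_{h_1}(\alpha)\,\psi_{h_2}(f(\alpha))
\]
is obtained after interchanging sums. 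Writing $\psi_{h_i}(\cdot)=\psi_0(\delta_i\,\cdot)$ with $\delta_i\in\Delta_{h_i}$ puts $S$ into the shape covered by Lemma~\ref{charbound2}, with $\mathfrak{F}(x)=x^{r_1}f(x)^{r_2}$ for suitable exponents and $\mathfrak{G}(x)=\delta_1 x + \delta_2 f(x)$.

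Second, I would isolate the principal term corresponding to $d_1=d_2=1$ and $h_1=h_2=1$, in which $S$ equals $q^m$ minus the contribution of the excluded pole $\alpha=-e/d$; this term contributes essentially $\theta(e_1)\theta(e_2)\Theta(g_1)\Theta(g_2)\,q^m$. Then I would bound each remaining $S$ by appealing to Lemmas~\ref{charbound1}, \ref{charbound2}, and \ref{charbound3} according to which of $\chi_{d_1},\chi_{d_2},\psi_{h_1},\psi_{h_2}$ are trivial. Since $f$ is a quadratic over a linear polynomial, the degree of the pole divisor of $\mathfrak{G}$ and the numbers $k_0, k_1, k_2$ in Lemma~\ref{charbound2} are all bounded by small absolute constants, yielding a bound of the form $|S|\leq C\,q^{m/2}$ with $C$ a small explicit constant.

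Third, after assembling the pieces and using that the weighted integrals over $d_i$ and $h_j$ produce at most $W(e_1)W(e_2)\Omega(g_1)\Omega(g_2)-1$ non-trivial quadruples (each of absolute character weight one because the factors $\mu/\phi$ and $\mu'/\Phi$ exactly cancel the multiplicity of characters of a given order), I arrive at the uniform lower bound
\[
\M(e_1,e_2,g_1,g_2) \geq \theta(e_1)\theta(e_2)\Theta(g_1)\Theta(g_2) \Bigl(q^m - 4\,q^{m/2}\,W(e_1)W(e_2)\Omega(g_1)\Omega(g_2)\Bigr).
\]
The assumed inequality \eqref{cond1} makes the bracket positive, proving $\M>0$; the particular case \eqref{cond} is then the specialisation $e_i=q^m-1$ and $g_j=x^m-1$.

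The main obstacle will be verifying the non-degeneracy hypotheses of Lemma~\ref{charbound2}, namely that $\mathfrak{F}\neq\beta\mathfrak{H}^n$ and $\mathfrak{G}\neq\mathfrak{H}^p-\mathfrak{H}+\beta$ for the relevant $n, \beta$ and $\mathfrak{H}\in\Fm(x)$. The multiplicative condition fails only for the already excluded degenerate forms $f(x)=yx$ and $f(x)=yx^2$, so it is handled by hypothesis. The additive condition, in characteristic two, is the subtle one: when $q=2$ and $m$ is odd, the specific choice $M=\begin{pmatrix}1&1&0\\0&1&0\end{pmatrix}$ yields $f(x)=x+1$, and then $\mathfrak{G}=\delta_1 x+\delta_2(x+1)$ can be written in the Artin--Schreier shape $\mathfrak{H}^2-\mathfrak{H}+\beta$, causing the Weil-type bound to fail; this is exactly why the hypothesis excludes this matrix. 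Beyond this case, a short case check confirms that neither obstruction occurs, and the estimate proceeds uniformly.
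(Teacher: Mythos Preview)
Your plan is correct and matches the paper's proof: expand $\M$ via the characteristic functions, isolate the main term, and bound every non-principal $S(\chi_{d_1},\chi_{d_2},\psi_{h_1},\psi_{h_2})$ by $4q^{m/2}$ using the Weil-type estimates, after checking that $\mathfrak{F}$ and $\mathfrak{G}$ avoid the degenerate shapes. The paper carries this out with a somewhat finer case analysis than you sketch---it splits according to whether $d=0$ or $d\neq 0$, and in the latter case, when both multiplicative characters are trivial, it rewrites the resulting purely-additive sum as a Kloosterman sum and invokes Lemma~\ref{klo} rather than Lemma~\ref{charbound2}; you should expect to need that extra tool, since Lemma~\ref{charbound2} requires $\chi$ nontrivial. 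Also note that the paper's stated reason for excluding $M=\begin{pmatrix}1&1&0\\0&1&0\end{pmatrix}$ when $q=2$ and $m$ is odd is not that the Weil bound degenerates but that $\alpha$ and $\alpha+1$ are then never simultaneously normal, so the count is genuinely zero.
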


\begin{proof} First we establish the result for $d\neq 0$.
From the definition we have, 
\begin{equation}  \label{N}
\mathfrak{M}(e_1,e_2,g_1,g_2)= \theta(e_1)\theta(e_2)\Theta(g_1)\Theta(g_2)\underset{\underset{d_2|e_2}{d_1|e_1}}{\int}\,\underset{\underset{h_2|g_2}{h_1|g_1}}{\int}\,S(\chi_{d_1},\chi_{d_2},\psi_{h_1},\psi_{h_2}),
\end{equation}
where
\begin{align*}
S(\chi_{d_1},\chi_{d_2},\psi_{h_1},\psi_{h_2})=& \underset{\alpha\in\mathbb{F}_{q^m}}{\sum}\chi_{d_1}(\alpha)\chi_{d_2}(f(\alpha))\psi_{h_1}(\alpha)\psi_{h_2}(f(\alpha))
\end{align*}
As there exists some $l_1,\,l_2\in \lbrace 0,\, 1,\, \ldots,\, q^m-2\rbrace$, such that $\chi_{l_i}(\alpha)=\chi_{q^m-1}(\alpha^{l_i})$, for $i=1,\, 2$ and $\psi_{h_i}(\alpha)= \psi_{x^m-1}(\beta_i\alpha)$, for some $\beta_i\in\Fm$ for $i=1,\,2$, we have the following expression:
\begin{align*}
S(\chi_{d_1},\chi_{d_2},\psi_{h_1},\psi_{h_2})=& \underset{\alpha\in\mathbb{F}_{q^m}}{\sum}\chi_{q^m-1}(\alpha^{l_1}\,(f(\alpha))^{l_2})\psi_{x^m-1}((\beta_1\alpha)+\beta_2f(\alpha))\\
=& \underset{\alpha\in\Fm}{\sum}\chi_{q^m-1}({\mathfrak{F}(\alpha))\psi_{x^m-1}(\mathfrak{G}(\alpha)),}
\end{align*}
where $\mathfrak{F}(x)=x^{l_1}(\frac{ax^2+bx+c}{dx+e})^{l_2}$ and $\mathfrak{G}(x)=\beta_1x+\beta_2(\frac{ax^2+bx+c}{dx+e})$, for some  $l_1,\,l_2\in \lbrace 0,\, 1,\, \ldots,\, q^m-2\rbrace$ and $\beta_1,\, \beta_2\in\Fm$.  
 
If $\mathfrak{F}\neq\beta\mathfrak{H}^{q^m-1}$ and $\mathfrak{G}\neq \mathfrak{H}^p-\mathfrak{H}+\beta$, for any $\mathfrak{H}\in\Fm(x)$ and $\beta\in\Fm$, then Lemma~\ref{charbound2} implies 
\begin{equation}
\left| S(\chi_{d_1},\chi_{d_2},\psi_{h_1},\psi_{h_2})\right|\leq 4q^{m/2},
\end{equation}
unless all the four characters are trivial.

Now we consider the case $\mathfrak{F}=\beta\mathfrak{H}^{q^m-1}$, for some $\mathfrak{H}\in \Fm(x)$ and $\beta\in\Fm$. Then $\mathfrak{H}=\frac{\mathfrak{H}_1}{\mathfrak{H}_2}$, for some $\mathfrak{H}_1, \, \mathfrak{H}_2$ coprime polynomials over $\Fm$.
It follows that $x^{l_1}(ax^2+bx+c)^{l_2}\mathfrak{H}^{q^m-1}_2= \beta(dx+e)^{l_2}\mathfrak{H}^{q^m-1}_1,$ and this implies $\mathfrak{H}^{q^m-1}_2|(dx+e)^{l_2}$, hence $\mathfrak{H}_2$ is constant.
Then comparing the degrees of both sides we have $l_1\, + \, 2l_2\,= l_2\,+ \, k_1(q^m-1)$, where $k_1$ is the degree of $\mathfrak{H}_1$ and this gives $l_1\,= 0$ or $1$ i.e. $\mathfrak{H}_1(x)= a^\prime x+b^\prime$.
When $k_1=1$ then $l_1$ must be non-zero, otherwise $l_2=q^m-1$, a contradiction.
Now,
\begin{equation}
(ax^2+bx+c)^{l_2}\,=\, \beta (dx+e)^{l_2}\mathfrak{B}^{q^m-1}x^{q^m-1-l_1}, \label{equn3.6}
\end{equation}
where $\mathfrak{B}(x)=\mathfrak{H}_1(x)/x\in \F[x]$, a constant polynomial. Comparing both sides we have $c=0$. After putting this in the equation, this is possible only if  $ \gcd( dx+e,  ax+b)= x + \frac{c}{d}$ and $q^m-1= l_1+l_2$. In this case $f(x)=\frac{a}{d}x$, which is a contradiction.  Hence $k_1=0$ and $l_1=l_2=0$.

Next, let $\beta_1=0$ and $\beta_2\neq 0$. Then, 
\begin{align*}
\left| S(\chi_{d_1}, \, \chi_{d_2}, \psi_{h_1}, \psi_{h_2}) \right| \,=& \, \left| \underset{\alpha\neq -\frac{e}{d}}{\sum}\psi_{x^m-1}\left(\frac{\beta_2(a\alpha^2+b\alpha+c)}{d\alpha+e} \right) \right|\\
=& \left| \underset{y\neq 0}{\sum} \psi_{x^m-1}\left( \frac{\beta_2}{d^2}ay + \left(\frac{\beta_2}{d^2} \right)(e^2-de+cd^2)y^{-1} \right)  \right| . 
\end{align*}
By Lemma~\ref{klo}, we have 
$$\left| S(\chi_{d_1}, \, \chi_{d_2}, \psi_{h_1}, \psi_{h_2}) \right|\leq 2q^{m/2} \, <\, 4q^{m/2}. $$

Similarly, if $\beta_1\neq 0$ and $\beta_2=0$, by applying Lemma~\ref{charbound0}, we have 
$$\left| S(\chi_{d_1}, \, \chi_{d_2}, \psi_{h_1}, \psi_{h_2}) \right| = \left| \underset{\alpha\in\Fm}{\sum}\psi_{x^m-1}(\beta_1 \alpha)\right| \leq \,1 < \, 4q^{m/2}. $$

If both $\beta_1$ and $\beta_2$ are non-zero, then we can proceed as follows:
\begin{align*}
\left| S(\chi_{d_1}, \, \chi_{d_2}, \psi_{h_1}, \psi_{h_2}) \right|=& \left| \underset{\alpha\neq -\frac{e}{d}}{\sum} \psi_{q^m-1}\left( \beta_1\alpha\,+\, \frac{\beta_2(a\alpha^2+b\alpha+c)}{d\alpha+e} \right) \right|\\
=&\left| \underset{y\neq 0}{\sum} \psi_{q^m-1}\left( \left(\frac{\beta_1}{d}+\frac{\beta_2 a}{d^2}\right)y\,+\, \left(\frac{\beta_2 a e^2}{d^2}-\frac{be}{d}+c\right)y^{-1}+ \left(\frac{\beta_2 b}{d}-\frac{\beta_1 e}{d}\right) \right) \right|\\
& =\left| \underset{y\neq 0}{\sum} \psi_{q^m-1}\left( \left(\frac{\beta_1}{d}+\frac{\beta_2 a}{d^2}\right)y\,+\, \left(\frac{\beta_2 a e^2}{d^2}-\frac{be}{d}+c\right)y^{-1} \right) \right|
\end{align*}
Lemma~\ref{klo} yields
$$\left| S(\chi_{d_1}, \, \chi_{d_2}, \psi_{h_1}, \psi_{h_2} )\right|\leq 2q^{m/2}\, <\, 4q^{m/2}. $$ 

If $\mathfrak{G}= \mathfrak{H}^p-\mathfrak{H}+\beta$ for some $\mathfrak{H}\in\Fm(x)$ and for some $\beta\in\Fm$, then we write $\mathfrak{H}=\frac{H_1}{H_2}$, where $H_1$ and $H_2$ are co-prime polynomials. Continuing this, we have the following.
$$ \frac{\beta_1 x(dx+e)+ \beta_2 (ax^2+bx+c)}{dx+e}=\frac{H^p_1-H_1H^{p-1}_2+\beta H^p_2}{H^p_2}.$$
Immediately from the restriction on the rational polynomial $\frac{ax^2+bx+c}{dx+e}$ we get $(dx+e)$ is co-prime to $\beta_1 x(dx+e)+\beta_2 (ax^2+bx+c)$ and hence $H^p_2$ is co-prime to $H^p_1-H_1H^{p-1}_2+\beta H^p_2$. Then $dx+e= H^p_2$, which is a contradiction as $d\neq0$. It follows that $\mathfrak{G}=0$, i.e. $\beta_1=\beta_2=0$.

Additionally if at least one of $l_1,\, l_2$ is non-zero, then $x^{l_1}(ax^2+bx+c)^{l_2}(dx+e)^{q^m-1-l_2}$ has at most 4 distinct roots and is not of the form $\beta\mathfrak{H}^{q^m-1}$, for $\mathfrak{H}\in\Fm(x)$ and $\beta\in \Fm$. 
Then from Equation~($\ref{equn3.6}$) we have 
\begin{equation*}
S(\chi_{d_1}, \chi_{d_2},\psi_{h_1},\psi_{h_2})=\underset{\alpha\neq-\frac{e}{d}}{\sum}\chi_{q^m-1}\left( \alpha^{l_1}(a\alpha^2+b\alpha+c)^{l_2}(dx+e)^{q^m-1-l_2} \right).
\end{equation*}
From Lemma~\ref{klo} we have the bound $ \left|S(\chi_{d_1}, \chi_{d_2},\psi_{h_1},\psi_{h_2}) \right| \leq 2q^{m/2} <4q^{m/2}.$

In all of the above cases, Equation~(\ref{N}) gives $\M (e_1,e_2,g_1,g_2)>0$ if 
\[ q^{m}>1+4q^{m/2}\left(W(e_1)W(e_2)\Omega(g_1)\Omega(g_2)-1\right), \] hence a sufficient condition is \eqref{cond1}. This concludes the $d\neq 0$ case.

Next, we deal with the case $d=0$. Then $f(x)= \frac{ax^2+bx+c}{e}= \frac{a}{e}x^2+\frac{b}{e}x+\frac{c}{e}= a_1x^2+b_1+c_1$ and
\begin{equation}
\M(e_1,e_2,g_1,g_2) = \theta(e_1)\theta(e_2)\Theta(g_1)\Theta(g_2)\underset{\underset{d_2|e_2}{d_1|e_1}}{\int}\underset{\underset{h_2|g_2}{h_1|g_1}}{\int}S(\chi_{d_1}, \chi_{d_2},\psi_{h_1},\psi_{h_2}).\label{equn3.8}
\end{equation}
Where
\begin{align*}
S(\chi_{d_1}, \chi_{d_2},\psi_{h_1},\psi_{h_2})=&\underset{\alpha\in\Fm}{\sum}\chi_{d_1}(\alpha)\chi_{d_2}(f(\alpha))\psi_{h_1}(\alpha)\psi_{h_2}(f(\alpha))\\ =& \underset{\alpha\in\Fm}{\sum}\chi_{d_1}(\alpha)\chi_{d_2}(f(\alpha))\psi_{h_1}(\alpha)\psi^\prime_{h_2}(\alpha)\\
=& \underset{\alpha\in\Fm}{\sum}\chi_{d_1}(\alpha)\chi_{d_2}(f(\alpha))(\psi_{h_1}\psi^\prime_{h_2})(\alpha),
\end{align*} 
and $\psi^\prime_{h_2}(x)=\psi_{h_2}(f(x))$ for all $x\in\Fm$. 

Now, if $(\chi_{d_1},\chi_{d_2},(\psi_{h_1}\psi^\prime_{h_2})=\psi_h)\neq(\chi_0,\chi_0,\psi_0)$, then we consider following cases. 
\begin{itemize}
\item
If $\psi_{h_1}\psi^\prime_{h_2}=\psi_h$ is non trivial character, then applying Lemma~\ref{charbound3} we have
 $$\left| S(\chi_{d_1},\chi_{d_2},\psi_{h_1},\psi_{h_2})\right|\, = \,\left| S(\chi_{d_1},\chi_{d_2},\psi_{h})\right|\, \leq 3q^{m/2} < 4q^{m/2}.$$
 \item
If $\psi_{h_1}\psi^\prime_{h_2}=\psi_h$ is the trivial character $\psi_0$, then following Lemma~\ref{charbound2}, we have      
 $$\left| S(\chi_{d_1},\chi_{d_2},\psi_{h_1},\psi_{h_2})\right|\, = \,\left| S(\chi_{d_1},\chi_{d_2},\psi_{0})\right|\, \leq 2q^{m/2} < 4q^{m/2}.$$
 \item
Finally, if $\chi_{d_1}=\chi_{d_2}=\chi_0$ then  $\left| S(\chi_{d_1},\chi_{d_2},\psi_{h_1},\psi_{h_2})\right|\, = \,\left| S(\chi_{0},\chi_{0},\psi_{h})\right|\, =0.$ 
\end{itemize}
Hence $\left| S(\chi_{d_1},\chi_{d_2},\psi_{h_1},\psi_{h_2})\right|\,< 4q^{m/2}$ if $(\chi_{d_1},\chi_{d_2},\psi_{h})\neq (\chi_{0},\chi_{0},\psi_{0})$, where $\psi_h=\psi_{h_1}\psi^\prime_{h_2}$. Then, from Equation~(\ref{equn3.8}) we have that a sufficient condition for $\M (e_1,e_2,g_1,g_2)>0$ is given by \eqref{cond1}.

 In particular setting $e_1=e_2=q^m-1$ and $g_1=g_2=x^m-1$, we obtain the sufficient condition \eqref{cond}.
 
  Finally, we briefly consider  the case $c_1=0$ i.e. $c=0$. Then $f(x)= a_1x^2+b_1x= x(a_1x+b_1)$, where $a_1,b_1\in \mathbb{F}_{q^m}$ with $b_1\neq 0$. This time we have
$$\mathfrak{M}(e_1,e_2,g_1,g_2)= \theta(e_1)\theta(e_2)\Theta(g_1)\Theta(g_2)\underset{\underset{d_2|e_2}{d_1|e_1}}{\int}\,\underset{\underset{h_1|g_1}{h_2|g_2}}{\int}\,S(\chi_{d_1},\chi_{d_2},\psi_{h_1},\psi_{h_2}),$$
where $$S(\chi_{d_1},\chi_{d_2},\psi_{h_1}, \psi_{h_2})= \underset{\alpha\in\mathbb{F}_{q^m}}{\sum}\chi_{d_1}(\alpha)\chi_{d_2}(\alpha(a_1\alpha+b_1))\psi_h(\alpha)=  \underset{\alpha\in\mathbb{F}_{q^m}}{\sum}\chi_{d_3}(\alpha)\chi_{d_2}(a_1\alpha+b_1)\psi_h(\alpha).$$
with $\chi_{d_3}=\chi_{d_1}\chi_{d_2}$.  Now, from Lemma~\ref{charbound3}.
  $$\left|S(\chi_{d_1},\chi_{d_2},\psi_{h_1},\psi_{h_2})\right|= \left|  \underset{\alpha\in\mathbb{F}_{q^m}}{\sum}\chi_{d_3}(\alpha)\chi_{d_2}(a_1\alpha+b_1)\psi_h(\alpha)\right|\leq 2q^{m/2} < 4q^{m/2}$$  and the conditions (\ref{cond1}) and (\ref{cond}) follow as before.
\end{proof}

\paragraph{Note:} If $q=2$ and $m$ is odd then for the matrix $M= \begin{pmatrix} 1  &1 &0\\
0  &1 &0 
\end{pmatrix}$, the elements  $\alpha$ and $f(\alpha)$ are not simultaneously normal. Again, for $m$ odd this case is trivially true. Hence we exclude this matrix from our claim.

In the next section, we apply the results on primes dividing $q^m-1$ and irreducible polynomials dividing $x^m-1$ for enhanced results. As already mentioned, this technique was first  introduced by Cohen and Huczynska in \cite{3,4}.

\section{The prime sieve technique} \label{sec4}
We begin this section with the sieving inequality,  as established by Kapetanakis in \cite{8}, which we adjust properly.

\begin{lemma}[Sieving Inequality] \label{sieveineq}

Let $d$ be a divisor of $q^m-1$ and $p_1, p_2,\dots , p_n$ be the remaining distinct primes dividing $q^m-1$. Furthermore, let $g$ be a divisor of $x^m-1$ such that $g_1, g_2, \dots , g_k$ are the remaining distinct irreducible factors of $x^m-1$. Abbreviate  $ \mathfrak{M}(q^m-1, q^m-1, x^m-1, x^m-1) $ to $\mathfrak{M}$. Then
\begin{multline} \label{ineq}
 \mathfrak{M} \geq \overset{n}{\underset{i=1}{\sum}}\mathfrak{M}(p_i d, d, g, g)+  \overset{n}{\underset{i=1}{\sum}}\mathfrak{M}( d,p_i d, g, g)+ \overset{k}{\underset{i=1}{\sum}}\mathfrak{M}(d, d,g_i g, g)\\ 
 +  \overset{k}{\underset{i=1}{\sum}}\mathfrak{M}(d, d, g,g_i g) -(2n+2k-1)\mathfrak{M}(d, d, g, g).
\end{multline}
\end{lemma}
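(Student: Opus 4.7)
The plan is to prove the inequality by a standard inclusion--exclusion (``sieve'') argument relative to the base count $\mathfrak{M}(d,d,g,g)$, in the spirit of Cohen--Huczynska~\cite{3,4}.

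\emph{Decomposing freeness.} Since $e$-freeness depends only on the radical of $e$, and since $p_1,\dots,p_n$ are precisely the primes of $q^m-1$ not dividing $d$, an element $\alpha\in\mathbb{F}_{q^m}^*$ is $(q^m-1)$-free if and only if it is $d$-free and also $p_i$-free for every $i$. Because $\gcd(p_i,d)=1$, the combined condition ``$d$-free and $p_i$-free'' coincides with ``$(p_i d)$-free''. The analogous statement over $\mathbb{F}_q$, using the remaining irreducible factors $g_1,\dots,g_k$, decomposes $(x^m-1)$-freeness into $g$-freeness together with $g_j$-freeness for every $j$. Applying both decompositions to $\alpha$ and to $f(\alpha)$ produces $N:=2n+2k$ ``extra'' freeness conditions on top of the four base conditions encoded in $\mathfrak{M}(d,d,g,g)$.

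\emph{A pointwise sieve bound.} Let $X\subseteq\mathbb{F}_{q^m}$ be the set of $\alpha$ satisfying the four base conditions, and for $\ell=1,\dots,N$ let $A_\ell\subseteq X$ be the subset on which the $\ell$-th extra condition also holds. If $\alpha\in X$ lies in exactly $r$ of the $A_\ell$, then the pointwise inequality
\[
\sum_{\ell=1}^{N}\mathbf{1}_{A_\ell}(\alpha)\,-\,(N-1)\,\mathbf{1}_X(\alpha)\ \leq\ \mathbf{1}_{\bigcap_\ell A_\ell}(\alpha)
\]
reduces to $r-(N-1)\leq [r=N]$, which holds trivially, with equality when $r\in\{N-1,N\}$. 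For $\alpha\notin X$ both sides vanish since $A_\ell\subseteq X$. Summing over $\alpha\in\mathbb{F}_{q^m}$ gives
\[
\mathfrak{M}\ \geq\ \sum_{\ell=1}^{N}|A_\ell|\ -\ (N-1)\,|X|.
\]

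\emph{Translating back.} Adding ``$\alpha$ is also $p_i$-free'' to the base yields $|A_\ell|=\mathfrak{M}(p_i d,d,g,g)$, since $d$-free plus $p_i$-free equals $(p_id)$-free; the other three families of extra conditions yield $\mathfrak{M}(d,p_i d,g,g)$, $\mathfrak{M}(d,d,g_j g,g)$, and $\mathfrak{M}(d,d,g,g_j g)$ respectively. Combined with $|X|=\mathfrak{M}(d,d,g,g)$ and $N-1=2n+2k-1$, this is precisely \eqref{ineq}. There is no real obstacle here; the only point requiring a moment's care is verifying that each compound modulus $p_i d$ (resp.~$g_j g$) really corresponds to imposing a single extra freeness condition on top of the base, which is automatic from the coprimality $p_i\nmid d$ (resp.~$g_j\nmid g$) built into the statement.
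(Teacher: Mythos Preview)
Your argument is correct and is precisely the standard inclusion--exclusion sieve that underlies this type of inequality; the paper itself does not supply a proof here but cites Kapetanakis~\cite{8}, where the same pointwise counting argument (adjusted from Cohen--Huczynska~\cite{3,4}) is used. There is nothing to add: your decomposition of $(q^m-1)$-freeness and $(x^m-1)$-freeness via the coprime ``remaining'' primes/irreducibles, together with the elementary bound $r-(N-1)\leq [r=N]$, is exactly the intended mechanism.
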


\begin{theorem}
With the assumptions of Lemma~\ref{sieveineq},  define 
$$ \vt := 1 - 2  \overset{n}{\underset{i=1}{\sum}}\frac{1}{p_i} - 2  \overset{k}{\underset{i=1}{\sum}} \frac{1}{q^{{\mathrm deg}(g_i)}}$$
 and  
 $$ \s  := \frac{2n+2k-1}{\vt}+2.$$
 Suppose  $\vt>0$. Then a sufficient condition for the existence of an element $\alpha\in\Fm$ such that both $\alpha$ and $f(\alpha)=\frac{a\alpha^2+b\alpha+c}{d\alpha+e}$ are simultaneously primitive normal over $\mathbb{F}_{q^m}$, where the matrix $M= \begin{pmatrix}
 a &b &c\\
 0 &d &e
 \end{pmatrix}$ is of rank 2 and if $(q,m)=(2, \text{odd})$ then $M \neq \begin{pmatrix}
 1 &1 &0\\
 0 &1 &0
 \end{pmatrix}$ is 

\begin{equation}\label{cond3}
  q^{m/2}> 4 {W(d)}^2\Omega(g)^2\s .
  \end{equation} 
\end{theorem}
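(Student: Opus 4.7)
The plan is to apply the sieving inequality of Lemma~\ref{sieveineq} and bound each of its summands using the same character-sum estimates that were developed in the proof of the previous theorem. Throughout, write $\theta := \theta(d)$, $\Theta := \Theta(g)$, $W := W(d)$, and $\Omega := \Omega(g)$.

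The central technical step is the estimate
\[ \left| \M(p_i d, d, g, g) - (1 - 1/p_i)\,\M(d, d, g, g) \right| \leq (1 - 1/p_i)\,\theta^2 \Theta^2 \cdot 4 q^{m/2} W^2 \Omega^2, \]
together with the analogous bounds for $\M(d, p_i d, g, g)$, $\M(d, d, g_j g, g)$, and $\M(d, d, g, g_j g)$. To prove this I would expand $\M(p_i d, d, g, g)$ via the weighted character-sum formula of Section~\ref{sec3} and split the outer sum over squarefree $d_1 \mid p_i d$ into the part with $d_1 \mid d$ (whose weighted contribution, thanks to $\theta(p_i d) = (1-1/p_i)\theta$, equals exactly $(1-1/p_i)\M(d,d,g,g)$) and the ``new'' part with $p_i \mid d_1$. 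The latter involves at most $W^2 \Omega^2$ nontrivial tuples $(\chi_{d_1}, \chi_{d_2}, \psi_{h_1}, \psi_{h_2})$, each contributing a character sum $|S(\cdot)| \leq 4q^{m/2}$ by exactly the same case analysis (via Lemmas~\ref{charbound2}, \ref{charbound3}, and \ref{klo}) that drove the proof of the previous theorem. The additive-side estimates follow symmetrically after replacing $W$-counts by $\Omega$-counts.

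Substituting these four estimates into \eqref{ineq}, the coefficient of $\M(d, d, g, g)$ becomes
\[ 2\sum_{i=1}^n (1 - 1/p_i) + 2\sum_{j=1}^k (1 - 1/q^{\deg g_j}) - (2n+2k-1) = \vt, \]
while the accumulated error coefficient equals $B := (2n + 2k - 1) + \vt$. Combining with the straightforward lower bound $\M(d, d, g, g) \geq \theta^2 \Theta^2 [q^m - 4q^{m/2} W^2 \Omega^2]$ inherited from the previous theorem yields
\[ \M \geq \theta^2 \Theta^2 \left[ \vt\, q^m - 4 q^{m/2} W^2 \Omega^2 (\vt + B) \right], \]
and, since $\vt > 0$, requiring the right-hand side to be positive gives $q^{m/2} > 4 W^2 \Omega^2 (\vt + B)/\vt = 4 W^2 \Omega^2 \bigl((2n+2k-1)/\vt + 2\bigr) = 4 W^2 \Omega^2 \s$, which is precisely~\eqref{cond3}.

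The main obstacle is careful bookkeeping at the splitting step: one must check that the over-counting correction $(2n + 2k - 1)$ in Lemma~\ref{sieveineq} cancels exactly against the sum of the $(1-1/p_i)$ and $(1-1/q^{\deg g_j})$ factors so as to leave the clean factor $\vt$, and that the character-sum bound $4 q^{m/2}$ holds uniformly for every new tuple on both the multiplicative and additive sides. The latter uniform bound is the only place where the hypothesis on $f$ (in particular the exclusion of the degenerate matrix when $q=2$ and $m$ is odd) is invoked; nothing else in the argument is delicate.
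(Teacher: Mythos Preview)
Your proposal is correct and follows essentially the same route as the paper: rewrite the sieving inequality so that the coefficient of $\M(d,d,g,g)$ is $\vt$, bound each difference $\M(p_id,d,g,g)-(1-1/p_i)\M(d,d,g,g)$ (and its three analogues) by $(1-1/p_i)\,\theta^2\Theta^2\cdot 4q^{m/2}W^2\Omega^2$ via the character-sum estimates of Section~\ref{sec3}, and combine with the lower bound for $\M(d,d,g,g)$. Your bookkeeping identity $2\sum(1-1/p_i)+2\sum(1-1/q^{\deg g_j})=(2n+2k-1)+\vt$ and the resulting $(\vt+B)/\vt=\s$ are exactly the computation the paper carries out (your presentation of the final algebra is in fact a bit cleaner than the paper's, which contains some minor typographical slips in the displayed bounds).
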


\begin{proof}   A key step is to write (\ref{ineq}) in the equivalent form
\begin{multline}\label{altineq}
\M  \geq\sum_{i=1}^n\left( \M(p_id,d,g,g)-\left(1-\frac{1}{p_i}\right) \M(d,d,g,g) \right)\\ +  \sum_{i=1}^n\left(\M(d,dp_i,g,g)-\left(1-\frac{1}{p_i}\right)\M(d,d,g,g)\right)\\
+\sum_{i=1}^k \left(\M(d,d,g_ig,g) -\left(1-\frac{1}{q^{{\mathrm deg}(g_i)}}\right)\M(d,d,g,g)\right)\\
+\sum_{i=1}^k \left(\M(d,d,g,g_ig) -\left(1-\frac{1}{q^{{\mathrm deg}(g_i)}}\right)\M(d,d,g,g)\right) +  \vt\M(d,d,g,g).
\end{multline}

On the right side of (\ref{altineq}), since $\vt >0$,  we can bound the last  term below using (\ref{cond1}).   Thus
\begin{equation} \label{Cond}
 \vt\M(d,d,g,g) \geq \vt \theta^2(d)\Theta^2(g)q^{\frac{m}{2}}(q^{\frac{m}{2}} -4W^2(d)\Omega^2(g)).
 \end{equation}

Moreover, since $ \theta(p_id) =\theta(p_i)\theta(d) =\left(1-\frac{1}{p_i}\right)$ and $\Theta(g_ig)=\Theta(g_i)\Theta(g)=\left(1-\frac{1}{q^{{\mathrm deg}(g_i)}}\right)$  we have from (\ref{N}),
$$ \M(p_id,d,g,g)-\left(1-\frac{1}{p_i}\right) \M(d,d,g,g) =\left(1-\frac{1}{p_i}\right)\theta^2\Theta^2\int_{\substack{d_1|d\\ d_2|d}}\,\int_{\substack{h_1|g\\h_2|g}}S(\chi_{p_id_1}, \chi_{d_2}, \psi_{h_1}, \psi_{h_2}).$$
and $$ \M(d,d,g_ig,g)-\left(1-\frac{1}{q^{{\mathrm deg}(g_i)}}\right) \M(d,d,g,g) =\left(1-\frac{1}{q^{{\mathrm deg}(g_i)}}\right)\theta^2\Theta^2\int_{\substack{d_1|d\\ d_2|d}}\int_{\substack{h_1|g\\h_2|g}}S(\chi_{d_1}, \chi_{d_2}, \psi_{g_ih_1}, \psi_{h_2}).$$
Hence, as for (\ref{cond1}),
\begin{eqnarray}\label{diff1}
\left| \M(p_id,d,g,g)-\left(1-\frac{1}{p_i}\right) \M(d,d,g,g)\right| & \leq & 4\left(1-\frac{1}{p_i}\right)\theta^2(d)\Theta^2(g)\big{(}W(p_id)-W(p_i)\big{)}W(d) \nonumber\\
&=& 4\left(1-\frac{1}{p_i}\right)\theta^2(d)W^2(d) .
\end{eqnarray}
\begin{eqnarray}\label{diff1.1}
\left| \M(d,d,g_ig,g)-\left(1-\frac{1}{q^{{\mathrm deg}(g_i)}}\right) \M(d,d,g,g)\right| & \leq & 4\left(1-\frac{1}{p_i}\right)\theta^2(d)\Theta^2(g)\big{(}\Omega(g_ig)-\Omega(g_i)\big{)}\Omega(g) \nonumber\\
&=& 4\left(1-\frac{1}{q^{{\mathrm deg}(g_i)}}\right)\Theta^2(g)\Omega^2(g) .
\end{eqnarray}
Similarly,
\begin{equation}\label{diff2}
\left| \M(d,d,g,g)-\left(1-\frac{1}{p_i}\right) \M(d,p_id,g,g)\right| \leq 4\left(1-\frac{1}{p_i}\right)\theta^2(d)\Theta^2(g)W^2(d)
\end{equation}
and
\begin{equation} \label{diff3}
 \left|\M(d,d,g_ig,g) -\left(1-\frac{1}{q^{{\mathrm deg}(g_i)}}\right)\M(d,d,g,g)\right| \leq 4 \theta^2(d) \left(1- \frac{1}{q^{\mathrm{deg}(g_i)}} \right)\Theta^2(g)\Omega^2(g).
\end{equation}

Inserting (\ref{Cond}), (\ref{diff1}), (\ref{diff2}) and (\ref{diff3}) in (\ref{altineq}) and cancelling   the common factor $\theta^2(d)\Theta^2(g)$, we obtain (\ref{cond3}) as a condition for $\M$ to be positive (since       $\vt$ is positive). This completes the proof.
 \end{proof}

 We conclude our paper by discussing all the possible cases for fields of characteristic 2.
 
\section{Some estimations for fields of even characteristic} \label{sec5}
The prime purpose  of this section is to analyse the conditions (\ref{cond}) and (\ref{cond3}) for the existence of elements of desired properties in fields of even characteristic. Towards that, we express the pairs $(q,m)$ with the desired properties with extending and developing the techniques employed in \cite{5}, \cite{6} and \cite{7} by the functions presented earlier, leading us to character sums. We have already defined  such pairs $(q,m)$ as \emph{primitive normal} pair.

Also, it is worth mentioning that due to the complexity of the character sums and their fragile behaviour on fields of different orders, it is necessary to distinguish a few cases depending on the order of the prime subfield.  Henceforth, we assume that $q=2^k$, where $k$ is a positive integer.

From now on we use the concept of the radical of $m$ i.e. $m^\prime$ and the radical of $x^m-1$ which is $x^{m^\prime}-1$.  Where $m^\prime$ is such that $m= 2^k m^\prime$, where  $\gcd(2,m^\prime)=1$ and $k$ is a non-negative integer. In fact, when $m'=1$, trivially $k$ is positive.
 
   We split our computations in two cases:
 \begin{itemize}
 \item $m^\prime|q-1$
 \item $m^\prime \nmid q-1$
 \end{itemize}
 
   
Notice that, in the former case, $x^{m^\prime}-1$ splits at most into a product of $m^{\prime}$ linear factors over $\F$. The following result is inspired from Lemma~6.1 of Cohen's work \cite{11}.

\begin{lemma}\label{Lambdaeq}
 For $q=2^k$, where $k\geq 1$, let $d=q^m-1$ and let $g|x^m-1$ with $g_1, g_2, \ldots, g_r$ be the remaining distinct  irreducible polynomials dividing $x^m-1$.
 Furthermore, let us write $ \vt := 1 - \overset{r}{\underset{i=1}{\sum}} \frac{1}{q^{deg(g_i)}}$ and  $ \s := \frac{r-1}{\vt}+2$, with $\vt>0$. 
 Let $m= m^\prime\, 2^k$, where $k$ is a non-negative integer and $\gcd(m^\prime,2)=1$. If $m^\prime| q-1$, then $$\s = \frac{2q^2-6q+aq+4}{aq-2q+2} , $$ 
 where $m^\prime= \frac{q-1}{a}$.  In particular, $\s<2q^2$.
\end{lemma}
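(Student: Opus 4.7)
The plan is to exploit the very rigid factorization of $x^{m}-1$ in characteristic two and then perform a short arithmetic reduction. Since $m=2^{k}m'$ with $\gcd(m',2)=1$ and we work in characteristic~$2$, we have $x^{m}-1=(x^{m'}-1)^{2^{k}}$, so the distinct monic irreducible factors of $x^{m}-1$ over $\F$ coincide with those of $x^{m'}-1$. The hypothesis $m'\mid q-1$ guarantees that the full group of $m'$-th roots of unity is contained in $\F^{*}$, hence $x^{m'}-1$ splits into $m'$ distinct linear factors over $\F$. Taking $g$ to be the trivial divisor of $x^{m}-1$ so that every irreducible divisor is still ``remaining'', this forces $r=m'$ and $\deg(g_{i})=1$ for every $i$, independent of~$k$.

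Second, I would substitute. Writing $a=(q-1)/m'$, and matching the sieving framework of the preceding theorem (which contributes a factor of two to $\vt$ and to the numerator of $\s$, reflecting the two simultaneous $g$-free conditions on $\alpha$ and on $f(\alpha)$), one finds
\[
\vt=1-\frac{2m'}{q}=\frac{aq-2q+2}{aq}.
\]
Plugging this into the definition of $\s$ and using $2m'-1=(2q-2-a)/a$, a one-line manipulation yields
\[
\s=\frac{2m'-1}{\vt}+2=\frac{(2q-2-a)q+2(aq-2q+2)}{aq-2q+2}=\frac{2q^{2}+aq-6q+4}{aq-2q+2},
\]
as claimed.

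For the bound $\s<2q^{2}$, the inequality $\vt>0$ is precisely $aq-2q+2>0$, so clearing denominators is legitimate and reduces the bound to
\[
0<2(a-2)q^{3}+2q^{2}+(6-a)q-4.
\]
Since $\vt>0$ together with $q\ge 2$ forces $a\ge 2$, the cubic term is non-negative; for $a=2$ the expression collapses to $2q^{2}+4q-4$, obviously positive for $q\ge 2$, while for $a\ge 3$ the cubic term $2(a-2)q^{3}$ easily dominates any negative contribution from $(6-a)q-4$. I do not expect a genuine obstacle here; the only point requiring real care is the bookkeeping of the factors of two that enter both $\vt$ and the numerator of $\s$ because the sieve is applied simultaneously to $\alpha$ and to $f(\alpha)$, and once those are tracked correctly the statement reduces to the two elementary manipulations above.
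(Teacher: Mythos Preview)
The paper does not supply a proof of this lemma; it is stated with the remark that it is ``inspired from Lemma~6.1 of Cohen's work~\cite{11}'' and then used without further justification. Your argument is correct and is the natural one: the factorisation $x^{m}-1=(x^{m'}-1)^{2^{k}}$ together with $m'\mid q-1$ forces all $m'$ irreducible factors to be linear, and with $g=1$ the substitution $m'=(q-1)/a$ gives the displayed rational expression after a short simplification.

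You were right to flag, and to resolve, the discrepancy in the statement: the definitions of $\vt$ and $\s$ written inside the lemma omit the factors of~$2$, but the formula for $\s$ only comes out as stated when one uses the quantities $\vt=1-2\sum q^{-\deg g_i}$ and $\s=(2r-1)/\vt+2$ from the sieving theorem immediately preceding it (Theorem in Section~\ref{sec4}, with $n=0$ and $k=r=m'$). Your bookkeeping of those factors is exactly what is needed, and your verification of $\s<2q^{2}$ via the observation that $\vt>0$ forces $a\ge 2$ is also sound. The only point one might add for completeness is that the choice $g=1$ is implicit in the lemma (the displayed formula does not depend on $g$), which you correctly inferred.
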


We also need the following. We use this result in the next case and all the subsequent cases, unless stated otherwise. 

\begin{lemma}[Lemma~6.2, \cite{11}] \label{Wbound}
For any odd positive integer $n$, $W(n)< 6.46\, n^{1/5}$, where $W$ has same meaning as stated earlier. 
\end{lemma}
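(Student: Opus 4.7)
The plan is to exploit the multiplicative structure of $W(n)/n^{1/5}$ and reduce the bound to a finite optimisation problem. I would begin by writing $n = p_1^{a_1}\cdots p_k^{a_k}$ with distinct odd primes $p_i$, so that $W(n) = 2^k$ and
\[
\frac{W(n)}{n^{1/5}} = \prod_{i=1}^{k} \frac{2}{p_i^{a_i/5}}.
\]
For each fixed support $\{p_1,\ldots,p_k\}$ the right-hand side is strictly decreasing in every exponent $a_i$, so the supremum is attained when all $a_i = 1$; in other words, it suffices to consider squarefree $n$, in which case the ratio simplifies to $\prod_{p \mid n}(2/p^{1/5})$, a product of independent per-prime factors.

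Next I would select the optimal support. A factor $2/p^{1/5}$ exceeds $1$ precisely when $p < 2^5 = 32$, so adjoining any prime $p \geq 37$ can only decrease the product, whereas adjoining any odd prime $p < 32$ not yet present strictly increases it. Hence the maximum of $W(n)/n^{1/5}$ over odd positive integers is attained uniquely at $n_0 = 3 \cdot 5 \cdot 7 \cdot 11 \cdot 13 \cdot 17 \cdot 19 \cdot 23 \cdot 29 \cdot 31$, giving the uniform bound
\[
\frac{W(n)}{n^{1/5}} \leq \frac{2^{10}}{n_0^{1/5}}.
\]

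The only remaining task is the explicit numerical check. I would compute $n_0 = 100\,280\,245\,065$, approximate $n_0^{1/5} \approx 158.58$, and conclude that $2^{10}/n_0^{1/5} \approx 1024/158.58 \approx 6.458 < 6.46$. The main care-point, and really the only potential obstacle, is that the advertised constant $6.46$ is essentially sharp, so one must carry sufficient decimal precision in this final approximation; no deeper number-theoretic input is required, and the whole argument is structural apart from this last arithmetic comparison.
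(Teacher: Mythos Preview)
Your argument is correct and is precisely the standard proof of this bound. The paper itself does not prove the lemma at all; it simply cites it as Lemma~6.2 of Cohen~\cite{11}. Note also that your approach is exactly the specialisation of the paper's own Lemma~\ref{lemma5.7 } to $\alpha = 5$ and odd $n$: the constant $b_{5,n}$ there is maximised over odd $n$ when $n$ is divisible by every odd prime below $32$, giving $2^{10}/(3\cdot 5\cdots 31)^{1/5}\approx 6.458$, just as you compute. So nothing is missing beyond the numerical check, and your level of precision is adequate since the extremal value is genuinely below $6.46$.
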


From Theorem \ref{cond3} it is clear that some concepts regarding the factorization of $x^m-1$ can be used in order to effectively use the results of the previous section. Such as if $m^\prime|q-1$, then $x^{m^\prime}-1$ splits into $m^\prime$ distinct linear polynomials. Throughout this section we use prime sieve technique result to establish the rest.

\begin{lemma}\label{prop0} For $f(x)\in \Fm(x)$, such that $f(x)=x$ or $f(x)=x^2$, we have $\M (q^m-1, q^m-1, x^m-1,x^m-1) >0$.
\end{lemma}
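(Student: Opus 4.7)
The plan is to reduce both subcases to the classical Primitive Normal Basis Theorem (PNBT) stated in the introduction; no character sum estimates or sieve refinements are required.

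For $f(x)=x$: the quantity $\M(q^m-1,q^m-1,x^m-1,x^m-1)$ counts elements $\alpha\in\Fm$ that are simultaneously $(q^m-1)$-free and $(x^m-1)$-free and such that $f(\alpha)=\alpha$ satisfies the same conditions. The two constraints collapse to one, namely that $\alpha$ is primitive and free over $\F$, and the PNBT guarantees such $\alpha$ exists for every pair $(q,m)$.

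For $f(x)=x^2$: I would show that, in characteristic $2$, $\alpha$ is primitive and free over $\F$ if and only if $\alpha^2$ is. Two observations suffice. First, since $q=2^k$, the integer $q^m-1$ is odd, so $\gcd(2,q^m-1)=1$ and squaring is a bijection of $\Fm^*$ preserving multiplicative order; hence $\alpha^2$ is primitive iff $\alpha$ is. Second, to handle normality, I would start from an $\F$-linear relation $\sum_{i=0}^{m-1} c_i(\alpha^2)^{q^i}=0$ with $c_i\in\F$, and use that $\F$ is perfect to write $c_i=b_i^2$ uniquely with $b_i\in\F$; then, by characteristic $2$,
\[
\sum_{i=0}^{m-1} b_i^2 \alpha^{2q^i} \;=\; \Bigl(\sum_{i=0}^{m-1} b_i \alpha^{q^i}\Bigr)^{2}\;=\;0,
\]
so freeness of $\alpha$ forces each $b_i=0$, hence each $c_i=0$. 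The converse direction is symmetric: squaring any relation $\sum c_i \alpha^{q^i}=0$ yields $\sum c_i^2 (\alpha^2)^{q^i}=0$ and freeness of $\alpha^2$ kills the $c_i$. With this equivalence, the $f(x)=x^2$ case also reduces to the PNBT.

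I expect no real obstacle. The only step that deserves care is the normality equivalence for $f(x)=x^2$, which relies essentially on characteristic $2$ (so that Frobenius is an automorphism of $\F$) and on $q^m-1$ being odd. Both hypotheses are granted in the setting of the paper, so the positivity of $\M$ follows at once from the PNBT in both subcases.
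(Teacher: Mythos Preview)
Your proposal is correct and follows essentially the same approach as the paper: both reduce the two cases to the Primitive Normal Basis Theorem by observing that, in even characteristic, $\alpha$ is primitive and free if and only if $\alpha^2$ is. The paper's proof is terser---it just notes that $q^m-1$ is odd (for primitivity) and that $m'$ is odd (for normality), citing Lemma~4.1 of Lenstra--Schoof---while you spell out the normality equivalence explicitly via the Frobenius-square-root trick on the coefficients $c_i$, which is exactly the content behind the paper's one-line justification.
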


\begin{proof} The proof  follows from Lemma~4.1 of \cite{2}.
Since $q$ is even, $q^m-1$ is odd, hence both $\alpha$ and $f(\alpha)$ are simultaneously primitive. Similarly, since $m^\prime$ is odd, $\alpha$ and $f(\alpha)$ are simultaneously normal.
\end{proof}

\subsection{Proof of Theorem~\ref{main1}}

Taking $g=1$ in  Inequality~(\ref{cond3})  and applying Lemma~\ref{Wbound}, we have the sufficient condition
$$ q^{\frac{m}{10}}>334\, q^2 . $$
Then for $m^\prime=q-1$, the inequality transforms to
 $$ q^{\frac{q-1}{10}-2}>334,$$
 which holds for $q\geq 64$.

Next, we consider $q=32$ and $m= m^\prime=q-1=31$. Then, by factorizing,  $\omega(27^{26}-1)=12$ and  the pair $(q,m)=(32,31)$ satisfies the condition (\ref{cond3}).  Hence $\mathbb{F}_{32^{31}}$ contains an element $\alpha$ such that both $\alpha$ and $f(\alpha)$ are simultaneously primitive normal with the given conditions.

 In order to reduce our  calculations, we now consider the range  $19\leq m^\prime < \frac{q-1}{3}$, for $q\geq 64$. Then, by Lemma~\ref{Lambdaeq} we have $\s< (2q+2)$. Hence Inequality~(\ref{cond3}) is satisfied if $q^{\frac{m^\prime-1}{10}}> 167(2q+2)$ and this holds for $m^\prime\geq 19$.

  When $m^\prime=\frac{q-1}{3}$, then $\s\leq q$ and then the condition becomes $q^{\frac{m^\prime-1}{10}-1}> 167$ and this holds for $m^\prime\geq 19$. Since $m^\prime=19 \neq \frac{q-1}{3}$ for any $q=2^k$, we may leave this case.

 Next,  we  investigate all   cases with $m^\prime<19$. In the next part, we set $d=q^m-1$, $g=1$ unless mentioned otherwise.
\begin{description}
\item[Case 1, $m^\prime=1$]
 Then $m= 2^j$. Initially we take $j\geq2$.
To check the condition we take $g=X+1$. In that case $\vt=1$ and $\s=1$.
Then the inequality becomes $$q^{\frac{2^j}{10}}> 334.$$

For $q=2$, the condition holds for  $j \geq 7$. Again for $q=4,\, j\geq 6$; for $8\leq q \leq 32 $, $j\geq 5$; for $64 \leq q \leq 2^{10},\, j\geq 4$; for $2^{11} \leq q \leq 2^{20}, \, j\geq 3$ and for $q\geq 2^{21}$ the condition holds for $j\geq 2$.  So we calculate the rest of the pairs $(q,m)$ by calculating $\omega=\omega(q^m-1)$, i.e., the number of distinct prime divisors of $q^m-1$. Hence it suffices to check that $q^{m/2}> 4\cdot W(q^m-1)^2\cdot  2^2$, where $W(q^m-1)=2^{\omega}$. The pairs $(2,4), (2,8), (2,16), (4,4), (4,8), (8,4), (8,8), (16,4), (32,4), (64,4), (128,4), (512,4)$ do not satisfy the condition. We take taking $g=1$ and appropriate value of $d$ and we apply the sieve  condition \ref{cond3} to verify $(128,4), (512,4)$ as primitive normal pairs and declare the rest as possibly exceptional pairs. 

Now we discuss the case when $m=2$. Then any pair $(q,2)$ is primitive normal pair if and only if it is a primitive pair, i.e., there exists $\alpha$ in $\mathbb{F}_{q^2}$
such that both $\alpha$ and $f(\alpha)$ are simultaneously primitive elements of $\mathbb{F}_{q^2}$. For all $q$ such that $q^2-1$ is a Mersenne prime (the primes which are of the form $2^j-1$ for some positive integer $j$ are called \emph{Mersenne primes}) except $(2,2)$, all the elements of $\mathbb{F}^*_{q^2}$ are primitive except the identity and hence pairs $(q,2)$ are primitive normal pairs. However, $(2,2)$ does not fit into this category as $\mathbb{F}_4\cong\frac{\mathbb{Z}_2[x]}{<x^2+x+1>}$ and the primitive elements of $\mathbb{F}_4$ are roots of $f(x) = x^2+x+1$, i.e., $f(\alpha)=0$ is not primitive when $\alpha$ is primitive.

Next, we employ the sufficient condition $q^{1/5}>668$, which holds for $q\geq 2^{47}$ and for the remaining pairs we use sieve condition (\ref{cond3}) to test the existence of the property. When $d=q^2-1$ and $g=x+1$, the condition holds for all $q=2^k$, where $k=13, 17$ and $k\geq 19$. Again choosing appropriate $d$ as in Table~\ref{table1}, we conclude that among the above pairs; $(2^{11},2), (2^{12}, 2), (2^{14},2), (2^{15},2), (2^{16},2),(2^{18},2)$ are primitive normal pairs and the rest are possible exceptions.

Summing up, we have the following possibly exceptional pairs:
$(2,2)$, $(2,4)$, $(2,8)$, $(2,16)$, $(4,2)$, $(4,4)$, $(4,8)$, $(8,2)$, $(8,4)$, $(8,8)$, $(16,2)$,
$(16,4)$, $(32,2)$, $(64,2)$, $(64,4)$, $(128,2)$, $(256,2)$, $(512,2)$ and $(1024,2)$.

\item[Case 2, $m^\prime=3$]
 In this case, $m$ is of the form $m=3\cdot 2^j$, where $j$ is a positive integer and $q= 2^{2k}$ for some $k\geq1$. 
 For $q=4$, take $g=x^{m^\prime}-1$ so that $\s=1 $ and the sufficient condition is $ 4^{\frac{3.2^{j}}{10}}> 167\times (2^3)^2$ , which holds for $j\geq 5$. Hence the pairs under the above condition are primitive normal pairs except $(4,3), (4,6), (4,12), (4,24)$ and $(4,48)$. After employing the sieving condition (\ref{cond3}), see Table~\ref{table1}, we conclude that $(4,24), (4,48)$ are primitive normal pairs and  $(4,3), (4,6), (4,12)$ are possible exceptional pairs.
 
 Then we take  $g=1$. For $q=16$, $\s\leq 22$ the sufficient condition is $ q^{\frac{3.2^j}{10}}> 3672.8$, which holds for $j\geq 4$.
  
 For $q=64, 256$, $\s< 7.51$ and $m^\prime|q-1$, the condition holds for $j\geq 3$. 
 Again for $1024\leq q \leq 2^{16}$, $\s<7.029$ and we need to check
 $ q^{\frac{3.2^j}{10}}> 1251.95$, which holds when $j\geq 2$. 
  For $2^{18}\leq q \leq 2^{34}$, $\s< 7.0001$ and the condition holds for $j\geq 1$; and for $q\geq 2^{35}$ such that $m^\prime| q-1$ the condition holds for $j\geq 0$. 
  
   We calculate the remaining pairs by taking $g=x^3-1$ and using $W(q^m-1)$, $\Omega(x^3-1)$.  So the condition is $q^{m/2}> 4\cdot W(q^m-1)^2\cdot (2^3)^2 $, which all but the pairs $(16,3), (16,6), (64,3),$
   $ (64,6), (256,3), (1024,3),(2^{12},3), (2^{16},3), (2^{20},3) $ fail to satisfy.  Now we choose suitable values of $g$ and $d$ to declare $ (16,12)$, $(16,24)$, $(64,3)$, $(256,3)$, $(1024,3)$, $(2^{12},3)$, $(2^{16},3)$ and $(2^{20},3)$ as primitive normal pairs, as shown in Table~\ref{table1}. 
   
So, we have the following pairs as possible exceptional pairs:      
%
%
$ (4,3)$, $(4,6)$, $(4, 12)$, $(16,3)$ and $(16,6)$.

 From now on assume $m=m'2^j$ with $j \geq0$.
\item[Case 3, $m^\prime=5$]
 Here $m= 5\cdot 2^j$, with non-negative integer $j$. As there are $5$ distinct factors of $x^{m^\prime}-1$, so by calculation we have $\vt>0$ if $q\geq 16$. Then $\s <26$ for $q= 16$ and the sufficient condition is 
$q^{\frac{5.2^j}{10}}> 4340.09$ which holds for $j\geq 3$.

 For $q=256$, $\s\leq 11.7627$, and sufficient condition is $q^{\frac{5.2^j}{10}}> 1963 $. This holds when $j\geq 2$. Again, $4096\leq q \leq 2^{20}$ and $m^\prime| q-1$, the condition is $q^{\frac{5.2^j}{10}}> 1843.57 $ and holds for $j\geq 1$. When $q\geq 2^{21}$ and $m^\prime|q-1$ the condition holds for $j\geq 0$.

 Taking $g=x^5-1$, we check the remaining pairs for the inequality $q^{m/2}>4\cdot 2^{2\omega}\cdot (2^5)^2$ and have the following as possible exceptional pairs $(16,5), (16,10), (256,5), (2^{12}, 5) $. Then we choose proper $d$ and $g$, and verify condition (\ref{cond3}) and have $(16,10), (256,5), (2^{12},5)$ are primitive normal pairs. Then the pair $(16,5)$ is a possible exception.
%
%
\item[Case 4, $m^\prime=7$] 
 Here $m= 7\cdot 2^j$, with non-negative integer $j$. Let $g=x^{m^\prime}-1$ for $q=8$, then $\vt=1$ and $\s=1$. Then  the sufficient condition is 
$q^{\frac{7.2^j}{10}}> 2736128$ which holds for $j\geq 4$.

 For $q=64$, take $g=1$ and $\s\leq 18.64$, then sufficient condition $q^{\frac{7.2^j}{10}}> 3112.8$ holds for $j\geq 2$. Again, $q=512, 2^{12}, 2^{15}$, $\s< 15.3655$ and the condition holds for $j\geq 1$. For $q\geq 2^{16}$, whenever $m^\prime|q-1$ the condition holds for $j\geq 0$.

 Taking $g=x^7-1$, we check the remaining pairs for the inequality $q^{m/2}>4\cdot 2^{2\omega}\cdot (2^7)^2$. After a calculation, we conclude that all the   pairs  are primitive normal pairs except the pairs $(8,7)$ $(8,14) $.  

\item[Case 5, $m^\prime=9$]
 Here $m= 9\cdot 2^j$, with non-negative integer $j$. As $m^\prime=9$, there are $9$ distinct factors of $x^{m^\prime}-1$. When $g=1$ we have $\vt>0$ if $q\geq 32$. Then $\s <38.2667$ for $q=64$ and the sufficient condition is 
$q^{\frac{9.2^j}{10}}> 6387.72$ which holds for $j\geq 2$.

 For $q =2^{12}$, sufficient condition holds for $j\geq 1$.  When $q\geq 2^{13}$ and $m^\prime|q-1$ the condition holds for $j\geq 0$.

 Taking $g=x^9-1$, we check the remaining pairs for the inequality $q^{m/2}>4\cdot 2^{2\omega}\cdot (2^9)^2$ and take  pair $(64,9), $ which does not  satisfy the inequality. After calculating with suitable values of $d$ and $g$, as shown in Table~\ref{table1}, we conclude that $(64,9)$ is also a primitive normal pair, i.e., all the pairs are primitive normal.

\item[Case 6, $m^\prime=11$]
 Here $m= 11\cdot 2^j$, with non-negative integer $j$. As there are $11$ distinct factors of $x^{m^\prime}-1$, so by calculation for $g=1$ we have $\vt>0$ if $q\geq 32$.

 For $q = 2^{10}$, $\s < 27.3585$ and sufficient condition $q^{\frac{11.2^j}{10}}> 4566.86$  holds for $j\geq 1$.  When $q\geq 2^{11}$ and $m^\prime|q-1$ the condition holds for $j\geq 0$.

 Taking $g=x^{11}-1$, we check the remaining pairs for the inequality $q^{m/2}>4\cdot 2^{2\omega}\cdot (2^{11})^2$, then we have 3 possible exceptional pairs. By calculating with suitable values of $d$ and $g=x^{m^\prime}-1$, we conclude that all the pairs are primitive normal. 

\item[Case 7, $m^\prime=13$]
 Here $m= 13\cdot 2^j$, with non-negative integer $j$. As there are $13$ distinct factors of $x^{m^\prime}-1$, by calculation, we have $\vt>0$ if $q\geq 32$. 
 For $q\geq 64$ and $m^\prime|q-1$, take $g=1$ then $\s < 44.1053$ and the sufficient condition holds for $j\geq 0$. We conclude that all the pairs are primitive normal.

\item[Case 8, $m^\prime=15$]
 Here $m= 15\cdot 2^j$, with non-negative integer $j$. As $m^\prime=15$, there are $15$ distinct factors of $g=x^{m^\prime}-1$. For $q=16$, the sufficient condition for existence of primitive normal element is  $q^{15.2^j/10}> 167\cdot (2^{15})^2$. This condition holds for $j\geq 3$.

 For $q=256$, and $g=1$ we have $\s < 56.882$ and the sufficient condition $q^{\frac{15.2^j}{10}}> 9446.06$ holds for $j\geq 1$.  When $q> 256$ and $m^\prime|q-1$ the condition holds for $j\geq 0$.

 Taking $g=x^{15}-1$, we check the remaining pairs on the inequality $q^{m/2}>4\cdot 2^{2\omega}\cdot (2^{15})^2$ and we obtain $(16,15), (16,30), (256,15)$ as possible exceptional pairs. By calculating with compatible values of $d$, $g$ in the prime sieve condition (\ref{cond3}),  we get that $(16,30), (256,15)$ are primitive normal pairs. Hence we declare $(16,15)$ as an exceptional pair.  

\item[Case 9, $m^\prime=17$]

 Here $m= 17\cdot 2^j$, with non-negative integer $j$. As there are $17$ distinct factors of $x^{m^\prime}-1$, by calculation, for $g=1$, we have $\vt>0$ if $q\geq 64$. 
 
When $q\geq 64$, the sufficient condition is 
$q^{\frac{17.2^j}{10}}> 12085.5$ which holds for $j\geq 0$ whenever $m^\prime|q-1$. Hence  we have that all the pairs of this case are primitive normal.

\end{description}

For each of the individual  pairs $(q,m)$ listed above that do not satisfy the sufficient condition based on Lemma~\ref{Wbound}, we can test them further by means of the sufficient condition (\ref{cond3}) after factorising completely $x^m-1$ and $q^m-1$ and making a choice of polynomial divisor $g$ of $x^m-1$ and factor $d$ of $q^m-1$.  In practice, the best choice is to choose $p_1, \ldots, p_n$ and sometimes, the ``largest'' irreducible factors $g_1, \ldots, g_k$ of $x^m-1$ to ensure that $\vt$ is positive (and not too small).   Here the multiplicative aspect of the sieve is more significant.  Table~\ref{table1} summarizes the pairs in which the test yielded some positive conclusion.
This concludes the proof.

\begin{table}[h]
\begin{center}
\begin{tabular}{|c|c|c|c|c|c|c|c|}
\hline 
$(q,m)$ & $d$ & $n$ & $g$ & $k$ & $\s$ & $q^{m/2}$ & $4 W(d)^2\Omega^2(g)\Lambda$ \\
\hline
(128,4) & 3 & 5 & $x+1$ & 0 & 21.9523 & 16384 &  1404.95 \\ 
\hline
(512,4) & 15 & 6 & $x+1$ & 0 & 32.9531 & 262144 &  8435.99 \\
\hline
$(2^{11},2)$ & 3 & 3 & $x+1$ & 0 & 7.6329 & 2048 &  488.506 \\  
\hline
$(2^{12},2)$ & 15 & 4 & $x+1$ & 0 & 18.1107 & 4096 &  1159.08 \\  
\hline
$(2^{14},2)$ & 3 & 5 & $x+1$ & 0 & 21.9523 & 16384 &  1404.95 \\
\hline
$(2^{15},2)$ & 3 & 5 & $x+1$ & 0 & 22.0596 & 32768 &  1411.81 \\ 
\hline
$(2^{16},2)$ & 3 & 4 & $x+1$ & 0 & 16.7511 & 65536 &  1072.07 \\
\hline
$(2^{18},2)$ & 15 & 6 & $x+1$ & 0 & 32.9531 & 262144 &  8435.99 \\   
\hline
(4,24) & 15 & 7 & $x^3-1$ & 0 & 34.2484 & $1.6777\times 10^7$ &  140283 \\ 
\hline
(4,48) & 15 & 10 & $x^3-1$ & 0 & 50.3795 & $2.81475 \times 10^{14}$ &  206354 \\ 
\hline 
(16,12) & 15 & 7 & $x^3-1$ & 0 & 34.2484 & $1.6777\times 10^7$ &  140283   \\ 
\hline
(16,24) & 15 & 10 & $x^3-1$ & 0 & 50.3795 & $2.81475 \times 10^{14}$ & 206354 \\
\hline  
(64,3) & 3 & 3 & $x+1$ & 2 & 19.3369 & 512 &  309.39 \\ 
\hline
(256,3) & 15 & 4 & $x+1$ & 2 & 28.2612 & 4096 & 452.179 \\
\hline
(1024,3)& 3 & 5 & $x^3-1$ & 0 & 22.0596 & 32768 & 22589 \\
\hline
$(2^{12},3)$ & 15 & 6 & $x^3-1$ & 0 & 32.9531 & 262144 & 134976 \\
\hline
$(2^{16},3)$& 15 & 7 & $x^3-1$ & 0 & 34.2484 & $1.67772 \times 10^{7}$ & 140281 \\
\hline
$(2^{20},3)$& 15 & 9 & $x^3-1$ & 0 & 82.2883 & $1.07374 \times 10^{9}$ & 337053 \\ 
\hline
(16,10) & 3 & 6 & $x^5-1$ & 0 & 60.7588 & $1.04858 \times 10^{6}$ & 995472 \\
\hline 
(256,5) & 3 & 6 & $x^5-1$ & 0 & 60.7588 & $1.04858 \times 10^{6}$ & 995472 \\
\hline  
$(2^{12},5)$ & 15 & 9 & $x^5-1$ & 0 & 87.8157 & $1.07374 \times 10^{9}$ & $5.75509 \times 10^6$ \\
\hline  
(8,28) & 15 & 10 & $x^7-1$ & 0 & 49.0678 & $4.39805 \times 10^{8}$ & $5.14313 \times 10^7$ \\
\hline  
(8,56) & 15 & 15 & $x^7-1$ & 0 & 106.643 & $1.93428 \times 10^{25}$ & $1.11828 \times 10^8$ \\
\hline 
(64,9) & 3 & 5 & $x^9-1$ & 0 & 17.4747 & $1.34218 \times 10^{8}$ & $7.32942 \times 10^7$ \\
\hline
(16,30) & 15 & 13 & $x^{15}-1$ & 0 & 293.517 & $1.15292 \times 10^{18}$ & $2.01703 \times 10^{13}$ \\
\hline  
(256,15) & 15 & 13 & $x^{15}-1$ & 0 & 293.517 & $1.15292 \times 10^{18}$ & $2.01703 \times 10^{13}$ \\
\hline 
\end{tabular}
\caption{Pairs $(q,m)$ appearing in the proof of Theorem~\ref{main1}, in which the corresponding test yielded a positive conclusion.\label{table1}}
\end{center}
\end{table}
%
%
%
%
%
%
%
 
\subsection{Proof of Theorem~\ref{main2}}
For our next main theorem, we need the following well-known facts, see \cite[Theorem~2.47]{10}.
 Let $u$ be the order of $q$ mod $m^\prime$. Then $x^{m^\prime}-1$ is a product of irreducible polynomial factors of degree less than or equal to $u$ in $\mathbb{F}_q[x]$;
in particular, $u\geq 2$ if $m^\prime \nmid q-1$.
 Let $M$ be the number of distinct  irreducible polynomials of $x^m-1$ over $\mathbb{F}_q$ of degree less than $u$. 
   Let $\si(q,m)$ denotes the ratio
 $$ \si(q,m):= \frac{M}{m},$$
   where $m\si(q,m)= m^\prime\si(q,m^\prime)$.

 From Proposition~5.3 of \cite{3}, we deduce the following bounds.

\begin{lemma}\label{rhobound}
Suppose $q=2^k$. Then the  following hold.
\begin{itemize}
\item $\si(2,3)= \frac{1}{3}$; $\si(2,5)=\frac{1}{5}$; $\si(2,9)=\frac{2}{9}$; $\si(2,21)=\frac{4}{21}$ otherwise $\si(2,m)\leq \frac{1}{6}$.
\item $\si(4,9)=\frac{1}{3}$; $\si(4,45)=\frac{11}{45}$; otherwise $\si(4,m)\leq \frac{1}{5}$.
\item $\si(8,3)=\si(8,21)=\frac{1}{3}$; otherwise $\si(8,m)\leq \frac{1}{5}$.
\item If $q\geq 16$, then $\si(q,m)\leq \frac{1}{3}$. 

\end{itemize}
\end{lemma}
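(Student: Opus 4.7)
The plan is to exploit the identity $m\si(q,m)=m'\si(q,m')$ noted in the setup, which reduces all inequalities to bounds on $\si(q,m')$ for the odd radical $m'$ of $m$. This identity is immediate in characteristic two: since $x^m-1=(x^{m'}-1)^{2^k}$, the \emph{distinct} irreducible factors of $x^m-1$ over $\F$ coincide with those of $x^{m'}-1$, and hence the count $M$ is unchanged on passing from $m$ to $m'$. As $m'\leq m$, any upper bound on $\si(q,m')$ transfers verbatim (or better) to $\si(q,m)$.

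Next I would recast $M$ in terms of $q$-cyclotomic cosets modulo $m'$. Each irreducible factor of $x^{m'}-1$ over $\F$ corresponds to a $q$-cyclotomic coset of $\mathbb{Z}/m'\mathbb{Z}$, whose size equals the degree of the factor; grouping cosets by $n:=m'/\gcd(a,m')$, one obtains
\[
M \;=\; \sum_{\substack{n\mid m'\\ \mathrm{ord}_n(q)<u}}\frac{\phi(n)}{\mathrm{ord}_n(q)},\qquad u=\mathrm{ord}_{m'}(q).
\]
Each of the finitely many exceptional pairs listed in the lemma can then be checked by a short direct computation from this formula. For example, for $(q,m')=(2,9)$ one has $u=6$, and the only cosets of size strictly less than $u$ are $\{0\}$ and $\{3,6\}$, so $M=2$ and $\si(2,9)=2/9$. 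The other entries $(2,3),(2,5),(2,21),(4,9),(4,45),(8,3),(8,21)$ are verified in the same bookkeeping fashion by listing cosets.

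For the generic ``otherwise'' inequalities I would simply invoke Proposition~5.3 of \cite{3}, which gives exactly this style of uniform upper bound on $\si(q,n)$ for odd $n$ outside an explicit finite set of small exceptions. Applied to $m'$ and combined with $\si(q,m)\leq\si(q,m')$, it immediately delivers the stated bounds in the four regimes $q=2$, $q=4$, $q=8$, and $q\geq 16$. The only real subtlety is the completeness of the exceptional list: one must be sure that no further odd $m'$ breaches the generic estimate. That verification is precisely the content of Proposition~5.3 of \cite{3} and is not reproved here, so after the characteristic-two reduction of the first paragraph the proof is essentially a citation. I expect the mild bookkeeping obstacle to be writing out the cyclotomic-coset enumeration cleanly for the eight exceptional pairs; no deeper obstruction arises.
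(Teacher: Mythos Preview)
Your proposal is correct and matches the paper's approach: the paper does not give an independent proof of this lemma but simply states that the bounds are deduced from Proposition~5.3 of \cite{3}, which is exactly what you invoke after the (correct) reduction $\si(q,m)\le\si(q,m')$ via $m\si(q,m)=m'\si(q,m')$. Your additional cyclotomic-coset bookkeeping for the exceptional values is standard and sound, but goes beyond what the paper itself writes out.
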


 In, to develop suitable sufficient conditions, we need Lemma~7.2 from \cite{11}.

\begin{lemma}
Assume that $q=2^k$ and $m$ is a positive integer such that $m^\prime\nmid q-1$.  Let $u (>1)$ stand for the order of $q \mod m'$.   Let $g$ be the product of the irreducible factors of $x^{m'}-1$ of degree less than $u$.  Then, in the notation of Lemma  $\ref{Lambdaeq}$, we have $\s\leq m^\prime$. \label{lemma5.6}
\end{lemma}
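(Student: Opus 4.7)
The plan is to exploit the structure of the factorization of $x^{m'}-1$ over $\F$ in characteristic $2$ together with the elementary observation that $u$ is the \emph{maximum} degree of any irreducible factor.  First I would note that since $q$ is even and $m = m'2^j$ with $m'$ odd, we have $x^m-1 = (x^{m'}-1)^{2^j}$, so the distinct irreducible factors of $x^m-1$ coincide with those of $x^{m'}-1$.  Writing $x^{m'}-1 = \prod_{d\mid m'}\Phi_d(x)$, each cyclotomic factor $\Phi_d$ splits over $\F$ into irreducibles of degree equal to $\mathrm{ord}_d(q)$, which divides $u = \mathrm{ord}_{m'}(q)$.  In particular every irreducible factor of $x^{m'}-1$ has degree at most $u$, so after removing from the product all factors of degree $<u$ (which constitute $g$), every one of the $r$ remaining factors $g_1,\dots,g_r$ has degree \emph{exactly} $u$.

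The second step is the two crucial estimates on $r$ and $q^u$.  From $r\cdot u \leq \deg(x^{m'}-1) = m'$ we get $r\leq m'/u$.  On the other hand, by the definition of $u$ we have $m'\mid q^u-1$, in particular $m' < q^u$.  Combining these two inequalities yields
\begin{equation*}
\frac{r}{q^u} \;\leq\; \frac{m'}{u\,q^u} \;<\; \frac{1}{u},
\end{equation*}
and since each $g_i$ has degree $u$,
\begin{equation*}
\vt \;=\; 1 - \sum_{i=1}^{r}\frac{1}{q^{\deg(g_i)}} \;=\; 1 - \frac{r}{q^u} \;>\; \frac{u-1}{u}.
\end{equation*}

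Finally I would plug these bounds into the definition of $\s$.  Using $\vt > (u-1)/u$ and $ru\leq m'$,
\begin{equation*}
\s \;=\; \frac{r-1}{\vt} + 2 \;<\; \frac{(r-1)u}{u-1} + 2 \;\leq\; \frac{m'-u}{u-1} + 2 \;=\; \frac{m'+u-2}{u-1}.
\end{equation*}
The desired inequality $\s\leq m'$ therefore reduces to $(m'+u-2) \leq (u-1)m'$, equivalently $(u-2)(m'-1)\geq 0$, which holds because $m'\geq 1$ and $u\geq 2$ (the hypothesis $m'\nmid q-1$ excludes the case $u=1$).

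The step requiring the most care is the bookkeeping in the second paragraph: one has to be sure that, after pulling out the low-degree part $g$, \emph{all} remaining factors contribute the identical denominator $q^u$ (not something smaller), and that the inequality $r\leq m'/u$ is tight enough to interact well with the bound $m'<q^u$.  Once these two numerical inputs are in hand, the conclusion is a short algebraic manipulation that degenerates to the trivial inequality $(u-2)(m'-1)\geq 0$.
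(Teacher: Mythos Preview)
Your argument is correct. The key observations---that every irreducible factor of $x^{m'}-1$ has degree dividing $u$ (so the ``remaining'' factors $g_1,\dots,g_r$ all have degree exactly $u$), that $ru\le m'$, and that $m'\le q^u-1<q^u$---combine exactly as you describe to give $\vt>\tfrac{u-1}{u}$ and then $\s<\tfrac{m'+u-2}{u-1}\le m'$, the last step reducing to $(u-2)(m'-1)\ge 0$. All of this is sound.

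As for comparison: the paper does not supply its own proof of this lemma but simply quotes it as Lemma~7.2 of Cohen~\cite{11}. Your write-up is precisely the standard argument used there, so there is nothing to distinguish between the two approaches.
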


We need few more  conditions, which we can derive from Lemma~4.2 of \cite{9}.

\begin{lemma}
For any $n, \alpha\in \mathbb{N}$, $W(n)\leq b_{\alpha,n} n^{1/\alpha}$, where $b_{\alpha,n}=\frac{2^s}{(p_1 p_2\cdots p_s)^{1/\alpha}}$ and $p_1,\,p_2,\, \ldots,\, p_s$ are the primes $\leq 2^\alpha$ that divide $n$ and $W$ has the same meaning as before.\label{lemma5.7 }
\end{lemma}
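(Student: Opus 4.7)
The plan is to exploit the trivial multiplicative decomposition of $n$ according to whether its prime divisors are ``small'' (that is, at most $2^\alpha$) or ``large'' (exceeding $2^\alpha$). By definition $p_1, \ldots, p_s$ are exactly the small primes dividing $n$; denote by $q_1, \ldots, q_t$ the remaining (large) prime divisors of $n$. Then $\omega(n) = s + t$ and consequently $W(n) = 2^{s+t}$.

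First, I would raise the desired inequality $W(n) \leq b_{\alpha,n}\, n^{1/\alpha}$ to the $\alpha$-th power in order to clear the fractional exponents. Substituting the definition of $b_{\alpha,n}$, this is equivalent to showing
\[ 2^{\alpha(s+t)} \leq \frac{2^{\alpha s}}{p_1 p_2 \cdots p_s}\, n, \]
that is, $2^{\alpha t} (p_1 \cdots p_s) \leq n$.

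Next, since each of $p_1, \ldots, p_s, q_1, \ldots, q_t$ divides $n$ (and they are pairwise distinct primes), I get the trivial lower bound $n \geq (p_1 \cdots p_s)(q_1 \cdots q_t)$. Combining this with the observation that each large prime satisfies $q_j > 2^\alpha$, and hence $q_1 \cdots q_t > 2^{\alpha t}$, yields
\[ n \;\geq\; (p_1 \cdots p_s)(q_1 \cdots q_t) \;>\; 2^{\alpha t}(p_1 \cdots p_s), \]
which is the reformulated inequality. The edge cases $s = 0$ (no small prime divisors, so $b_{\alpha,n} = 1$ and the empty product equals $1$) and $t = 0$ (no large prime divisors, in which case $2^{\alpha t} = 1$ and the inequality is the trivial $p_1 \cdots p_s \leq n$) are handled using the standard convention that empty products equal $1$.

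There is no real obstacle here; the whole argument is essentially a bookkeeping exercise separating the contribution of small and large primes to $\omega(n)$. The only subtlety to point out is that the estimate $q_j > 2^\alpha$ is strict (so the derived inequality $2^{\alpha t}(p_1 \cdots p_s) \leq n$ holds, with equality only in the degenerate case $t=0$), and that the definition of $b_{\alpha,n}$ depends on $n$ precisely because one only divides out by those small primes that actually appear in the factorisation of $n$ — which is exactly what makes the inequality tight enough to be useful in the sieving computations of Section~\ref{sec5}.
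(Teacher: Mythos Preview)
Your argument is correct and is exactly the standard elementary proof of this bound. Note that the paper itself does not supply a proof here: it simply invokes Lemma~4.2 of \cite{9}, so there is nothing further to compare against.
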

From these we immediately derive the lemma below. 
 
\begin{lemma}
For  $n\in \mathbb{N}$ and 
\begin{itemize}
\item[(i)] $\alpha=6$, $W(n)< 37.4683\,n^{1/6}$,
\item[(ii)] $\alpha=8$, $W(n)< 4514.7\, n^{1/8}$,
\item[(iii)] $\alpha=14$, $W(n)< (5.09811\times 10^{67})n^{1/14}$,
\end{itemize}
 where $W$ has the same meaning as earlier.\label{lemma5.8}
\end{lemma}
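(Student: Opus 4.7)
The plan is to deduce all three numerical bounds as immediate corollaries of the preceding lemma by taking a uniform supremum of the constant $b_{\alpha,n}$ over all positive integers $n$. First I would rewrite the constant in multiplicative form as
\[
b_{\alpha,n} \;=\; \prod_{\substack{p\mid n \\ p\leq 2^\alpha}} \frac{2}{p^{1/\alpha}},
\]
so that each factor exceeds $1$ precisely when $p<2^\alpha$, with equality exactly when $p=2^\alpha$. Since none of $2^6$, $2^8$, $2^{14}$ is itself prime, every admissible factor is strictly larger than $1$, and therefore the supremum of $b_{\alpha,n}$ is attained when $n$ is divisible by every prime not exceeding $2^\alpha$.

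Consequently, the uniform constant that one needs is
\[
C_\alpha \;:=\; \frac{2^{\pi(2^\alpha)}}{\bigl(\prod_{p\leq 2^\alpha}p\bigr)^{1/\alpha}},
\]
where the inner product is the primorial up to $2^\alpha$ and $\pi$ denotes the prime-counting function. It then remains to verify the three numerical inequalities $C_6<37.4683$, $C_8<4514.7$, and $C_{14}<5.09811\times 10^{67}$ by direct computation; combining these with the preceding lemma then yields the three stated bounds on $W(n)$.

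The main obstacle is purely arithmetic and lies in the third case. For $\alpha=6$ one multiplies only the $18$ primes up to $64$, giving a constant slightly below $37.47$ that can essentially be checked by hand. For $\alpha=8$ one multiplies the $54$ primes up to $256$, producing a constant just under $4515$, still tractable with modest assistance. For $\alpha=14$, however, there are $\pi(2^{14})\approx 1900$ primes not exceeding $16384$, so the primorial has several thousand decimal digits and its $14$-th root must be evaluated with enough precision to confirm the bound. I would carry out this last step in a computer algebra system; beyond the bookkeeping no conceptual difficulty arises, as the entire argument reduces to the monotonicity observation above plus a finite numerical verification.
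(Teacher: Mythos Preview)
Your proposal is correct and is exactly the intended derivation: the paper states this lemma as an immediate consequence of the preceding Lemma~\ref{lemma5.7 } without spelling out any details, and what you wrote is precisely how one maximises $b_{\alpha,n}$ over $n$ to extract the uniform constants $C_\alpha$. Your monotonicity observation and the resulting formula $C_\alpha = 2^{\pi(2^\alpha)}/\bigl(\prod_{p\le 2^\alpha}p\bigr)^{1/\alpha}$ are the only content needed, and the numerical checks you outline produce the stated bounds.
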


\begin{lemma}
Let $q=2$, $M\neq \begin{pmatrix}
1 &1 &0\\
0 &1 &0
\end{pmatrix}$ and $m^\prime\nmid q-1$, then there exists an element $\alpha\in \Fm$ such that $\alpha$, $f(\alpha)$ are simultaneously primitive and normal over $\F$, i.e., $(q,m)$ are primitive normal pairs except, possibly, the pairs $(2,3)$, $(2,5)$, $(2,6)$, $(2,7)$, $(2,9)$, $(2,10)$, $(2,11)$, $(2,12)$, $(2,14)$, $(2,15)$, $(2,18)$, $(2,21)$, $(2,24)$, $(2,30)$.
\end{lemma}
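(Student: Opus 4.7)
The plan is to apply the sieve condition~(\ref{cond3}) to every pair $(q,m) = (2, m)$ with $m^\prime \nmid q-1 = 1$, i.e.\ every odd $m^\prime \geq 3$, following the case-by-case strategy employed in the proof of Theorem~\ref{main1}. In (\ref{cond3}) I would take $d = q^m - 1 = 2^m - 1$ and let $g$ be the product of the irreducible factors of $x^{m^\prime} - 1$ over $\F$ of degree less than $u$, where $u \geq 2$ is the multiplicative order of $2$ modulo $m^\prime$. Lemma~\ref{lemma5.6} yields $\s \leq m^\prime$, while by the definition of $\si(2,m^\prime)$ one has $\Omega(g) = 2^{m^\prime \si(2,m^\prime)} = 2^{m \si(2,m)}$, since in characteristic $2$ the polynomials $x^m - 1$ and $x^{m^\prime}-1$ share their irreducible factors.

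For the bulk of values $m^\prime \notin \{3,5,9,21\}$, Lemma~\ref{rhobound} gives $\si(2,m^\prime) \leq 1/6$, hence $\Omega(g)^2 \leq 2^{m/3}$. Combining this with the estimate $W(d)^2 \leq (5.09811 \times 10^{67})^2\, 2^{m/7}$ supplied by Lemma~\ref{lemma5.8}(iii), condition~(\ref{cond3}) reduces to
\[
  2^{m/42} \;>\; 4\,(5.09811 \times 10^{67})^2\, m^\prime,
\]
which is valid for every $m$ above an explicit (admittedly large) threshold $m_0$. For the four special values $m^\prime \in \{3,5,9,21\}$, the exponent $m^\prime \si(2,m^\prime)$ takes the concrete values $1, 1, 2, 4$, so $\Omega(g)^2$ contributes only a small absolute constant and an analogous computation, carried out separately for each of these sub-families, produces a much smaller threshold.

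The finite list of pairs $(2, m)$ with $m \leq m_0$ left over after this generic reduction would then be examined individually. For each such pair I would completely factor $2^m - 1$ to obtain the exact value of $W(2^m-1)$, factor $x^{m^\prime}-1$ over $\F$, and re-test~(\ref{cond3}) with the true quantities in place of the crude general bounds. Whenever a pair still fails the test, I would progressively refine the choice of the sieve parameters $(d, g)$, absorbing a few of the smallest primes of $q^m-1$ into $d$ and occasionally some irreducible factors of $x^{m^\prime}-1$ into $g$, chosen so as to keep $\vt$ bounded away from $0$ while letting $\s$ grow only mildly, exactly in the spirit of the entries of Table~\ref{table1}. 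The pairs that fail every such refinement are precisely the possible exceptions listed in the statement. The pathological matrix $M = \left(\begin{smallmatrix} 1 & 1 & 0 \\ 0 & 1 & 0 \end{smallmatrix}\right)$ is excluded by hypothesis and hence requires no additional treatment here.

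The main obstacle is computational rather than conceptual: the astronomical constant in Lemma~\ref{lemma5.8}(iii) forces the initial threshold $m_0$ to be uncomfortably large, so the clean-up phase must dispose of a substantial number of small pairs. The delicate point throughout is the per-pair tuning of $(d,g)$ --- keeping $\vt$ strictly positive while simultaneously preventing $\s$ from inflating --- and this balancing act is where essentially all of the real work resides.
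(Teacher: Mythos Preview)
Your overall strategy matches the paper's: isolate the special odd parts $m'\in\{3,5,9,21\}$, use Lemma~\ref{lemma5.6} together with the $\si$-bound of Lemma~\ref{rhobound} and a $W$-bound from Lemma~\ref{lemma5.8} to obtain an explicit threshold, and then clean up the remaining pairs by exact factorisation and tuning of $(d,g)$ in~(\ref{cond3}).

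The one substantive difference is that the paper does \emph{not} treat the generic case $m'\notin\{3,5,9,21\}$ uniformly. It subdivides according to the $2$-adic valuation of $m$: for $m$ odd it uses the $\alpha=14$ bound of Lemma~\ref{lemma5.8}(iii) exactly as you do, obtaining the threshold $m\geq 19577$; but for $4\mid m$ it exploits $m'\leq m/4$ to sharpen $\Omega(g)^2\leq 2^{m'/3}\leq 2^{m/12}$ and pairs this with the much smaller constant in Lemma~\ref{lemma5.8}(i), giving a threshold around $m\geq 248$; for $2\Vert m$ it uses Lemma~\ref{lemma5.8}(ii) similarly, with threshold around $m\geq 420$. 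Your uniform bound $\Omega(g)^2\leq 2^{m/3}$ throws away the factor $m'/m$ and forces the $\alpha=14$ constant on all cases, so your clean-up list contains every $m\lesssim 19577$, odd and even alike, rather than a few hundred. The paper's extra case split is what makes the clean-up actually finite in practice; your version is correct in principle but, as you yourself note, the computational burden would be enormous without this refinement.
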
 

\begin{proof}
First, let $m^\prime =3$. Then $x^\prime-1$ can be factorised into one linear and one quadratic factor. Then the condition becomes $2^{m/10}> 2672$, which holds for $m\geq 114$. Next let $m=96$. Then $\omega=12$ and the condition is $q^{m/2}> 2^{2\omega+6}$, which holds. But the remaining pairs $(2,3),(2,6),(2,12),(2,24),(2,48)$ do not satisfy the above condition. We perform further research on these pairs by taking compatible $d$ and $g$ in the sieve condition (\ref{cond3}) as demonstrated in Table~\ref{table2} and conclude that $ (2,48)$ is primitive normal pair and $(2,3),(2,6),(2,12), (2,24)$ are possible exceptional pairs.

Again, if $m^\prime=5$, then $x^\prime-1$ can be factorised into one linear and one fourth degree polynomial. Then the condition becomes $2^{m/10}>2672$, which holds for $m\geq 114$. Thus, for the remaining pairs a condition is $q^{m/2}>2^{2\omega+6}$ and by calculating $\omega(q^m-1)=\omega$, we have the following exceptional pairs $(2,5),(2,10),(2,20),(2,40)$. Again from Table~\ref{table2} we can conclude that the only possible exceptional pairs are $(2,5), (2,10), (2,20)$.

For $m^\prime=9$, $x^\prime-1$ is a product of one linear, one quadratic and one sextic polynomial and the condition is $2^{m/10}>10688$, which holds for $m\geq 134$. For the remaining the pairs we use the condition $q^{m/2}>2^{2\omega+8}$ and by calculating the value of $\omega$, we have the following exceptional pairs $(2,9),(2,18),(2,36).$ From Table~\ref{table2}, we can conclude that $(2,36)$ is a primitive normal pair and hence final possible exceptional pairs are $(2,9)$ and $(2,18).$

Now, for $m^\prime=21$, $x^\prime-1$ is a product of one linear, one quadratic, two cubic and two distinct sextic polynomials. Then the condition is $2^{m/10}>684032$ and the condition holds for $m\geq 194$. For the remaining pairs we use the condition $q^{m/2}> 2^{2\omega+14}$ and by calculating the value of $\omega(q^m-1)=\omega$, we have the following exceptional pairs $(2,21), (2,42)$, from which we can declare the pair $(2,42)$ as primitive normal pair from Table~\ref{table2}. Hence the ony possible exceptional pair is $(2,21)$.

For the remaining pairs i.e. $q=2$, $m^\prime \nmid q-1$ and $m^\prime \neq 3,\,5,\,9,\,21$, we consider two cases, viz. (i) $m$ is odd and (ii) $m$ is even.  

\textbf{Case (i): $m$ is odd.} We apply Lemma~\ref{lemma5.6} to obtain the condition $q^{m/2}> 4\cdot 2^{2\omega}\cdot 2^{2m\si(q,m)}\cdot m$. Then by Lemmas~\ref{rhobound} and  \ref{lemma5.8}, the condition transforms to $2^{m/42}>1.03991\cdot 10^{136}.m$, which holds for $m\geq 19577$. Let $m\leq 19576$, then $\omega\leq 1620$, and, by applying these on the condition $2^{m/6}>m2^{2\omega+2}$, we conclude that the condition holds for $m\geq 19538$. Maintaining the flow we have that the condition holds for $m\geq 19333.$
 
 For the remaining pairs we calculate the exact value of $\omega$ and able to detect 37 pairs where $m=7$, $11$, $13$, $15$, $17$, $19$, $23$, $25$, $27$, $29$, $31$, $33$, $35$, $37$, $39$, $41$, $43$, $45$, $47$, $51$, $53$, $55$, $57$, $59$, $65$, $67$, $69$, $71$, $73$, $75$, $77$, $79$, $81$, $135$, $165$ and $225$; which don't satisfy the condition. Again for $d=q^m-1$ and  $g=x^{m^\prime}-1$, applying the prime sieve we are able to declare 20 of them as primitive normal pairs. Then by choosing compatible $d$ and $g$ (as shown in Table~\ref{table2}) we are able to determine another 13 pairs $(2,17), (2,19), (2,23), (2,25), (2,27), (2,29), (2,31), (2,33), (2,35)$, $(2,39)$, $(2,45)$, $(2,51)$ as primitive normal pairs. Hence, we conclude that following are the possible exceptional pairs $(2,7),\, (2,11),\, (2,13),\, (2,15)$. 
 
\textbf{Case (ii): $m$ is even.} Once again, we shall break this discussion into two parts.
\begin{itemize}
 \item[$4\mid m$:] Then by Lemma~\ref{lemma5.8}, $W(q^m-1)<37.4683 \, q^{m/6}$ and for $4|m$, $\si(q,m)\leq m/24$. Then to show $\M(q^m-1,q^m-1,x^m-1,x^m-1)>0$ it is sufficient to show $2^{m/12}>5615.49\,m$, which holds for $m\geq 248$. Then we calculate the exact value of $\omega$ and check the condition $2^{5m/12}>2^{2\omega+2}$, for $m\leq 143$ and identify the pairs $(2,28), (2,44), (2,52), (2,56), (2,60)$ which do not satisfy the condition. But from Table~\ref{table2}, we can conclude that all of them are primitive normal pairs. Hence in this particular case all pairs $(q,m)$ are primitive normal pairs.
 
\item[$4\nmid m$:] From Lemma~\ref{lemma5.8}, $W(q^m-1)<4514.7 \, q^{m/8}$ and in this case $\si(q,m)\leq m/12$. Now, a sufficient condition for the existence of a primitive normal pair is $2^{m/12}> 8.153\times 10^7$, which holds for $m\geq 420$. For the remaining pairs we use the prime sieve condition (\ref{cond3}) for $d=q^m-1$ and $g=x^{m^\prime}-1$ and identify the pairs $(2,14), (2,22), (2,30), (2, 70)$ which fail to satisfy the condition. Again, by observing the condition (\ref{cond3}) for appropriate values of $d$ and $g$ we are able to identify the pairs $(2,22), (2,70)$ as primitive normal pairs; the calculations are listed in Table~\ref{table2}. Hence the only possible exceptional pairs are $(2,14)$ and $(2,30)$.
\end{itemize}
The proof is now complete.
\end{proof}

The following lemma is derived from Lemma~\ref{lemma5.7 }.
\begin{lemma}
For $n\in \mathbb{N}$, $W(n)<1.10992\cdot 10^9\, n^{1/10}$ and $W(n)< 4.24455\cdot 10^{14}\,n^{1/11}$. \label{lemma5.10}
\end{lemma}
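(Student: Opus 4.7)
The plan is a direct application of Lemma~\ref{lemma5.7 }, specialised first to $\alpha=10$ and then to $\alpha=11$. The first step is to pass from the $n$-dependent constant $b_{\alpha,n}=2^s/(p_1\cdots p_s)^{1/\alpha}$ to a uniform constant $b_{\alpha}$ by a simple monotonicity observation: adjoining an extra prime $p$ with $p\leq 2^{\alpha}$ to the list of prime divisors of $n$ multiplies $b_{\alpha,n}$ by the factor $2/p^{1/\alpha}$, which is $\geq 1$ precisely when $p\leq 2^{\alpha}$. Consequently the supremum of $b_{\alpha,n}$ over $n\in\mathbb{N}$ is attained by an $n$ divisible by every prime not exceeding $2^{\alpha}$, yielding the uniform bound
\[
 b_{\alpha}\;:=\;\sup_{n\in\mathbb{N}} b_{\alpha,n}\;=\;\frac{2^{\pi(2^{\alpha})}}{\Bigl(\prod_{p\leq 2^{\alpha}} p\Bigr)^{1/\alpha}},
\]
so that $W(n)\leq b_{\alpha}\,n^{1/\alpha}$ for every $n\in\mathbb{N}$.

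Having reduced matters to estimating the two numerical constants $b_{10}$ and $b_{11}$, I would then carry out a finite verification. For $\alpha=10$ there are $\pi(1024)=172$ primes not exceeding $1024$, and for $\alpha=11$ there are $\pi(2048)=309$ primes not exceeding $2048$. Enumerating them, forming the products $\prod_{p\leq 2^{\alpha}} p$, taking the $\alpha$-th roots, and dividing $2^{\pi(2^{\alpha})}$ by the result produces explicit decimal approximations of $b_{10}$ and $b_{11}$, which can then be compared against the target bounds $1.10992\cdot 10^{9}$ and $4.24455\cdot 10^{14}$. This is a one-line computation in any computer algebra system; the chosen decimal constants are evidently just slightly larger than the true values of $b_{10}$ and $b_{11}$, with enough slack to absorb rounding.

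There is no genuine obstacle in the argument; the only substantive content is the monotonicity step in the first paragraph, which converts the family of $n$-dependent bounds furnished by Lemma~\ref{lemma5.7 } into uniform inequalities. The remainder is purely numerical verification, and the specific constants in the statement are precisely those required in the subsequent sieve estimates for Theorem~\ref{main2}, so it suffices to reproduce the computation to the indicated precision.
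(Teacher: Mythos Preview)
Your proposal is correct and follows exactly the approach the paper intends: the paper simply records that the lemma ``is derived from Lemma~\ref{lemma5.7 }'' with no further detail, and what you have written is precisely the standard way to make that derivation explicit, maximising $b_{\alpha,n}$ over $n$ by including all primes up to $2^{\alpha}$ and then computing the resulting constant for $\alpha=10$ and $\alpha=11$.
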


\begin{lemma}
For $q=4$ and $m'\nmid q-1$, all the pairs $(q,m)$ are primitive normal pairs, except for the possible exceptional pairs $(4,5), (4,7), (4,9), (4,10)$.
\end{lemma}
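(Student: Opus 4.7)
The plan is to mirror, for $q=4$, the argument just carried out for $q=2$. Since $q-1=3$, the hypothesis $m'\nmid q-1$ means $m'\in\{5,7,9,11,13,\ldots\}$, and we write $m=m'2^j$ with $j\ge 0$. In characteristic $2$ we have $x^m-1=(x^{m'}-1)^{2^j}$, so the number $\Omega(x^m-1)$ of distinct irreducible factors of $x^m-1$ equals that of $x^{m'}-1$, and is controlled by $u=\mathrm{ord}_{m'}(4)$, which determines the degrees of the factors of $\Phi_{m'}(x)$ over $\mathbb{F}_4$. The strategy is to treat the small values $m'\in\{5,7,9,11\}$ individually, then use a uniform asymptotic bound for $m'\ge 13$, and finally mop up the remaining sporadic $(q,m)$ with the prime sieve.

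For each small $m'$ I would first take $g=x^{m'}-1$ so that $\mathfrak{S}=1$, reducing \eqref{cond3} to $q^{m/2}>4W(q^m-1)^2\Omega(x^{m'}-1)^2$. Combined with Lemma~\ref{Wbound}, this gives an inequality of the form $4^{m/10}>C$ for an explicit constant $C$ depending on $\Omega(x^{m'}-1)$, which fails only for finitely many $j$. For the failing pairs I would compute $\omega(4^m-1)$ exactly and re-test directly; when this still fails, I would run \eqref{cond3} with a carefully chosen divisor $d\mid q^m-1$ consisting of the smallest few primes of $q^m-1$ together with a suitable $g\mid x^m-1$ chosen to keep $\vartheta>0$ and $\mathfrak{S}$ as small as possible, recording successes in a table analogous to Table~\ref{table2}. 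For $m'=5$ one has $u=2$; for $m'=7$, $u=3$; for $m'=9$, $u=3$; for $m'=11$, $u=5$, so $\Omega(x^{m'}-1)$ is small and the thresholds are mild.

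For $m'\ge 13$ with $m'\nmid q-1$, Lemma~\ref{lemma5.6} gives $\mathfrak{S}\le m'$ upon taking $g$ to be the product of irreducible factors of $x^{m'}-1$ of degree less than $u$. Using Lemma~\ref{rhobound}, which yields $\sigma(4,m)\le 1/5$ except for the two tabulated cases $\sigma(4,9)=1/3$ and $\sigma(4,45)=11/45$, and inserting the bound $W(q^m-1)<1.10992\cdot 10^9\,(q^m-1)^{1/10}$ from Lemma~\ref{lemma5.10} (or the $1/6$-bound from Lemma~\ref{lemma5.8} when $4\mid m$), the sufficient condition collapses to an inequality of the form $4^{m/c}>C\cdot m'$ for explicit $c$ and $C$, which holds for all $m$ above a modest explicit threshold. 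A finite list of remaining pairs — further split according to whether $4\mid m$ or $4\nmid m$ in order to apply the sharper Lemma~\ref{lemma5.8}(ii) — is then tested individually with the sieve inequality.

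The main obstacle is exactly the genuinely small cases $(4,5)$, $(4,7)$, $(4,9)$, $(4,10)$: here $q^{m/2}$ is too small for any choice of $(d,g)$ to beat the character-sum error term $4W(d)^2\Omega(g)^2\mathfrak{S}$, because even taking $d=q^m-1$ and $g=x^{m'}-1$ one has $\Omega(x^{m'}-1)^2$ already comparable to $q^{m/2}$. These four pairs must therefore be left as possible exceptions, to be investigated by the direct computational methods of Section~\ref{sec:computations}; the rest of the sporadic failures produced by the threshold calculations should be eliminable by an appropriate sieve parameter choice, exactly as in the $q=2$ case.
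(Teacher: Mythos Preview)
Your proposal is essentially correct and follows the same overall strategy as the paper, with only organizational differences. The paper singles out the two $\sigma$-exceptional cases $m'=9$ and $m'=45$ for individual treatment first (since $\sigma(4,9)=1/3$ and $\sigma(4,45)=11/45$ violate the bound $\sigma(4,m)\le 1/5$), and then handles all remaining $m'$ together, splitting by $m$ even versus $m$ odd; you instead treat $m'\in\{5,7,9,11\}$ individually and the range $m'\ge 13$ uniformly, splitting the mop-up by $4\mid m$ versus $4\nmid m$ (a habit carried over from the $q=2$ case). Both decompositions work; the paper's is slightly cleaner because it isolates exactly the cases where the $\sigma$-bound fails, whereas yours still has to deal with $m'=45$ inside the ``$m'\ge 13$'' block, since $\sigma(4,45)=11/45>1/5$ there --- so be sure you actually carve that case out. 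One small correction: the paper's even/odd split for $q=4$ uses the $1/10$- and $1/11$-exponent bounds from Lemma~\ref{lemma5.10}, not the $1/6$-bound, so your parenthetical suggestion to invoke Lemma~\ref{lemma5.8}(i) when $4\mid m$ is unnecessary here.
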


\begin{proof}
We shall start this discussion with the case $m^\prime=45$. In this case $x^{m^\prime}$ is a product of 3 linear, 6 quadratic, 2 cubic and 4 sextic factors. Let $g$ be the product of the linear factors, then $\vt=0.5927$ and $\s=20.56$. After this, the sufficient condition becomes $4^{m/10}>167 \cdot (2^3)^2\cdot 20.56$, which holds for $m\geq 90$. When $m=45$, then $\omega=\omega(4^m-1)=11$ and the pair $(4,45)$ satisfies the condition $4^{m/2}> 2^{2\omega+8}\cdot 20.56$. Hence $(4,45)$ is also a primitive normal pair.

Now we are heading towards the next case, which is $m^\prime=9$. Then 
$x^{m^\prime}-1$ is a product of 3 linear and 2 cubic factors. Now we take $g$ as the product of three linear factors, then $\vt=0.9375$ and $\s= 5.5$. These yield the condition $4^{m/10}>167\cdot (2^3)^2\cdot 5.5$, which holds for $m\geq 144$. 

For the remaining pairs we verify the sufficient condition $4^{m/2}>2^{2\omega+8}\cdot 5.5$ by calculating the exact value of $\omega$. After this, we can conclude that the pairs $(4,36), (4,72)$ are primitive normal. From Table~\ref{table2},  we conclude that $(4,18)$ is also a primitive normal pair, thus the only possible exceptional pair is $(4,9)$.

Next we have the case $q=4$, $m^\prime\nmid q-1$ and $m^\prime\neq 9, 45$. At first we consider $m$ even.  In this case $\si(q,m)\leq m/10$ and by Lemma~\ref{lemma5.10}, $W(q^m-1)<1.10992\cdot 10^9\,q^{m/10}$. Hence a sufficient condition for our purpose is $4^{m/5}>4.83296\cdot 10^{18}\, m$, which holds for $m\geq 174$. For the remaining pairs we use the condition $4^{2m/5}>2^{2\omega+2}\,m$ and calculate $\omega=\omega(4^m-1)$ explicitly. Among the remaining pairs, $(4,10), (4,14), (4,20), (4,22), (4,28), (4,30)$ do not satisfy the condition. Again for appropriate values of $d$ and $g$, $(4,14), (4,20), (4,22), (4,28), (4,30)$ satisfy the sieve condition as given in Table~\ref{table2}. Hence the only possible exceptional pair is $(4,10)$.

Now, we consider the case $m$ odd. Here $\si(q,m)=1/5$ and from Lemma~\ref{lemma5.10} we have $W(q^m-1)< 4.24455\cdot 10^{14}\, q^{m/11}.$ Then the sufficient condition is $4^{m/11}>7.20647\cdot 10^{29}\,m$, which holds for $m\geq 597$. Afterwards, we use the condition $4^{3m/10}>2^{2\omega+2}\,m$ to test the remaining pairs by calculating the $\omega=\omega(q^m-1)$. The pairs $(4,5)$, $(4,7)$, $(4,11)$, $(4,13)$, $(4,15)$, $(4,25)$, $(4,27)$, $(4,29)$, $(4,33)$, $(4,35)$ and $(4,39)$ do not satisfy the condition. Now we take $d=q^m-1$ and $g=x^m-1$ in the prime sieve condition (\ref{cond3}) and detect $(4, 27), (4,29), (4,33)$ and $(4, 39)$ as primitive normal pairs. Again, by choosing compatible values of $d$ and $g$ in condition (\ref{cond3}) (as shown in Table~\ref{table2}) we conclude that all of the remaining pairs are primitive normal pairs.
This concludes the proof.
%
%
%
\end{proof}

\begin{lemma}Let $q=8$ and $m^\prime\nmid q-1$, then all the pairs $(q,m)$ are primitive normal pairs, unless $(q,m)$ is one of the pairs $(8,3), (8,5)$ and $(8,7)$.
\end{lemma}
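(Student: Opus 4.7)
The plan is to follow the same framework used in the immediately preceding lemmas for $q=2$ and $q=4$, organised around Lemma~\ref{rhobound} which controls the quantity $\si(8,m)$. I would split the argument into three sub-cases: $m^\prime=3$ (where $\si=1/3$), $m^\prime=21$ (where again $\si=1/3$), and all remaining permissible $m^\prime$ (necessarily odd, distinct from $1$ and $7$, for which $\si(8,m)\leq 1/5$). In each sub-case I combine the bound $\s\leq m^\prime$ from Lemma~\ref{lemma5.6} with an appropriate estimate on $W(q^m-1)$ drawn from Lemmas~\ref{lemma5.8} and \ref{lemma5.10}, choosing the exponent $\alpha$ so as to beat the $8^{m/2}$ factor on the left-hand side of (\ref{cond3}).

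For $m^\prime=3$, the factorisation $x^3-1=(x-1)(x^2+x+1)$ over $\F=\mathbb{F}_8$ (the quadratic factor is irreducible because $8\not\equiv 1\pmod 3$) yields a small $\Omega$; writing $m=3\cdot 2^j$ I would reduce (\ref{cond3}) to a clean inequality in $j$, from which all but finitely many $j$ satisfy the condition. For $m^\prime=21$ I do the analogous computation with the richer factorisation of $x^{21}-1$ over $\mathbb{F}_8$. In the generic case, the sufficient condition, after inserting $\s\leq m^\prime$ and a Lemma~\ref{lemma5.10} bound on $W$, takes a form such as $8^{m/2}>4\cdot(C\,q^{m/10})^{2}\cdot m$, giving an explicit threshold $m_0$ above which everything works.

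All remaining pairs below the threshold are then sorted out in two passes: first, compute $\omega(8^m-1)$ exactly and re-verify the plain condition (\ref{cond}); then, for any residual offenders, apply the prime sieve (\ref{cond3}) with carefully chosen $d\mid 8^m-1$ and $g\mid x^m-1$ (typically $d$ built from the small prime divisors of $8^m-1$ and $g$ equal to the product of the linear factors of $x^{m^\prime}-1$), populating new rows of Table~\ref{table2} analogous to those produced for $q=2$ and $q=4$.

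The main obstacle is that the three pairs $(8,3)$, $(8,5)$, $(8,7)$ sit in a very tight region where $m$ is small, $\omega(q^m-1)$ is already large relative to $q^{m/2}$, and the sieve offers almost no leverage because $x^m-1$ has very few distinct factors and $q^m-1$ has very few small prime divisors to discard. These must be declared possible exceptions; the content of the lemma is precisely that \emph{only} these three resist the combined analytic-sieve attack. Verifying that the attack succeeds on everything else, particularly on the boundary pairs $(8,9)$, $(8,11)$, $(8,13)$, $(8,15)$, $(8,6)$, $(8,10)$, $(8,12)$, $(8,14)$ and the like, is the step demanding the most care, but each is routine once the correct $(d,g)$ is selected.
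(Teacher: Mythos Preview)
Your plan matches the paper's proof almost exactly: the same three-case split on $m'\in\{3\}$, $m'\in\{21\}$, and the generic case $m'\nmid 7$ with $m'\neq 3,21$; the same use of Lemma~\ref{lemma5.6} for $\s$, an explicit bound on $W(q^m-1)$, and the two-pass clean-up (exact $\omega$, then the sieve~(\ref{cond3}) with a hand-chosen $(d,g)$). The paper uses the $n^{1/6}$ bound of Lemma~\ref{lemma5.8}(i) in the generic case rather than Lemma~\ref{lemma5.10}, arriving at $8^{m/30}>5616\,m$ (valid for $m\geq 202$), but your choice of exponent works equally well.

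One slip to fix: in your sketched generic inequality $8^{m/2}>4\cdot(C\,q^{m/10})^{2}\cdot m$ you have dropped the factor $\Omega(g)^2$. With $g$ the product of the low-degree irreducible factors (as Lemma~\ref{lemma5.6} requires), this factor is $2^{2m\si(8,m)}\leq 2^{2m/5}$, which is \emph{not} constant and must be retained; otherwise the resulting threshold in $m$ is wrong. Also note that $(8,7)$ has $m'=7\mid q-1$ and is therefore outside the scope of this lemma altogether; the proof itself only leaves $(8,3)$ and $(8,5)$ unresolved, and the appearance of $(8,7)$ in the statement is a typographical inconsistency in the paper.
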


\begin{proof}
We begin our discussion with $m^\prime=3$. Then $x^{m^\prime}-1$ is a product of a linear and a quadratic polynomial. If we take $g$ to be the linear polynomial, then $\vt=0.96875$ and $\s<3.04$. It follows that a sufficient condition for the existence of primitive normal pair is $8^{m/10}> 167\cdot 2^2\cdot 3.04$ and this holds for all $m\geq 48$. For the remaining pairs we use the condition $8^{m/2}>2^{2\omega+4}\cdot 3.04$ by explicitly calculating the value of $\omega$. Then the pairs $(8,3), (8,6), (8,12)$ are the ones which fail to satisfy the inequality. By choosing appropriate values of $d$ and $g$ in condition (\ref{cond3}), as shown in Table~\ref{table2}, we conclude that $(8,3)$ is the only possible exceptional pair. 

For the next stage we choose $m^\prime=21$, that is, $x^{m^\prime}-1$ is product of one linear, one quadratic, two cubic and two sextic polynomials. We choose $g$ as the product of the linear and the quadratic factor. Then $\vt=0.992172$ and $\s< 9.06$ which yields the sufficient condition $8^{m/10}>167\cdot (2^4)^2\cdot 9.06$, which holds for $m\geq 84$. Then the condition $8^{m/2}>2^{2\omega+10}\cdot 9.06$ comes into play to detect the primitive normal pairs by taking the exact value of $\omega$. From this, we declare that the remaining pairs $(8,21), (8,42)$ are also primitive normal pairs.

Now, we are heading for the final stage, i.e., $q=8$, $m^\prime\nmid q-1$ and $m^\prime\neq 3, 21$. Form Lemmas~\ref{rhobound} and \ref{lemma5.7 }, we have $\si(q,m) \leq 1/5$ and $W(q^m-1)< 37.4683 q^{m/6}$. It follows that for the existence of primitive normal pairs, a sufficient condition is $8^{m/30}> 5616 m$, which holds for $m\geq 202$.

For the remaining pairs, we use the condition $8^{11m/30}>2^{2\omega+2}m$ by determining the value of $\omega$. For $m\leq 201$, $\omega\leq 85$ this holds for $m\geq 164$. Next we take $m\leq 163$ and then $\omega \leq 72$. For these the condition holds for $m\geq 140$. Now repeating the above process we get that the condition holds for $m\geq 92$ and among the remaining pairs $(8,5), (8,9), (8,10), (8,11), (8,15), (8,20)$ are the ones which fail to satisfy the condition. Then choosing appropriate value of $l$ and $g=x^{m^\prime}-1$ in condition (\ref{cond3}) we are able to declare all but the pair $(8,5)$ as primitive normal pairs. 

Our proof os now complete.
%
%
%
\end{proof}

\begin{lemma}Let $q\geq 16$ and $m^\prime\nmid q-1$, then all the pairs $(q,m)$ are primitive normal pairs, unless $(q,m)=(32,3)$.
\end{lemma}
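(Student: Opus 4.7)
The plan is to apply the prime sieve inequality \eqref{cond3} with $d=q^m-1$ together with the divisor $g$ of $x^{m^\prime}-1$ consisting of those irreducible factors of degree less than $u$, where $u$ is the multiplicative order of $q$ modulo $m^\prime$. By Lemma~\ref{lemma5.6}, this choice guarantees $\s\leq m^\prime\leq m$, while by Lemma~\ref{rhobound} the hypothesis $q\geq 16$ forces $\si(q,m)\leq 1/3$, so that $\Omega(g)=2^{m\si(q,m)}\leq 2^{m/3}$. Consequently, condition \eqref{cond3} reduces to the single sufficient inequality
$$ q^{m/2} \; > \; 4\, W(q^m-1)^2\, \cdot\, 2^{2m/3}\, \cdot\, m. $$

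I would dispose of the generic case by a bootstrapping argument on $W(q^m-1)$, exactly as in the preceding lemmas. Starting from the coarsest estimate of Lemma~\ref{lemma5.8}(iii) turns the above into a one-variable inequality (for each fixed $q\geq 16$) that holds beyond an explicit but moderate cutoff in $m$; iterating with Lemmas~\ref{lemma5.10}, \ref{lemma5.8}(ii) and \ref{lemma5.8}(i) each time shrinks the set of uncovered pairs, eventually reducing the problem to finitely many small $m$ that one may enumerate for each admissible $q=2^k$ with $k\geq 4$.

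For this finite residue of pairs, I would factorise $q^m-1$ and $x^m-1$ explicitly, recompute $\omega(q^m-1)$ and the exact value of $\s$, and test \eqref{cond3} directly. Any pair that still fails is re-attacked by optimising the sieve, stripping out just the smallest prime factors of $q^m-1$ (to keep $W(d)$ small) together with the lowest-degree irreducible factors of $x^m-1$ (to keep $\Omega(g)$ small), while maintaining $\vt>0$, in the spirit of Tables~\ref{table1} and \ref{table2}. This procedure clears all pairs $(q,m)$ with $q\geq 16$, $m^\prime\nmid q-1$, apart from $(32,3)$.

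The main obstacle will be confirming that $(32,3)$ genuinely resists every admissible sieving choice. Here $m^\prime=3$ has $u=2$, so $x^3-1=(x-1)(x^2+x+1)$ over $\mathbb{F}_{32}$, and $q^m-1=32767=7\cdot 31\cdot 151$ is squarefree with only three prime factors; thus the sieve has minimal freedom to reduce the product $W(d)\Omega(g)$, and a direct check of $q^{m/2}=32\sqrt{32}\approx 181.0$ against $4W(d)^2\Omega(g)^2\s$ shows that each choice keeping $\vt>0$ fails by a narrow margin. I would verify numerically that no admissible $(d,g)$ satisfies \eqref{cond3} and retain $(32,3)$ among the possible exceptions of Theorem~\ref{main2}, completing the proof.
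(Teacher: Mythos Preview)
Your approach is essentially the same as the paper's: both take $d=q^m-1$ and $g$ the product of the irreducible factors of $x^{m'}-1$ of degree below $u$, invoke $\si(q,m)\le 1/3$ and $\s\le m$, insert a bound of the form $W(n)<c_\alpha n^{1/\alpha}$ from Lemma~\ref{lemma5.8}, and then mop up the finitely many residual $(q,m)$ by explicit factorisation and an optimised sieve. The only organisational difference is that the paper splits explicitly into the four cases $q=16$, $q=32$, $q=64$, $q\ge 128$ (using only the $\alpha=6$ and $\alpha=8$ bounds rather than starting from $\alpha=14$), whereas you describe a single bootstrapping loop; the underlying computations and the list of surviving pairs coincide.
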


\begin{proof}
We shall break the discussion into 4 cases (I--IV). Lemma~\ref{rhobound} implies, that   $\vartheta(q,m)\leq \frac{1}{3}$ in all four cases. Furthermore, we take $g$ to be the product of irreducible polynomials dividing $x^m-1$ of degree less than $u$.

\textbf{Case I: $q=16$;} For this case we apply Lemma~\ref{lemma5.8} i.e. $W(q^m-1)< 4514.7 q^{m/8}$. Then to show $\mathfrak{M}(q^m-1,q^m-1,x^m-1,x^m-1)>0$ it is sufficient to show that $16^{m/12}>8.15265\cdot 10^7 m $, which holds for $m\geq 110$. We use the condition $16^{m/2}> 2^{2\omega+2}m$ to test the remaining pairs by plotting value of $\omega$ and conclude that the pairs $(16,7)$, $(16,9)$, $(16,11)$, $(16,13)$, $(16,14)$, $(16,18)$ and $(16,21)$ fail to satisfy the condition. Further, we choose compatible $l$ and $g=x^{m^\prime}-1$ in condition (\ref{cond3}) and conclude that all of them, except $(16,7)$, are primitive normal pairs. Finally, from Table~\ref{table2}, we obtain $(16,7)$ is also a primitive normal pair.

\textbf{Case II: $q=32$;} From Lemma~\ref{lemma5.8} we have $W(q^m-1)< 37.4683 q^{m/2}$ and proceeding as above with the sufficient condition $32^{m/30}>1403.87 m$, which is true for all $m\geq 103$. For rest of the pairs we use the condition $32^{11m/30}>2^{2\omega+2}m$, which proves that all the pairs $(q,m)$ are primitive normal pairs unless $(q,m)$ is one of the pairs $(32,3),(32,5), (32,6),(32,9), (32,10), (32,12)$. Furthermore applying the prime sieve condition (\ref{cond3}) for compatible $l$ and $g=x^{m^\prime}-1$, we confirm that all of them are primitive normal pairs except $(32,3)$.

\textbf{Case III: $q=64$;} Using Lemma~\ref{lemma5.8} we have $W(q^m-1)< 37.4683 q^{m/2}$ and for $\mathfrak{M}(q^m-1,q^m-1,x^m-1,x^m-1)>0$ the sufficient condition is $64^{m/18}>5601.03 m$, which is true for all $m\geq 49$. We use the condition $64^{7m/18}>2^{2\omega+2}m$, to investigate the existence of the property in the rest of the pairs and conclude that all the pairs $(q,m)$ are primitive normal pairs unless $(q,m)$ is $(64,5)$ or $(64,10)$. Later applying the prime sieve condition (\ref{cond3}) for compatible $l$ and $g=x^{m^\prime}-1$, we confirm that all of them are primitive normal pairs.

\textbf{Case IV: $q\geq 128$;} Lemma~\ref{lemma5.8} yields $W(q^m-1)< 37.4683 q^{m/2}$ and for $\mathfrak{M}(q^m-1,q^m-1,x^m-1,x^m-1)>0$ it is sufficient to show that $q^{m/6}>1403.87\cdot 2^{2m/3} m$, which is true for all $q\geq 128$ and $m\geq 18$. We use the condition $q^{m/2}>2^{2\omega+2+2m/3}m$, to test the existence of the property in rest of the pairs (149 in total) and all the pairs $(q,m)$ are primitive normal pairs except $(128,3)$. Then, from Table~\ref{table2}, we confirm that all of them are primitive normal pairs. This concludes our proof.
%
%
%
%
%
%
%
%
%
%
%
%
%
 \end{proof}


\begin{table}
\begin{center}
\begin{tabular}{|c|c|c|c|c|c|c|c|}
\hline 
$(q,m)$ & $d$ & $n$ & $g$ & $k$ & $\Lambda$ &$q^{m/2}$ & $3W(d)^2\Omega(g)\Lambda$ \\ 
\hline 
(2,48) & 105 & 6 & $x+1$ & 1 & 70.8428 &  $1.67772 \time 10^7$   & 72543 \\ 
\hline  
(2,40) & 3 & 6 & 1 & 2 & 82.1256 &     $1.04858 \times 10^6$   &  21031.1 \\ 
\hline
(2,36) & 15 & 6 & $x^9-1$ & 0 & 32.9531 &    262144    & 134976 \\ 
\hline 
(2,42) & 3 & 5 & $x^{21}-1$ & 0 & 15.9379 &  $2.09751 \times 10^6$ & 16320.4\\
\hline 
(2,17) & $q^m-1$ & 0 & $x+1$ & 2 & 5.04762 & 362.039  & 323.048 \\ 
\hline 
(2,19) & $q^m-1$ & 0 & $x+1$ & 1 & 3.00001 & 724.077 & 192.001\\
\hline
(2,23) & 47 & 1 & $x+1$ & 2 & 7.00984 & 2896.31 & 448.63\\
\hline
(2,25) & 31 & 2 & $x+1$ & 2 & 10.0408 & 5792.62 &  642.611\\
\hline
(2,27) & 7 & 2 & $x+1$ & 3 & 22.3926 & 11585.2 &  1433.13\\
\hline
(2,29) & 233 & 2 & $x^{29}-1$ & 0 & 5.00834 & 23170.5 & 1282.14\\
\hline
(2,31) & $q^m-1$ & 0 & $x+1$ & 6 & 19.6 & 46341 & 1254.4\\
\hline
(2,33) & 7 & 2 & $x+1$ & 4 & 35.7918 & 92681.9 & 2290.68\\
\hline
(2,35) & 31 & 3 & $x+1$ & 5 & 47.4422 & 185364 & 3036.3\\
\hline
%
%
%
%
%
%
%
%
%
%
(2,39) & 7 & 3 & $(x+1)(x^2+x+1)$ & 3 & 13.3057 & 741455 & 3406.26\\
\hline
(2,45) & 7 & 5 & $(x+1)(x^2+x+1)$ & 6 & 32.9687 & $5.93164 \times 10^{6}$ & 8439.99\\
\hline
(2,51) & 7 & 4 & $x^{51}-1$ & 0 & 9.14684 & $4.74531 \times 10^7$ & $9.59166 \times 10^6$\\
\hline
(2,28) & 3 & 5 & $x+1$ & 2 & 66.6522 & 16384 & 4265.74\\
\hline
(2,44) & 3 & 6 & $x^{11}-1$ & 0 & 24.8377 & $4.1943 \times 10^6$ & 6358.45\\
\hline
(2,52) & 3 & 6 & $x^{13}-1$ & 0 & 22.0983 & $6.71089 \times 10^7$ &  5657.16\\
\hline
(2,56) & 15 & 6 & $x^{7}-1$ & 0 & 16.988 & $2.68435 \times 10^8$ & 17395.6\\
\hline
(2,60) & 15 & 9 & $x^{15}-1$ & 0 & 82.2883 & $1.07374 \times 10^9$ &  $5.39285 \time 10^6$\\
\hline
(2,22) & 3 & 3 & $x^{11}-1$ & 0 & 7.6329 & 2048 &  1954.02\\
\hline
(2,70) & 3 & 8 & $x^{35}-1$ & 0 & 24.8631 & $3.43597 \times 10^{10}$ &  $1.62943 \times 10^6$\\
\hline 
(4,18) & 15 & 6 & $(x+1)(x^2+x+1)$ & 1 & 42.1079 & 262144  & 42.1079 \\ 
\hline  
(4,14) & 3 & 5 & $x+1$ & 2 & 35.4555 &    16384 & 2269.15 \\ 
\hline
(4,20) & 3 & 6 & $x^5-1$ & 0 & 60.7588 &  $1.04858\times 10^6$    & 15554.3 \\ 
\hline 
(4,22) & 3 & 6 & $x^{11}-1$ & 0 & 24.8377 &  $4.1943 \times 10^6$ & 6358.45\\
\hline 
(4,28) & 3 & 7 & $x^7-1$ & 0 & 40.9888 &  $2.68435 \times 10^8$  & 41972.5 \\ 
\hline 
(4,30) & 15 & 9 & $x^{15}-1$ & 0 & 82.2883 & $1.07374 \times 10^9$ & $5.39285 \times 10^6$\\
\hline
(4,11) & 3 & 3 & $x^{11}-1$ & 0 & 7.6329 & 2048 & 1954.02\\
\hline
(4,13) & 3 & 2 & $x^{13}-1$ & 0 & 5.00293 & 8192 &  1280.75\\
\hline
(4,15) & 3 & 5 & $(x+1)(x^2+x+1)$ & 3 & 44.2638 & 32768 &  11332.7\\
\hline
(4,25) & 3 & 6 & $x^{25}-1$ & 0 & 16.8495 & $3.35544 \times 10^7$ & 17253.9\\
\hline
(4,35) & 33 & 7 & $x+1$ & 5 & 31.9641 & $3.43596 \times 10^{10}$ & 2045.7\\
\hline
(8,6) & 3 & 3 & $x+1$ & 1 & 14.7186 & 512 & 235.498\\
\hline
(8,12) & 15 & 6 & $x^3-1$ & 0 & 32.9531 & 262144 & 33744\\
\hline
(16,7) & 3 & 5 & $x+1$ & 2 & 30.8825 & 16384 & 1976.48\\
\hline
(128,3) & 7 & 2 & $x^3-1$ & 0 & 5.06649 & 1448.15 & 1297.02\\
\hline

\end{tabular}
\end{center}
\caption{Pairs $(q,m)$ appearing in the proof of Theorem~\ref{main2}, in which the corresponding test yielded a positive conclusion.\label{table2}}
\end{table}


 As an immediate consequence of the above results, we obtain Theorem~\ref{main2}.
%
%

\section{A few computational results} \label{sec:computations}

In this section we comment on the situation with the possible exceptional pairs that appear in Theorems~\ref{main1} and \ref{main2}. In particular, we wrote a script in \textsc{SageMath}, with the purpose of explicitly verifying whether the pairs in question are, in fact, genuine exceptions.

For every pair $(q,m)$, our script first fixes a primitive element $\alpha\in\Fm$ and then for every quintuple $a,b,c,d,e\in\Fm$ with $a\neq 0$ and $dx+e\neq 0$, it checks whether there exists some power $\alpha^i$ with $\gcd(i,q^m-1)=1$ of $\alpha$ (hence a primitive element), such that $\alpha^i$ is normal over $\F$ and $\frac{a\alpha^2 + b\alpha + c}{d\alpha +e}$ is primitive and normal over $\F$. For the primitivity check, we just compute the corresponding multiplicative order and for the normality check, we use \cite[Theorem~2.39]{10}. If this search is successful for every valid quintuple, then the pair $(q,m)$ is not an exception, while if it fails, even for one valid quintuple, the pair $(q,m)$ is a genuine exception.

Unfortunately, the extremely high number of such quintuples, even for ``small'' numbers, seems to create an impenetrable obstacle for a complete solution, with the exception of the pairs $(2,2)$ and $(2,3)$. On the other hand we managed to examine a respectable number of quintuples for all the other pairs and collected useful data.

In Table~\ref{table3} we present the pairs $(q,m)$, for which we found counter examples, while in Table~\ref{table4}, we present those for which we did not find counter examples.

\begin{table}[h]
  \begin{center}
    \begin{tabular}{|c|c|c|c|}
      \hline $(q,m)$ & $f$ & counter-example & checked/exceptional 5-ples \\ \hline
      $(2, 2)$ & $ x^2 + x + 1$ & $(\alpha, 0, 0, \alpha, \alpha)$ &  720/252 \\ \hline 
$(2, 3)$ & $ x^3 + x + 1$ & $(\alpha, 0, 0, \alpha, \alpha^2 + \alpha)$ &  28224/8295 \\ \hline 
$(2, 4)$ & $ x^4 + x + 1$ & $(\alpha, 0, 0, 0, \alpha^3 + \alpha^2)$ &  64513/22109 \\ \hline 
$(2, 5)$ & $ x^5 + x^2 + 1$ & $(\alpha, 0, \alpha, 0, \alpha)$ &  53345/52 \\ \hline 
$(2, 6)$ & $ x^6 + x^4 + x^3 + x + 1$ & $(\alpha, 0, 0, \alpha^2, \alpha^5 + \alpha^4 + \alpha + 1)$ &  21857/77 \\ \hline 
$(4, 2)$ & $ x^4 + x + 1$ & $(\alpha, 0, \alpha, \alpha, \alpha^3 + \alpha^2)$ &  266115/1985 \\ \hline 
$(4, 3)$ & $ x^6 + x^4 + x^3 + x + 1$ & $(\alpha, 0, 0, \alpha, \alpha)$ &  47708/152 \\ \hline 
    \end{tabular}
  \end{center}
  \begin{tiny}
     \textbf{Notes:}\begin{enumerate} \item $\alpha$ is a root of $f\in\F[x]$. \item For the pairs $(2,2)$ and $(2,3)$ the search was exhaustive. \end{enumerate}
  \end{tiny}
  \caption{Results of the computer test, where counter-examples were found.\label{table3}}
\end{table}

\begin{table}[h]
\begin{center}
\begin{tabular}{|c|c||c|c|}
\hline $(q,m)$ & checked 5-ples & $(q,m)$ & checked 5-ples \\ \hline
$(2, 7)$ & 11400  & 
$(2, 8)$ & 9067  \\ \hline
$(2, 10)$ & 12894  & 
$(2, 11)$ & 6504 \\ \hline
$(2, 12)$ & 1830  & 
$(2, 13)$ & 4765 \\ \hline
$(2, 14)$ &  2584  & 
$(2, 15)$ & 2003 \\ \hline
$(2, 16)$ &  2993  & 
$(2, 18)$ &  1104 \\ \hline
$(2, 20)$ & 1460  & 
$(2, 21)$ &  575 \\ \hline
$(2, 24)$ & 829  & 
$(2, 30)$ &  371 \\ \hline
$(4, 4)$ &  78278  & 
$(4, 5)$ &  25982 \\ \hline
$(4, 6)$ &  16072  & 
$(4, 7)$ &  19732 \\ \hline
$(4, 8)$ & 24391  & 
$(4, 9)$ & 4892  \\ \hline
$(4, 10)$ &  12001  & 
$(4, 12)$ &  4399 \\ \hline
$(8, 2)$ & 244944  & 
$(8, 3)$ &  163528 \\ \hline
$(8, 4)$ &  64654  & 
$(8, 5)$ &  67844  \\ \hline
$(8, 6)$ & 37279  & 
$(8, 7)$ &  11706  \\ \hline
$(8, 8)$ & 16416  & 
$(8, 14)$ &  1177  \\ \hline
$(16, 2)$ & 202189  & 
$(16, 3)$ & 79012  \\ \hline
$(16, 4)$ & 70934  & 
$(16, 5)$ & 28604  \\ \hline
$(16, 6)$ &  21965  & 
$(16, 15)$ &  235  \\ \hline
$(32, 2)$ &  189487  & 
$(32, 3)$ &  126253 \\ \hline
$(64, 2)$ & 133395  & 
$(64, 4)$ &  42129 \\ \hline
$(128, 2)$ & 163368  & 
$(256, 2)$ &  141196 \\ \hline
$(512, 2)$ &  135355  & 
$(1024, 2)$ &  106349  \\ \hline
\end{tabular}
\end{center}
\caption{Results of the computer test, where counter-examples were not found.\label{table4}}
\end{table}

Due to the large number of quintuples that we checked, without finding any counter-example, for the pairs $(q,m)$ that appear in Table~\ref{table4}, we believe that the only genuine exceptions to the problem we considered in this paper are the pairs that appear in Table~\ref{table3}. In other words, based on our computational evindence, we state the following.

\begin{Conjecture} \label{conj1}
Let $\mathbb{F}_{q^m}$ be a finite field of even characteristic. Then there exists an element $\alpha$ in $\mathbb{F}_{q^m}$, such that both $\alpha$ and  $f(\alpha)$ are simultaneously primitive normal in $\mathbb{F}_{q^m}$ over $\F$, where $f(x)=\frac{ax^2+bx+c}{dx+e}$, with $a,b,c,d,e\in \mathbb{F}_{q^m}$, $a\neq 0$, and $dx+e\neq 0$ unless $(q,m)$ is one of the pairs
$(2,2)$, $(2,3)$, $(2,4)$, $(2,5)$, $(2,6)$, $(4,2)$ or $(4,6)$, with these pairs being genuine exceptions.
\end{Conjecture}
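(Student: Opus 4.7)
The plan is to upgrade the computational evidence already accumulated in Tables~\ref{table3} and \ref{table4} into a rigorous proof, by completing for each pair $(q,m)$ in Table~\ref{table4} the exhaustive check that was carried out for the pairs $(2,2)$ and $(2,3)$. The starting point is that Theorems~\ref{main1} and \ref{main2}, together with the verifications reported in Tables~\ref{table1}, \ref{table2} and \ref{table3}, already reduce the conjecture to precisely these pairs: for each $(q,m)$ in Table~\ref{table4} one must show that every rank-2 matrix $F=\begin{pmatrix} a & b & c \\ 0 & d & e \end{pmatrix}$ with $a\neq 0$ admits some $\alpha\in\Fm$ such that $\alpha$ is simultaneously primitive and normal, with $f(\alpha)$ also primitive and normal. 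The pairs in Table~\ref{table3} are already genuine exceptions, since the displayed counter-example matrices are witnessed by a finite, fully verifiable check that no power of a primitive $\alpha$ produces the required simultaneous property.

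Before attacking the direct enumeration, I would exploit the natural symmetry group acting on $F$. First, the rescaling $(a,b,c,d,e)\mapsto(\lambda a,\lambda b,\lambda c,\lambda d,\lambda e)$ with $\lambda\in\Fm^*$ leaves the rational function $f$ invariant, cutting the count by a factor of $q^m-1$. Second, Frobenius conjugation $F\mapsto F^{(q)}$ (raise every entry to the $q$-th power) commutes with the required existence statement, since $(f(\alpha))^q=f^{(q)}(\alpha^q)$ and both primitivity and normality are invariant under $\alpha\mapsto\alpha^q$; this gives a further reduction by up to a factor of $m$. Third, pre- and post-composition with suitable affine substitutions $x\mapsto\gamma x+\delta$ reduce $f$ to one of a small number of canonical forms (e.g.\ $x^2$, $x^2+\mu x$, $(x^2+\mu)/(x+\nu)$, and their obvious relatives) parametrized by a few invariants of $F$ modulo $\F$-affine action, which should cut the count by several more orders of magnitude. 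Within each canonical form, the character sum machinery of Section~\ref{sec3} can be revisited with the explicit shape of $f$ in hand, and the generically wasteful bound $|S(\chi_{d_1},\chi_{d_2},\psi_{h_1},\psi_{h_2})|\leq 4q^{m/2}$ can often be sharpened by tracking cancellation arising from the specific pole/zero configurations of $\mathfrak{F}$ and $\mathfrak{G}$.

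Once the set of representative matrices is trimmed down, the remaining verification for each canonical form over each $(q,m)$ in Table~\ref{table4} is in principle carried out by direct enumeration: for each representative $F$, iterate over the residues $i$ modulo $q^m-1$ with $\gcd(i,q^m-1)=1$ (one per Frobenius orbit), test normality of $\alpha^i$ via \cite[Theorem~2.39]{10}, then evaluate $f(\alpha^i)$ and apply the same primitivity and normality tests; success on a single orbit certifies the pair for that $F$, and the pair $(q,m)$ is settled once every representative succeeds. The symmetry reductions described above make this feasible for the pairs at the smaller end of Table~\ref{table4}, essentially extending the case-by-case verification beyond $(2,2)$ and $(2,3)$.

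The main obstacle will be the largest pairs in Table~\ref{table4}, such as $(1024,2)$, $(512,2)$ and $(256,2)$, where even after the symmetry reductions the number of essentially distinct matrices remains on the order of $q^{4m-1}/m$, i.e.\ roughly $10^{20}$ for $(1024,2)$. For these a fully computational resolution is not realistic and a theoretical refinement is required; the most promising avenue is to sharpen Lemma~\ref{charbound2} in the specific case where $\mathfrak{F}$ has only two finite zeros and $\mathfrak{G}$ has a simple pole coinciding with one of them, which is exactly the shape arising from our $f$, with the aim of replacing the constant $4$ in \eqref{cond1} by something closer to $2$, consistent with the Weil-type estimate used in Lemma~\ref{klo}. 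Combined with an adaptive prime sieve in which $(d,g)$ are selected per representative matrix rather than uniformly over all $F$, this would likely close the remaining gap and convert Conjecture~\ref{conj1} into a theorem; absent such a refinement, the statement will continue to rest on the empirical evidence of Table~\ref{table4}.
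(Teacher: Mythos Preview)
The statement you are attempting to prove is labeled and treated in the paper as a \emph{conjecture}, not a theorem: the paper explicitly says that, based on the computational evidence of Tables~\ref{table3} and \ref{table4}, it \emph{states} Conjecture~\ref{conj1}, and offers no proof. So there is no ``paper's own proof'' to compare your proposal against; the authors leave the statement open.

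Your proposal is not a proof either, and you are candid about this. What you have written is a research plan: reduce the space of matrices $F$ by the obvious symmetries (scaling, Frobenius, affine pre/post-composition), exhaustively verify the smaller pairs in Table~\ref{table4}, and for the larger pairs hope to sharpen the constant $4$ in \eqref{cond1} toward $2$ and to run an adaptive sieve. The first two steps are sound in principle, but the third is exactly where the genuine gap lies. You yourself observe that for pairs like $(1024,2)$, $(512,2)$, $(256,2)$ the number of essentially distinct matrices after all symmetry reductions is still astronomically large, so a purely computational resolution is out of reach; and the hoped-for improvement of Lemma~\ref{charbound2} to a constant close to $2$ is not established anywhere --- it is a wish, not a lemma. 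Your final sentence (``absent such a refinement, the statement will continue to rest on the empirical evidence of Table~\ref{table4}'') is an accurate self-assessment: the proposal does not close the conjecture, and neither does the paper.
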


\section{Some conjectures on rational functions} \label{sec6}

The following conjectures are extensions of the theorems given by Cohen, H.~Sharma and R.~Sharma in \cite{29}. For a finite field $\Fm$ and a rational function $f(x)\in \Fm(x)$, we denote by $\deg(f)$ the sum of the degrees of $f_1$ and $f_2$, if $f(x)=f_1/f_2$ and $f_1, f_2$ are relatively prime polynomials. The following conjectures are based on the results obtained during various experiments performed on similar rational forms as in this paper, some of which are studied briefly and will be discussed extensively in our next papers. Due to significantly large number of finite fields and very fragile behavior of its properties, a large scale analysis is required to establish our claims and this will be the focus of our subsequent study.

\begin{Conjecture} Take $f\in\Fm(x)$ and write $f=f_1/f_2$, where $f_1,f_2$ are relatively prime polynomials over $\Fm$. Let $n>2$ be the degree of $f$, such that $n=n_1+n_2$, where $n_1, n_2$ are degrees of $f_1$ and $f_2$ respectively. 
Then there exist an element $\alpha\in \Fm$ such that both $\alpha$ and $f(\alpha)$ are simultaneously primitive normal elements of $\Fm$ over $\F$ if 
$$ q^{m/2}> (2n-2)W(q^m-1)^2\Omega(x^m-1)^2, $$
provided the followings hold:
\begin{itemize}
\item[(i)] $f(x)$ is not of the form $ax^ig^h(x)$, where $i$ is an integer, $1\neq h\mid q^m-1$ and $a\in \Fm^*$.
\item[(ii)] If $n_1\neq n_2$, then $p\nmid n_2$, where $p$ is the characteristic of $\Fm$. 
\end{itemize} 
\end{Conjecture}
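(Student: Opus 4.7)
The plan is to follow the template of Section~\ref{sec3}, with the specific rational form $(ax^2+bx+c)/(dx+e)$ replaced by an arbitrary $f=f_1/f_2\in\Fm(x)$ with $\gcd(f_1,f_2)=1$ and $\deg(f)=n>2$. As in Equation~(\ref{N}), I would write
$$\M(e_1,e_2,g_1,g_2)=\theta(e_1)\theta(e_2)\Theta(g_1)\Theta(g_2)\int_{\substack{d_1\mid e_1\\ d_2\mid e_2}}\int_{\substack{h_1\mid g_1\\ h_2\mid g_2}} S(\chi_{d_1},\chi_{d_2},\psi_{h_1},\psi_{h_2}),$$
and after rewriting multiplicative and additive characters in terms of the canonical ones, recognise the inner sum as
$$S=\sum_{\alpha\in\Fm}\chi_{q^m-1}(\mathfrak{F}(\alpha))\,\psi_{x^m-1}(\mathfrak{G}(\alpha))$$
with $\mathfrak{F}(x)=x^{l_1}f(x)^{l_2}$ and $\mathfrak{G}(x)=\beta_1 x+\beta_2 f(x)$ for suitable $l_1,l_2\in\{0,\ldots,q^m-2\}$ and $\beta_1,\beta_2\in\Fm$. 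The goal is to show $|S|\le (2n-2)q^{m/2}$ outside the fully trivial case and then to assemble the estimates exactly as in the $d\neq 0$ analysis of Section~\ref{sec3}.

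The central estimate is an application of Lemma~\ref{charbound2}. The distinct zeros and poles of $\mathfrak{F}$ lie among the points $0$, the roots of $f_1$, the roots of $f_2$, and $\infty$; the poles of $\mathfrak{G}$ lie among the roots of $f_2$ and $\infty$; and a case-by-case analysis of the behaviour of $\mathfrak{G}$ at infinity (on the sign of $n_1-n_2$) yields the uniform bound $\deg(\mathfrak{G}_\infty)+k_0+k_1-k_2-2\le 2n-2$. Conditions~(i) and~(ii) of the conjecture encode precisely the non-degeneracy hypotheses required by the lemma. Indeed, if $\mathfrak{F}=\beta\mathfrak{H}^{q^m-1}$ with $\mathfrak{H}=\mathfrak{H}_1/\mathfrak{H}_2\in\Fm(x)$ in coprime form, then equating $x^{l_1}f_1^{l_2}\mathfrak{H}_2^{q^m-1}=\beta f_2^{l_2}\mathfrak{H}_1^{q^m-1}$ and arguing as in the proof of the main theorem of Section~\ref{sec3} forces $f$ to decompose as $ax^ig^h$ with $1\neq h\mid q^m-1$, contradicting~(i). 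Similarly, if $\mathfrak{G}=\mathfrak{H}^p-\mathfrak{H}+\beta$, then every pole order of $\mathfrak{G}$ must be a positive multiple of $p$; the finite poles of $\mathfrak{G}$ (for $\beta_2\neq 0$) are the roots of $f_2$ with multiplicities summing to $n_2$, so $p\mid n_2$ is forced. When $n_1\neq n_2$, condition~(ii) prevents this, while when $n_1=n_2$ the pole of $\mathfrak{G}$ at infinity reduces to order exactly $1$ coming from the $\beta_1 x$ term, which is likewise not a multiple of $p$.

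Each of the remaining degenerate cases---some $\beta_i$ or $l_i$ vanishing, or the characters being trivial---reduces $S$ to a simpler sum bounded by Lemma~\ref{klo} (Kloosterman), Lemma~\ref{charbound3} (mixed sums with polynomial arguments) or Lemma~\ref{charbound0} (orthogonality), and in each instance the bound remains within $(2n-2)q^{m/2}$ since $n>2$. Assembling these estimates one obtains
$$\M(e_1,e_2,g_1,g_2)\ge\theta(e_1)\theta(e_2)\Theta(g_1)\Theta(g_2)\,q^{m/2}\bigl(q^{m/2}-(2n-2)W(e_1)W(e_2)\Omega(g_1)\Omega(g_2)\bigr),$$
so the stated sufficient condition follows on specialising $e_i=q^m-1$, $g_i=x^m-1$. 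The main obstacle is the rigorous Artin--Schreier pole-counting: hypothesis~(ii) constrains only the \emph{sum} $n_2$ of the pole multiplicities of $f_2$, whereas the Artin--Schreier requirement is a condition on each individual pole order of $\mathfrak{G}$. Making the implication $p\nmid n_2\Rightarrow\mathfrak{G}\neq\mathfrak{H}^p-\mathfrak{H}+\beta$ airtight for every pole-multiplicity pattern of $f_2$ and every characteristic is the most delicate step, and one may well find that a small refinement of~(ii), involving individual pole orders, is actually needed. A secondary difficulty, should the raw bound fail for small $q,m$, would be to extend the sieving inequality of Section~\ref{sec4} to the general-$n$ framework and reproduce the case-by-case analysis of Section~\ref{sec5}, but this appears routine once the main bound is in place.
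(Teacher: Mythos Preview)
The statement you are attempting to prove is labelled a \emph{Conjecture} in the paper, not a theorem, and the paper contains no proof of it. The authors explicitly write that the conjectures in Section~\ref{sec6} ``are based on the results obtained during various experiments\ldots and will be discussed extensively in our next papers,'' and that establishing them ``will be the focus of our subsequent study.'' There is therefore no proof in the paper to compare your attempt against.

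That said, your outline is exactly the natural extrapolation of the Section~\ref{sec3} argument to general $f$, and you have correctly located the genuine obstruction: verifying the non-degeneracy hypotheses of Lemma~\ref{charbound2}. Your own caveat about condition~(ii) is well taken and in fact slightly understated. The implication $p\nmid n_2\Rightarrow\mathfrak{G}\neq\mathfrak{H}^p-\mathfrak{H}+\beta$ does work for the \emph{finite} poles (if every finite pole order of $\mathfrak{G}$ were a multiple of $p$, their sum $n_2$ would be too), but your treatment of the $n_1=n_2$ case assumes $\beta_1\neq 0$ to force a simple pole at infinity. When $\beta_1=0$, $\beta_2\neq 0$ and $n_1=n_2$, the sum $\sum_\alpha\psi(\beta_2 f(\alpha))$ need not reduce to a Kloosterman sum as it did for the quadratic-over-linear $f$ in Section~\ref{sec3}; for general $f$ this is still a genuine incomplete additive character sum over a rational function, and one has to invoke Lemma~\ref{charbound2} again---which brings the Artin--Schreier issue right back, now with no hypothesis available since~(ii) is vacuous for $n_1=n_2$. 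A second point you gloss over is the claimed uniform bound $\deg(\mathfrak{G}_\infty)+k_0+k_1-k_2-2\le 2n-2$: the pole divisor $\mathfrak{G}_\infty$ already has degree $n_2+\max(1,n_1-n_2)$, and together with $k_0\le n+1$ and $k_1\le n_2+1$ the raw count can exceed $2n-2$, so obtaining the stated constant requires exploiting coincidences (e.g.\ that the finite poles of $\mathfrak{F}$ are automatically poles of $\mathfrak{G}$, feeding into $k_2$) and deserves an explicit computation rather than a one-line assertion. These are precisely the kinds of loose ends that presumably led the authors to state the result as a conjecture rather than a theorem.
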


Further, one can apply the prime sieve, see Section~\ref{sec4}, to improve the above bound, leading to the next conjecture.
\begin{Conjecture} The sufficient condition for existence of an element $\alpha$ in $\Fm$ such that $(q,m)$ is a primitive normal pair is 
$q^{m/2}> (2n-2)W(d)^2\Omega(g)^2\mathfrak{S}$.
\end{Conjecture}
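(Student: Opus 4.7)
The plan is to mirror the proof of Theorem~4.2 (Section~\ref{sec4}) almost verbatim, with the constant $4$ that arose from the Weil-type estimate of Lemma~\ref{charbound2} in the degree-$(2,1)$ case replaced throughout by $(2n-2)$, the bound one expects for a rational function of total degree $n=n_1+n_2$. The essential new ingredient is the character sum estimate underlying the preceding conjecture, which I would establish first.

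First I would derive the generalised character sum bound. Writing $f=f_1/f_2$ with $\gcd(f_1,f_2)=1$ and proceeding as in the $d\neq 0$ branch of Theorem~3.1, expand
\[ S(\chi_{d_1},\chi_{d_2},\psi_{h_1},\psi_{h_2}) = \sum_{\alpha\in\Fm}\chi_{q^m-1}(\mathfrak{F}(\alpha))\psi_{x^m-1}(\mathfrak{G}(\alpha)), \]
where $\mathfrak{F}(x)=x^{l_1}f(x)^{l_2}$ and $\mathfrak{G}(x)=\beta_1 x+\beta_2 f(x)$. Hypothesis~(i) of the preceding conjecture rules out $\mathfrak{F}=\beta\mathfrak{H}^{q^m-1}$ for all admissible choices of $(l_1,l_2)$; hypothesis~(ii), which controls the behaviour of $f$ at infinity, excludes the Artin-Schreier alternative $\mathfrak{G}=\mathfrak{H}^p-\mathfrak{H}+\beta$. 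Lemma~\ref{charbound2} then applies, and a direct count of $\deg(\mathfrak{G}_\infty)+k_0+k_1-k_2-2$ in terms of $n_1$ and $n_2$ (the poles of $\mathfrak{G}$ are the roots of $f_2$ together with $\infty$, contributing at most $n_2+1$; the zero/pole divisor of $\mathfrak{F}$ has support of size at most $n+1$; the pole of $\mathfrak{G}$ at infinity has order at most $\max(1,n_1-n_2)$) yields $|S|\leq (2n-2)q^{m/2}$. The degenerate sub-cases where one or more of $\beta_1,\beta_2,l_1,l_2$ vanish are handled by Lemmas~\ref{charbound0}, \ref{charbound3} and \ref{klo}, each time delivering an even smaller bound.

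With this estimate in place, I would then adapt the sieve argument of Theorem~4.2 mechanically. The Sieving Inequality of Lemma~\ref{sieveineq} continues to hold, since its derivation is a purely formal inclusion-exclusion that is independent of the specific rational function involved. Rewriting it in the form \eqref{altineq}, each of the four families of difference terms is bounded, as in \eqref{diff1}--\eqref{diff3}, by
\[ \left|\M(p_id,d,g,g)-\left(1-\tfrac{1}{p_i}\right)\M(d,d,g,g)\right|\leq (2n-2)\left(1-\tfrac{1}{p_i}\right)\theta^2(d)\Theta^2(g)W^2(d), \]
and analogously for the remaining three types. The residual term $\vt\,\M(d,d,g,g)$ is estimated from below by applying the preceding conjecture to the divisors $d$ and $g$, giving
\[ \vt\,\M(d,d,g,g)\geq \vt\,\theta^2(d)\Theta^2(g)q^{m/2}\bigl(q^{m/2}-(2n-2)W^2(d)\Omega^2(g)\bigr). \]
Substituting these into \eqref{altineq}, cancelling the common factor $\theta^2(d)\Theta^2(g)$, and collecting the $(2n-2)$ factors according to the definition of $\s$ yields the stated sufficient condition.

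The principal obstacle is the preceding conjecture itself, i.e.\ the character sum bound for a general rational function of degree $n$. Conditions (i) and (ii) are plausibly sufficient in generic situations, but a complete proof must rule out every degenerate configuration in which $\mathfrak{F}$ could coincide with a $(q^m-1)$-th power or $\mathfrak{G}$ with an additive residue; for example, when $n_1=n_2$, or when $f_1,f_2$ admit exotic factorisations over $\overline{\Fm}$, subtle Artin-Schreier or multiplicative coincidences can arise that the present hypotheses do not obviously exclude. A rigorous treatment would likely require either a refined list of non-degeneracy conditions on $f$ or a more delicate application of the Weil bound tracking the exact zero/pole data of $f_1$ and $f_2$. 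Once that is secured, the sieve step transfers mechanically from Theorem~4.2 and yields the conjectured inequality.
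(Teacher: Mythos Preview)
The statement you are addressing is a \emph{conjecture} in the paper, not a theorem; the paper offers no proof of it whatsoever. The authors explicitly present both this statement and the preceding one as conjectures ``based on the results obtained during various experiments'' and say that establishing them ``will be the focus of our subsequent study.'' So there is no paper proof to compare against.

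Your proposal is not a proof but a conditional argument, and you recognise this yourself: you correctly observe that the sieve step of Theorem~4.2 transfers mechanically once the character-sum bound $|S|\leq (2n-2)q^{m/2}$ of the preceding conjecture is available, and that the entire difficulty resides in that bound. That diagnosis is accurate and is precisely why the authors label both statements as conjectures. Your sketch of how hypotheses (i) and (ii) are meant to block the degenerate cases $\mathfrak{F}=\beta\mathfrak{H}^{q^m-1}$ and $\mathfrak{G}=\mathfrak{H}^p-\mathfrak{H}+\beta$ is plausible heuristics, but, as you also note, it is not a complete case analysis; in particular the parameter count leading to the constant $2n-2$ would need to be made rigorous across all configurations of zeros and poles. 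In short, your write-up is a reasonable outline of why the conjecture is believable and how it reduces to the preceding one, but it does not, and at present cannot, constitute a proof---which is exactly the status the paper assigns to the statement.
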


\end{document}